\def\defterm{\textbf}
\def\id{\text{id}}
\newcommand{\Mat}{\operatorname{M}}
\newcommand{\Pgros}{\operatorname{\mathbb{P}}}
\newcommand{\GL}{\operatorname{GL}}
\newcommand{\Ker}{\operatorname{Ker}}
\newcommand{\Vect}{\operatorname{span}}
\newcommand{\im}{\operatorname{Im}}
\newcommand{\tr}{\operatorname{tr}}
\newcommand{\rk}{\operatorname{rk}}
\newcommand{\codim}{\operatorname{codim}}
\renewcommand{\setminus}{\smallsetminus}
\def\F{\mathbb{F}}
\def\K{\mathbb{K}}
\def\N{\mathbb{N}}
\def\calC{\mathcal{C}}
\def\calD{\mathcal{D}}
\def\calF{\mathcal{F}}
\def\calG{\mathcal{G}}
\def\calJ{\mathcal{J}}
\def\calM{\mathcal{M}}
\def\calP{\mathcal{P}}
\def\calR{\mathcal{R}}
\def\calV{\mathcal{V}}
\def\calX{\mathcal{X}}
\def\calZ{\mathcal{Z}}
\def\lcro{\mathopen{[\![}}
\def\rcro{\mathclose{]\!]}}
\theoremstyle{definition}
\newtheorem{Def}{Definition}
\newtheorem{Not}[Def]{Notation}
\theoremstyle{plain}
\newtheorem{theo}{Theorem}
\newtheorem{prop}[theo]{Proposition}
\newtheorem{lemme}[theo]{Lemma}
\newtheorem{claim}{Claim}
\theoremstyle{plain}
\theoremstyle{remark}
\newtheorem{Rems}{Remarks}
\newtheorem{Rem}[Rems]{Remark}
\title{The linear preservers of non-singularity in a large space of matrices}
\author{Cl\'ement de Seguins Pazzis\footnote{Professor of Mathematics at Lyc\'ee Priv\'e Sainte-Genevi\`eve, 2, rue
de l'\'Ecole des Postes, 78029 Versailles Cedex, FRANCE.}
\footnote{e-mail address: dsp.prof@gmail.com}}
\begin{document}

\thispagestyle{plain}

\maketitle

\begin{abstract}
Let $\K$ be an arbitrary (commutative) field, and $V$ be a linear subspace of $\Mat_n(\K)$ such that $\codim V<n-1$.
Using a recent generalization of a theorem of Atkinson and Lloyd \cite{dSPclass},
we show that every linear embedding of $V$ into $\Mat_n(\K)$ which strongly preserves non-singularity
must be $M \mapsto PMQ$ or $M \mapsto PM^TQ$ for some pair $(P,Q)$ of non-singular matrices of $\Mat_n(\K)$,
unless $n=3$, $\codim V=1$ and $\K \simeq \F_2$. This generalizes
a classical theorem of Dieudonn\'e with a similar strategy of proof. Weak linear preservers are also discussed,
as well as the exceptional case of a hyperplane of $\Mat_3(\F_2)$.
\end{abstract}

\vskip 2mm
\noindent
\emph{AMS Classification :} 15A86; 15A30

\vskip 2mm
\noindent
\emph{Keywords :} linear preservers, non-singular matrices, dimension, codimension.

\section{Introduction}

\subsection{Notations and goals}

Here, $\K$ will denote an arbitrary (commutative) field and $n$ a positive integer.
By a line in a vector space, we will always mean a $1$-dimensional \emph{linear} subspace of it.

We let $\Mat_{n,p}(\K)$ denote the set of matrices with $n$ rows, $p$ columns and entries in $\K$, and
$\GL_n(\K)$ the set of non-singular matrices in the algebra $\Mat_n(\K)$ of square matrices of order $n$.
The entries of a matrix $M \in \Mat_{n,p}(\K)$ are always denoted by small letters i.e.\ $M=(m_{i,j})$.
The rank of $M \in \Mat_{n,p}(\K)$ is denoted by $\rk M$.

We denote by $\frak{sl}_n(\K)$ the linear hyperplane of $\Mat_n(\K)$ consisting of matrices with trace zero.
We make the group $\GL_n(\K) \times \GL_p(\K)$ act on the set of linear subspaces of $\Mat_{n,p}(\K)$
by
$$(P,Q).V:=P\,V\,Q^{-1}.$$
Two linear subspaces of the same orbit will be called \textbf{equivalent}
(this means that they represent, in a change of basis, the same set of linear transformations from a $p$-dimensional vector space
to an $n$-dimensional vector space). \\
For $P$ and $Q$ in $\GL_n(\K)$, we define
$$u_{P,Q} : \begin{cases}
\Mat_n(\K) & \longrightarrow \Mat_n(\K) \\
M & \longmapsto P\,M\,Q
\end{cases}\quad \text{and} \quad v_{P,Q} :
\begin{cases}
\Mat_n(\K) & \longrightarrow \Mat_n(\K) \\
M & \longmapsto P\,M^T\,Q.
\end{cases}$$
Any map of the previous kind will be called a \defterm{Frobenius automorphism}.
It will be noteworthy to remark that the set of Frobenius automorphisms is a subgroup of the general linear group of the vector space $\Mat_n(\K)$.

\vskip 3mm
One of the earliest linear preserver problems was Dieudonn\'e's determination of the linear bijections $f$ of $\Mat_n(\K)$
which satisfy $f(\GL_n(\K)) \subset \GL_n(\K)$: using the structure of linear subspaces of singular matrices of $\Mat_n(\K)$
with maximal dimension, he showed that the solutions were precisely the Frobenius automorphisms
(see the recent \cite{dSPsinglin} for a full classification of non-invertible linear preservers).
More recently, the determination of the linear preservers of non-singularity was successfully carried out in many other contexts
(e.g.\ Banach spaces \cite{JafSour}, spaces of triangular matrices \cite{ChoiiLim}, spaces of symmetric matrices \cite{BeasleyLowey}).

Here, we wish to extend Dieudonn\'e's theorem to linear subspaces of $\Mat_n(\K)$ with a small codimension.
This question arose when we observed that a linear subspace of $\Mat_n(\K)$ is automatically spanned by its
non-singular elements provided its codimension is small enough (see Corollary 6 in \cite{dSPclass}).

\vskip 2mm
More precisely, we will prove the following results:

\begin{theo}\label{strong}
Let $V$ be a linear subspace of $\Mat_n(\K)$ such that $\codim V<n-1$.
Let $f : V \hookrightarrow \Mat_n(\K)$ be a linear embedding such that
$$\forall M \in V, \; f(M) \in \GL_n(\K) \Leftrightarrow M \in \GL_n(\K).$$
Then $f$ extends to a Frobenius automorphism of $\Mat_n(\K)$ unless
$n=3$, $\codim V=1$ and $\K \simeq \F_2$.
\end{theo}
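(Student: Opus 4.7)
The plan is to adapt Dieudonn\'e's classical strategy: identify the natural ``maximal'' subspaces of singular matrices inside $V$, use the linear embedding $f$ to transport them, and exploit the resulting projective data to show that $f$ must be a Frobenius automorphism. For each non-zero $u \in \K^n$, I would consider
\[
V_u := V \cap \{M \in \Mat_n(\K) : Mu = 0\}\quad\text{and}\quad V^v := V \cap \{M \in \Mat_n(\K) : v^T M = 0\};
\]
the assumption $\codim V < n-1$ forces each of these to have codimension strictly less than $2n-1$ in $\Mat_n(\K)$, so they are ``large'' subspaces of singular matrices in $V$. Since $f$ strongly preserves non-singularity, the image $f(V_u)$ is a subspace of singular matrices of the same dimension, sitting inside $f(V) \subset \Mat_n(\K)$.

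The crucial step is to apply the generalization of the Atkinson--Lloyd theorem from \cite{dSPclass}: any linear subspace of $\Mat_n(\K)$ consisting of singular matrices whose codimension is sufficiently small must be a ``compression space,'' that is, contained in some $\{M : Mu'=0\}$ or some $\{M : v'^T M = 0\}$ for a non-zero vector. Applied to $f(V_u)$, this associates to the line $\K u$ either a line $\K u'$ (``column-kernel case'') or a line $\K v'$ (``row-kernel case'') in $\K^n$. A connectivity argument, based on how $\dim(V_u \cap V_{u'})$ and $\dim(V_u \cap V^{v'})$ vary with the pair of lines, shows that the same case must occur uniformly over all $u$, possibly after composing $f$ with the transposition. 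Assume the column-kernel case and write $f(V_u) \subset \{M : M\varphi(u) = 0\}$.

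Lattice-theoretic properties of the collection $(V_u)_u$ then show that $\varphi$ is a collineation of $\Pgros(\K^n)$, and the fundamental theorem of projective geometry, combined with the $\K$-linearity of $f$, promotes it to a projectivity induced by some $Q \in \GL_n(\K)$. A symmetric argument on the right-kernel subspaces $V^v$ yields a matrix $P \in \GL_n(\K)$, and after replacing $f$ by $M \mapsto P^{-1} f(M) Q^{-1}$ one reduces to the situation where $f$ preserves every $V_u$ and every $V^v$. A direct calculation then forces this normalized map to act by a non-zero scalar (which is absorbed into $P$ and $Q$), giving the desired Frobenius form. The main obstacle is the Atkinson--Lloyd step: the classification of large singular subspaces admits additional exotic examples when both $n$ and $|\K|$ are small, and it is precisely these exotica that produce the exceptional case $n=3$, $\codim V = 1$, $\K\simeq\F_2$, where the correspondence $\K u \mapsto \K\varphi(u)$ fails to be a well-defined projectivity and the rigidity genuinely breaks down.
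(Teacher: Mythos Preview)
Your overall strategy matches the paper's: use Atkinson--Lloyd on the large singular subspaces $f(V_u)$ (the paper works with $f^{-1}(\calM_D)$ instead, which is equivalent), establish a uniform column/row dichotomy, invoke the fundamental theorem of projective geometry, and normalize. But there is a genuine gap in your identification of ``the main obstacle.''

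The generalized Atkinson--Lloyd theorem does \emph{not} say that a singular subspace of codimension $\le 2n-2$ is automatically a compression space. It has a third alternative: when the codimension is exactly $2n-2$, the subspace may instead be equivalent to $\calR(n-2,1)$ or $\calR(1,n-2)$, and this can happen over \emph{any} field, for \emph{any} $n\ge 3$. Since $\codim V$ can equal $n-2$, the subspaces $f(V_u)$ can have codimension exactly $2n-2$ in $\Mat_n(\K)$, so this alternative is live and must be excluded before your map $\varphi$ is even well-defined. The paper devotes its entire Section~3 (Lemma~4, described as ``very technical'' and ``tedious'') to ruling out this possibility, and the argument is far from a routine connectivity or dimension count. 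Your closing paragraph misattributes the difficulty: the $n=3$, $\K\simeq\F_2$ exception comes from the \emph{separate} exotic subspace $\calJ_3(\F_2)$, not from the $\calR$-type spaces, and the latter must be handled for all fields.

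Two smaller points. First, the fundamental theorem of projective geometry gives you only a \emph{semi}-linear $\varphi$; upgrading to linear requires a real argument exploiting the $\K$-linearity of $f$ on suitable rank-one matrices in $V$ (the paper's Claim~6), which is more than a one-line remark. Second, once you have reduced to $f$ preserving every $V^v$ (equivalently, preserving images), the paper does not finish by a ``direct calculation'' on a proper subspace $V$ but by invoking the Representation Lemma from \cite{dSPclass} to produce $Q$ with $f(M)=MQ$; your proposed endgame of simultaneously normalizing both sides and reading off a scalar would need justification on a subspace that need not contain all elementary matrices.
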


The above theorem would normally be called a \emph{strong} linear preserver theorem. We will also prove the following two
theorems, which are more in tune with what the reader is used to (i.e.\ \emph{weak} linear preservers):

\begin{theo}\label{weakgeneral}
Let $V$ be a linear subspace of $\Mat_n(\K)$ such that $\codim V<n-1$.
Let $f : V \rightarrow V$ be a linear bijection such that $f\bigl(V \cap \GL_n(\K)\bigr) \subset \GL_n(\K)$.
Then $f$ extends to a Frobenius automorphism of $\Mat_n(\K)$ unless
$n=3$, $\codim V=1$ and $\K \simeq \F_2$.
\end{theo}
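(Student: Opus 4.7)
The plan is to deduce Theorem \ref{weakgeneral} directly from Theorem \ref{strong}. View $f$ as an embedding $V \hookrightarrow \Mat_n(\K)$: the hypothesis is the forward implication $M \in \GL_n(\K) \Rightarrow f(M) \in \GL_n(\K)$, so it suffices to establish the converse and then invoke Theorem \ref{strong}, which will also yield the exceptional case $n = 3$, $\codim V = 1$, $\K \simeq \F_2$ with no additional work. By \cite{dSPclass}, Corollary 6, the space $V$ is spanned by its non-singular matrices, so in particular $V \cap \GL_n(\K) \neq \emptyset$ and $\det|_V$ is a nonzero polynomial of degree $n$ on $V$.

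If $\K$ is finite, the reduction is immediate: $V \cap \GL_n(\K)$ is a finite set, and the injectivity of $f$ combined with the inclusion $f\bigl(V \cap \GL_n(\K)\bigr) \subseteq V \cap \GL_n(\K)$ forces $f$ to restrict to a permutation of this set. Hence $f^{-1}$ also preserves non-singularity, which gives the desired converse.

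If $\K$ is infinite, I would work geometrically over $\overline{\K}$. The singular locus $V_s := V \setminus \GL_n(\K)$ is the zero set of $\det|_V$ and therefore a hypersurface of pure codimension $1$ in $V$. The preserver hypothesis rewrites as $f^{-1}(V_s) \subseteq V_s$, equivalently $f(V_s) \supseteq V_s$. Since $f$ is a linear automorphism of $V$, it maps the finitely many irreducible components of $V_s$ over $\overline{\K}$ (all of dimension $\dim V - 1$) onto irreducible subvarieties of $V$ of the same dimension. The inclusion $V_s \subseteq f(V_s)$ together with a dimension count on these components shows that each component of $V_s$ must coincide with the image of some component of $V_s$ under $f$; a cardinality argument on the finite set of components then forces $f$ to permute them, so $f(V_s) = V_s$. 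This yields the converse implication, and Theorem \ref{strong} finishes the proof.

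The main technical obstacle is the passage in the infinite-field case from the set-theoretic hypothesis on $\K$-points to the corresponding statement on $\overline{\K}$-points needed for the component analysis. This is handled by showing that the $\K$-rational points of $V_s$ are Zariski-dense in its $\overline{\K}$-points: the small-codimension hypothesis ensures, via \cite{dSPclass}, an abundance of $\K$-rational matrices of rank $n-1$ in $V$, which lie in the smooth locus of the determinantal hypersurface and hence generate each component geometrically.
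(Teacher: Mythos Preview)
Your finite-field reduction matches the paper exactly. For infinite $\K$, the paper takes a purely algebraic route, and your geometric argument has a genuine gap at the density step.

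The problem with your sketch is twofold. First, a rank-$(n-1)$ matrix $M\in V$ is smooth in the full determinantal hypersurface $\{\det=0\}\subset\Mat_n$, but need not be smooth in $V_s$: the differential of $\det|_V$ at $M$ is the linear form $N\mapsto\tr(\widetilde{M}\,N)$ restricted to $V$ (where $\widetilde{M}$ is the adjugate), and this vanishes exactly when $\widetilde{M}\in V^\perp$, which can certainly occur. Second, and more fundamentally, the phrase ``generate each component geometrically'' hides the real difficulty. If $\det|_V$ had several distinct Galois-conjugate irreducible factors over $\overline{\K}$, then by Galois-invariance every $\K$-point of $V_s$ would lie on \emph{all} geometric components, hence in their pairwise intersections, hence in a proper closed subset of each component; so $V_s(\K)$ would fail to be Zariski-dense in any component, and in fact $V_s$ would have no smooth $\K$-point at all. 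Thus establishing the density you need is essentially equivalent to proving geometric irreducibility of $\det|_V$, which you have not addressed; and even granting geometric irreducibility, Zariski-density of $\K$-points in an irreducible hypersurface over an arbitrary infinite field is not automatic without further input (rationality, unirationality, or an explicit parametrisation of $V_s$).

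The paper avoids $\overline{\K}$ altogether. It proves by induction on $n$ that $\det|_V$ is irreducible over $\K$ (Proposition~\ref{detirr}), and then, by a one-variable Euclidean-division argument, that any polynomial on $V$ vanishing at every singular $\K$-point is divisible by $\det|_V$ (Proposition~\ref{detdivides}). Applying Proposition~\ref{detdivides} to $\det\circ f^{-1}$ on $f(V)$ and comparing degrees yields $\det\circ f^{-1}=\lambda\,\det|_{f(V)}$ for some nonzero scalar $\lambda$, hence strong preservation, and Theorem~\ref{strong} finishes the proof.
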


\begin{theo}\label{weakKinfinite}
Assume $\K$ is infinite.
Let $V$ be a linear subspace of $\Mat_n(\K)$ such that $\codim V<n-1$,
and $f : V \hookrightarrow \Mat_n(\K)$ be a linear embedding such that $f\bigl(V \cap \GL_n(\K)\bigr) \subset \GL_n(\K)$.
Then $f$ extends to a Frobenius automorphism of $\Mat_n(\K)$.
\end{theo}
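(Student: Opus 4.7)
The plan is to reduce Theorem \ref{weakKinfinite} to Theorem \ref{strong} by showing that, under the infinite-field hypothesis, weak preservation of non-singularity automatically upgrades to strong preservation. Since $\K$ is infinite (and hence not isomorphic to $\F_2$), Theorem \ref{strong} then applies without its exceptional case and yields the claimed conclusion.

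To establish strong preservation, I introduce the two homogeneous polynomial functions of degree~$n$ on $V$:
\[
\Delta(M) := \det M, \qquad \widetilde{\Delta}(M) := \det f(M).
\]
By Corollary~6 of \cite{dSPclass}, $V$ is spanned by its non-singular elements, so $\Delta \not\equiv 0$; weak preservation then forces $\widetilde{\Delta} \not\equiv 0$ as well. In polynomial terms, weak preservation asserts that $\widetilde{\Delta}(M) = 0 \Longrightarrow \Delta(M) = 0$ for every $M\in V$. My goal is to upgrade this to the stronger polynomial identity $\widetilde{\Delta} = c\,\Delta$ in $\K[V]$ for some $c\in\K^*$, which immediately entails strong preservation.

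The strategy then has two main ingredients. First, since $\K$ is infinite, the $\K$-points of the hypersurface $\{\widetilde{\Delta}=0\}$ are Zariski-dense in its $\bar\K$-points, which allows me to lift the above pointwise vanishing implication from $V(\K)$ to $V(\bar\K)$; the Hilbert Nullstellensatz applied in $\bar\K[V]$ then yields $\widetilde{\Delta}\mid \Delta^k$ for some positive integer $k$. Second, I would exploit the \emph{geometric} irreducibility of $\Delta=\det|_V$, which should hold under the hypothesis $\codim V < n-1$: combined with the divisibility $\widetilde{\Delta}\mid \Delta^k$ and the degree equality $\deg\widetilde{\Delta}=n=\deg\Delta$, irreducibility forces $\widetilde{\Delta}=c\,\Delta$ for some $c\in\bar\K^*$. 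That $c$ actually lies in $\K^*$ is then immediate from the fact that both polynomials are defined over $\K$ and $\widetilde{\Delta}\not\equiv 0$.

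The main obstacle is precisely the geometric irreducibility of $\det|_V$ for every linear subspace $V$ of $\Mat_n(\K)$ with $\codim V < n-1$. This is a delicate Bertini-type statement: one must show that intersecting the irreducible hypersurface of singular matrices with the ``large'' subspace $V$ keeps it geometrically irreducible, and its proof will probably route through a refinement of the structural analysis of subspaces of singular matrices carried out in \cite{dSPclass}. A secondary technical point is the density lifting of the $\K$-pointwise vanishing to the $\bar\K$-level, which is exactly where the infiniteness hypothesis on $\K$ is used.
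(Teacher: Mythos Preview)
Your overall strategy is the same as the paper's: reduce Theorem~\ref{weakKinfinite} to Theorem~\ref{strong} by proving that $\det f(M)=\lambda\det M$ for some $\lambda\in\K^*$, the key input being the irreducibility of $\det|_V$. The paper packages this as Proposition~\ref{weakimpliesstrong}, resting on Proposition~\ref{detirr} (irreducibility) and Proposition~\ref{detdivides} (vanishing on the singular locus implies divisibility by $\det|_V$). The irreducibility is proved by a short induction on $n$: after arranging $E_{1,1}\in V$ (Lemma~\ref{lemme13}), one expands along the first column, applies the inductive hypothesis to the $(n-1)\times(n-1)$ minor, and uses Dieudonn\'e's theorem to exclude a linear factor. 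Since the argument is field-independent, it also yields the geometric irreducibility you need.

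Where your route genuinely differs is the ``vanishing $\Rightarrow$ divisibility'' step. You pass to $\bar\K$, invoke density of the $\K$-points of $\{\widetilde\Delta=0\}$ in its $\bar\K$-points, and then apply the Nullstellensatz. The density assertion, however, is \emph{not} a general fact about hypersurfaces over infinite fields (think of $x^2+y^2+1=0$ over $\R$); in the present determinantal situation it can be justified, e.g.\ via the incidence variety $\{([v],M):Mv=0\}\subset\Pgros^{n-1}\times W$, which fibres linearly over $\Pgros^{n-1}$ and hence has dense $\K$-points when $\K$ is infinite, but this is a genuine extra argument that your proposal glosses over. The paper sidesteps the issue entirely with an elementary device (Proposition~\ref{detdivides}): writing $V=\K E_{1,1}\oplus V''$ and $\det(xE_{1,1}+M)=x\det K(M)+q(M)$, a Euclidean division in the variable $x$ produces a remainder $s(M)$ independent of $x$; specializing $x$ so that $\det(xE_{1,1}+M)=0$ whenever $\det K(M)\neq 0$ forces $s=0$, whence $\det|_V$ divides $(\det K(M))^N\,p$, and a degree comparison using Proposition~\ref{detirr} finishes. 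This stays over $\K$ throughout and requires no algebraic geometry beyond unique factorization.

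In short: your plan is sound and essentially parallel to the paper's, but your Nullstellensatz route trades an elementary one-variable division for a density statement that, while true here, is not the triviality you present it as.
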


Whether the last theorem still holds for finite fields remains an exciting open problem.

\vskip 2mm
Before proving those results, we wish to show that the upper bound $n-1$ is tight
provided $n \geq 3$ (the case $n=2$ and $\codim V=1$ will be dealt with in Section \ref{hyper}).
Consider indeed the subspace
$$H_n:=\biggl\{
\begin{bmatrix}
M & C \\
0 & a
\end{bmatrix} \mid (M,C,a) \in \Mat_{n-1}(\K) \times \Mat_{n-1,1}(\K) \times \K \biggr\}$$
and the linear bijection:
$$\Phi : \begin{bmatrix}
M & C \\
0 & a
\end{bmatrix} \mapsto \begin{bmatrix}
M & C+m_{2,2}.e_1 \\
0 & a
\end{bmatrix}$$
where $e_1:=\begin{bmatrix}
1 &
0 &
\cdots &
0
\end{bmatrix}^T$ (and of course $m_{2,2}$ is $M$'s entry at the $(2,2)$ spot). Since the matrix $\begin{bmatrix}
M & C \\
0 & a
\end{bmatrix}$ is non-singular if and only if $M$ is non-singular and $a \neq 0$,
it follows that $\Phi$ is a strong preserver of non-singularity.
However, $\Phi$ does not extend to a Frobenius automorphism of $\Mat_n(\K)$ since it is not a rank preserver:
indeed, taking $M=E_{2,2}$ (the matrix with entry $1$ at the spot $(2,2)$, and zero entries elsewhere), one has
$\rk \begin{bmatrix}
M & 0 \\
0 & 0
\end{bmatrix}=1$ whereas $\rk \Phi \begin{bmatrix}
M & 0 \\
0 & 0
\end{bmatrix}=2$.

\subsection{Strategy of proof and structure of the article}

Our strategy for the proof of Theorem \ref{strong} is essentially similar to that of Dieudonn\'e \cite{Dieudonne}:
given a linear embedding $f : V \hookrightarrow \Mat_n(\K)$ which strongly preserves non-singularity, we study
the preimages of subspaces of singular matrices of $\Mat_n(\K)$ with maximal dimension. To understand the structure of those preimages,
we will use our recent generalization \cite{dSPclass} of a theorem of Atkinson and Lloyd \cite{AtkLloyd}.
From there, we will show (leaving aside a technical problem in the case $\codim V=n-2$, which
will be tackled in Section \ref{techlemma}) that the situation may be reduced to the one where $f$
preserves the image of any matrix of $V$.
We will then use the so-called representation lemma of \cite{dSPclass} (Theorem 8) to show that this property forces $f$
to have the form $M \mapsto M\,Q$ for some $Q \in \GL_n(\K)$, which will conclude the proof.
In Section \ref{weakproof}, we will derive Theorems \ref{weakgeneral}
and \ref{weakKinfinite} from Theorem \ref{strong}: this is trivial in the case of a finite field,
and will involve considerations of polynomials in the case $\K$ is infinite (we will prove that
every polynomial on $V$ which vanishes on its singular elements must be a multiple of the determinant restricted to $V$:
this will show that the weak preservation of non-singularity implies the strong one for a one-to-one linear map).

\vskip 2mm
The remaining two sections will be devoted to the inspection of special cases:
\begin{itemize}
\item In Section \ref{F2}, we will show that there is a linear hyperplane $V$ of $\Mat_3(\F_2)$ and an embedding
which do not satisfy the conclusion of Theorem \ref{weakgeneral}, and we will also determine
which linear hyperplanes of $\Mat_3(\F_2)$ do satisfy this conclusion for any embedding. Naturally, this is related to the special
case in the generalized Atkinson-Lloyd theorem, see Theorem 2 of \cite{dSPclass}.
\item In Section \ref{hyper}, we will show that the conclusions of Theorems \ref{strong} to \ref{weakKinfinite} still hold
in the case $n=2$ and $V$ is a linear hyperplane of $\Mat_2(\K)$. This is interesting because
it shows that the result holds for linear hyperplanes regardless of $n$, e.g. for $\frak{sl}_n(\K)$
(in that case, even if $\K \simeq \F_2$, see Section \ref{F2}).
\end{itemize}

\section{Preimage of large singular subspaces}\label{strongstart}

\subsection{A review of large subspaces of singular matrices}

\begin{Def}
A linear subspace $V$ of $\Mat_n(\K)$ is called \textbf{singular} when all its matrices are singular.
It is said to have rank $k$ when $k=\max\bigl\{\rk M \mid M \in V\}$.
\end{Def}

\begin{Not}
We set $E:=\K^n$ and let $\Pgros(E)$ denote the projective space associated to $E$, i.e.\
the set of lines in $E$.
We equip $E$ with the non-degenerate symmetric bilinear form $(X,Y) \mapsto X^T\,Y$.
Given $D \in \Pgros(E)$, the linear hyperplane $D^\bot=\{X \in \K^n : \; X^T D=0\}$ is the annihilator of $D$, and we set
$$\calM_D:=\bigl\{M \in \Mat_n(\K) : \; D \subset \Ker M\bigr\} \quad \text{and} \quad
\calM^D:=\bigl\{M \in \Mat_n(\K) : \; \im M \subset D^\bot\bigr\}.$$
\end{Not}

\begin{Rem}
Notice that $\calM_D^T=\calM^D$ and\footnote{For $V \subset \Mat_n(\K)$, we write $V^T:=\{M^T\mid M \in V\}$.} $(\calM^D)^T=\calM_D$, and that $\calM_D$ and $\calM^D$
are singular subspaces of $\Mat_n(\K)$ with codimension $n$. Classically (see \cite{Dieudonne}, or prove it directly),
these are maximal singular subspaces of $\Mat_n(\K)$ (i.e.\ maximal in the set of the singular subspaces of $\Mat_n(\K)$, ordered by
inclusion).
\end{Rem}

\begin{Not}
Let $(s,t)\in \lcro 0,n\rcro \times \lcro 0,p\rcro$.
Set then
$$\calR(s,t):=\biggl\{\begin{bmatrix}
M & N \\
P & 0
\end{bmatrix} \mid M \in \Mat_{s,t}(\K), \; N \in \Mat_{s,p-t}(\K), P \in \Mat_{n-s,t}(\K)\biggr\} \subset \Mat_{n,p}(\K)$$
(notice that we understate $n$ and $p$ in this notation; however, no confusion should arise when we use it).
\end{Not}

With the above notations, we may reformulate a theorem of Atkinson and Lloyd \cite{AtkLloyd} recently generalized in
\cite{dSPclass} to an arbitrary field:

\begin{theo}\label{atklloydtheorem}
Let $V$ be a singular subspace of $\Mat_n(\K)$ such that $\codim V \leq 2n-2$.
Then one and only one of the following three conditions holds, unless $n=3$, $\codim V=1$ and $\# \K=2$:
\begin{enumerate}[(i)]
\item $V \subset \calM_D$ for a unique $D \in \Pgros(E)$;
\item $V \subset \calM^D$ for a unique $D \in \Pgros(E)$;
\item $\codim V=2n-2$ and $V$ is equivalent to $\calR(n-2,1)$ or to $\calR(1,n-2)$.
\end{enumerate}
\end{theo}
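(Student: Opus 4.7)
The plan is to prove Theorem~\ref{atklloydtheorem} by induction on $n$, with the small cases checked by direct inspection; for $n=2$ the hypothesis $\codim V\le 2n-2=2$ forces $\dim V=2$, and the only $2$-dimensional singular subspaces of $\Mat_2(\K)$ are the $\calM_D$ and $\calM^D$ (which moreover coincide with the $\calR(0,1)$ and $\calR(1,0)$ of case (iii)).

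For the inductive step I would first locate a matrix of rank $n-1$ inside $V$. The hypothesis gives $\dim V\ge (n-1)^2+1$, whereas a Flanders-type bound shows that any subspace of $\Mat_n(\K)$ in which every matrix has rank at most $n-2$ has dimension at most $n(n-2)=n^2-2n<(n-1)^2+1$; over a general (possibly finite) field this step uses the generalization of Flanders's theorem proved in \cite{dSPclass}. Using the $\GL_n(\K)\times\GL_n(\K)$ action I would normalize such a rank-$(n-1)$ matrix $A$ to the one whose upper-left $(n-1)\times(n-1)$ block is $I_{n-1}$ and whose other blocks vanish, and write each $M\in V$ in matching block form with upper-left $M_1\in\Mat_{n-1}(\K)$, last column $C$, last row $L$, and bottom-right entry $a$. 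The singularity of the pencil $A+tM$ for every $t\in\K$ translates, via a Schur-complement expansion of $\det(A+tM)$, into $a=0$ together with the vanishing of $L\,(I_{n-1}+tM_1)^{-1}C$ for every $t$; Taylor-expanding in $t$ (after an extension of scalars if $\K$ is finite) then produces the tower of identities $L\,M_1^k\,C=0$ for all $k\ge 0$.

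Reading these identities across all $M\in V$ should yield the required trichotomy: either the last column of every element of $V$ vanishes, placing $V\subset\calM_D$ for $D=\langle e_n\rangle$ (case (i)); or the last row vanishes on all of $V$ (case (ii)); or both $L$ and $C$ vary non-trivially, in which case a dimension count combined with the induction hypothesis applied to the image of $V$ under projection onto the $M_1$-block forces $\codim V=2n-2$ and $V$ equivalent to $\calR(n-2,1)$ or $\calR(1,n-2)$ (case (iii)). The main obstacle is the exceptional case $n=3$, $\codim V=1$, $\K\simeq\F_2$: the polynomial $\det(A+tM)$ has degree at most $3$ in $t$, and over $\F_2$ such a polynomial can vanish at both points without vanishing identically, so the Taylor-expansion step collapses and genuine new singular hyperplanes of $\Mat_3(\F_2)$ emerge which fall outside the three standard families. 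These exceptions must be produced by hand and analysed directly, and one then has to verify that no analogous obstruction survives when either $|\K|$ or $n$ is enlarged.
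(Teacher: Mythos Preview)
The paper does not prove Theorem~\ref{atklloydtheorem}: it is quoted as an external result, attributed to Atkinson--Lloyd \cite{AtkLloyd} and to the author's earlier paper \cite{dSPclass}, with only a brief remark that the uniqueness clauses follow as in \cite{AtkLloyd}. There is therefore no proof in the paper for your proposal to be compared with.

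As for the proposal itself, the opening moves are sound and are indeed in the spirit of the Atkinson--Lloyd argument: locating a rank-$(n-1)$ matrix via the Flanders/Meshulam bound, normalizing it, and expanding $\det(A+tM)$ to obtain $a(M)=0$ and $L(M)\,M_1(M)^k\,C(M)=0$ for every $M\in V$. The problems begin immediately after. First, the sentence ``Reading these identities across all $M\in V$ should yield the required trichotomy'' is precisely where the real work lies, and you have not done it: the relations you have written are \emph{per matrix} (they involve $L(M)$, $M_1(M)$, $C(M)$ for a single $M$), and passing from these to a structural statement about $V$ requires polarizing in $M$ and then a nontrivial argument on the resulting bilinear identities. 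It is not true in general that $L(M)C(M)=0$ for all $M$ forces either all $L(M)=0$ or all $C(M)=0$; one has to exploit the full family $L(M)M_1(M)^k C(M)=0$ together with the dimension hypothesis, and this is exactly the delicate core of the Atkinson--Lloyd proof.

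Second, your proposed use of the induction hypothesis is incorrect as stated. The projection of $V$ onto the $M_1$-block is \emph{not} a singular subspace of $\Mat_{n-1}(\K)$: the normalized matrix $A$ itself projects to $I_{n-1}$. The actual inductive mechanism in \cite{AtkLloyd} and \cite{dSPclass} restricts to a carefully chosen subspace of $V$ (typically the kernel of the map $M\mapsto C(M)$ or $M\mapsto L(M)$) before passing to a smaller matrix space, and one must check that the codimension bound survives this passage. Finally, your description of the exceptional case is loose: it is not that the polynomial-identity step ``collapses'' over $\F_2$, but rather that an explicit $5$-dimensional singular subspace (equivalent to the trace-zero lower-triangular matrices $\calJ_3(\F_2)$ in the paper's notation) survives all the above constraints without being of type (i), (ii) or (iii); this has to be exhibited and shown to be the only new phenomenon.
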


\begin{Rem}
In \cite{dSPclass}, the incompatibility between (i) and (ii) was not proven, nor was the uniqueness of $D$ in the case
$V$ is equivalent to a subspace of $\calR(n-1,0)$ or $\calR(0,n-1)$.
However, the proof is essentially similar to that of \cite{AtkLloyd}.
\end{Rem}

\subsection{Reduction to the case of an image-preserving map}\label{red1}

In this paragraph, we let $V$ be a linear subspace of $\Mat_n(\K)$ with codimension lesser than $n-1$,
and $f : V \hookrightarrow \Mat_n(\K)$ be a linear embedding such that
$f^{-1}(\GL_n(\K))=V \cap \GL_n(\K)$. We discard the case $n=3$, $\codim V=1$ and $\# \K=2$.
We also assume $n \geq 3$, since $V=\Mat_2(\K)$ if $n=2$, in which case the result we claim is already known (see \cite{Dieudonne}).
Our aim is to prove that, by pre and post-composing $f$ with well-chosen Frobenius automorphisms,
we may obtain a linear map (necessarily one-to-one) which preserves the image for any matrix of $V$.
Following Dieudonn\'e \cite{Dieudonne}, the basic idea is to study the subspaces
$f^{-1}(\calM_D)$ and $f^{-1}(\calM^D)$ for every $D \in \Pgros(E)$.

Let $D \in \Pgros(E)$. Then $\calM_D$ has codimension $n$ in $\Mat_n(\K)$, hence the rank theorem shows
that $\codim_V f^{-1}(\calM_D)=\codim_{f(V)} f(V) \cap \calM_D \leq \codim_{\calM_n(\K)} \calM_D=n$,
hence $\codim_{\Mat_n(\K)} f^{-1}(\calM_D) \leq 2n-2$ since $\codim_{\Mat_n(\K)} V \leq n-2$.
However, since $\calM_D$ is a maximal singular subspace of $\Mat_n(\K)$, $f$ is one-to-one and $f^{-1}(\GL_n(\K))=V \cap \GL_n(\K)$,
it is clear that $f^{-1}(\calM_D)$ is a maximal singular subspace of $V$.
A similar argument shows that $f^{-1}(\calM^D)$ has the same properties, hence the following result:

\begin{claim}\label{basicclaim}
For every $D \in \Pgros(E)$, the linear subspaces $f^{-1}(\calM_D)$ and $f^{-1}(\calM^D)$ are maximal singular subspaces of
$V$ with codimension $\leq 2n-2$ in $\Mat_n(\K)$.
\end{claim}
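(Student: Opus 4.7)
The plan is to verify each of the three assertions the claim makes about $f^{-1}(\calM_D)$, namely that it is singular, that its codimension in $\Mat_n(\K)$ is at most $2n-2$, and that it is inclusion-maximal among singular subspaces of $V$; the argument for $f^{-1}(\calM^D)$ follows verbatim with $\calM^D$ in place of $\calM_D$.

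Singularity is immediate from the preserver hypothesis: for $M \in f^{-1}(\calM_D)$, the image $f(M)$ lies in the singular subspace $\calM_D$, hence $f(M) \notin \GL_n(\K)$, and the equivalence $f^{-1}(\GL_n(\K)) = V \cap \GL_n(\K)$ then forces $M \notin \GL_n(\K)$. For the codimension bound, the rank theorem applied to the injective restriction $f|_V$ yields
$$\codim_V f^{-1}(\calM_D) = \codim_{f(V)}\bigl(f(V) \cap \calM_D\bigr) \leq \codim_{\Mat_n(\K)} \calM_D = n,$$
hence $\codim_{\Mat_n(\K)} f^{-1}(\calM_D) \leq \codim V + n \leq (n-2) + n = 2n-2$.

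The inclusion-maximality is the real point. I would argue by contradiction: assume $W$ is a singular subspace of $V$ that properly contains $f^{-1}(\calM_D)$. Then the preserver property makes $f(W)$ a singular subspace of $\Mat_n(\K)$, the injectivity of $f$ makes $f(W) \supsetneq f(f^{-1}(\calM_D)) = \calM_D \cap f(V)$, and the rank-theorem argument above also bounds $\codim_{\Mat_n(\K)} f(W) \leq 2n-2$. My plan is then to apply Theorem~\ref{atklloydtheorem} twice, to $\calM_D \cap f(V)$ and to $f(W)$; neither subspace falls under the excepted case, since both have codimension at least $n \geq 3$ in $\Mat_n(\K)$ as singular subspaces. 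From the inclusion $\calM_D \cap f(V) \subset \calM_D$ we see that $\calM_D \cap f(V)$ lies in case (i) of that theorem, and the uniqueness clause isolates $D$ as the only line for which such a containment holds. On the $f(W)$ side, case (iii) is excluded because the strict inclusion $\calM_D \cap f(V) \subsetneq f(W)$ forces $\codim f(W) < \codim(\calM_D \cap f(V)) \leq 2n-2$; case (ii) is ruled out because $f(W) \subset \calM^{D''}$ would drag $\calM_D \cap f(V) \subset \calM^{D''}$ and violate the mutual exclusivity of (i) and (ii); so $f(W)$ lies in case (i), say $f(W) \subset \calM_{D''}$, and the inclusion $\calM_D \cap f(V) \subset \calM_{D''}$ combined with uniqueness forces $D'' = D$. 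Hence $f(W) \subset \calM_D$, so $W \subset f^{-1}(\calM_D)$, contradicting the strict inclusion.

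The main obstacle I anticipate is the careful double use of Theorem~\ref{atklloydtheorem}, relying crucially on both the uniqueness of the line within case (i) and the mutual exclusivity of the three cases, as well as on the verification that the excepted situation of that theorem plays no role. A more elementary approach based solely on the inclusion-maximality of $\calM_D$ in $\Mat_n(\K)$ runs aground because the non-singular matrix furnished by that maximality inside a sum $\calM_D + f(W)$ need not lie in $f(V)$ and so cannot be transported back through $f$ to produce a non-singular element of $W$.
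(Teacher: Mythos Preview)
Your argument is correct. The singularity and codimension parts match the paper verbatim. For maximality, however, the paper simply asserts that it is ``clear'' from the maximality of $\calM_D$ in $\Mat_n(\K)$, the injectivity of $f$, and the hypothesis $f^{-1}(\GL_n(\K))=V\cap\GL_n(\K)$; no further justification is given. You correctly identify in your last paragraph why this is not immediate: a non-singular matrix produced by the maximality of $\calM_D$ inside $\calM_D+f(W)$ need not lie in $f(V)$, so cannot be pulled back through $f$. Your remedy --- applying Theorem~\ref{atklloydtheorem} to both $\calM_D\cap f(V)$ and $f(W)$, and using the uniqueness within case (i) and the mutual exclusivity of (i) and (ii) --- is valid and fills in this gap rigorously.

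In effect you are pre-empting the paper's Claim~\ref{basiclaim2}, which invokes the same theorem on $f^{-1}(\calM_D)$ immediately afterwards; the paper evidently regarded Claim~\ref{basicclaim} as a routine preliminary and left the maximality step undeveloped. A slightly lighter variant of your argument: once the strict inclusion gives $\codim f(W)<2n-2$, case (iii) and the excepted situation are already excluded, so it suffices to apply Theorem~\ref{atklloydtheorem} to $f(W)$ alone and then observe by a direct dimension count that $\calM_D\cap\calM_{D''}$ has codimension $2n$ for $D''\neq D$ while $\calM_D\cap\calM^{D''}$ has codimension $2n-1$, both exceeding $\codim(\calM_D\cap f(V))\leq 2n-2$. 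This forces $f(W)\subset\calM_D$ without a second appeal to the uniqueness and exclusivity clauses.
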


Using Theorem \ref{atklloydtheorem}, we deduce:

\begin{claim}\label{basiclaim2}
For any $D \in \Pgros(E)$, one and only one of the following conditions holds:
\begin{enumerate}[(i)]
\item There is a unique $D' \in \Pgros(E)$ such that $f^{-1}(\calM_D)=V \cap \calM_{D'}$;
\item There is a unique $D' \in \Pgros(E)$ such that $f^{-1}(\calM_D)=V \cap \calM^{D'}$;
\item The subspace $f^{-1}(\calM_D)$ is equivalent to $\calR(n-2,1)$ or $\calR(1,n-2)$, and $\codim V=n-2$.
\end{enumerate}
A similar result also holds for $f^{-1}(\calM^D)$ instead of $f^{-1}(\calM_D)$.
\end{claim}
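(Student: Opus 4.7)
The plan is to apply Theorem \ref{atklloydtheorem} to the singular subspace $W := f^{-1}(\calM_D)$: by Claim \ref{basicclaim}, $W$ is maximal among singular subspaces of $V$ and satisfies $\codim_{\Mat_n(\K)} W \leq 2n-2$. Before invoking that theorem one must verify that its exception ($n=3$, $\#\K=2$, $\codim W=1$) cannot occur here. Since $\codim V \leq n-2 < n$, $V$ cannot itself be singular (singular subspaces of $\Mat_n(\K)$ have codimension at least $n$), hence $V \cap \GL_n(\K) \neq \emptyset$, which forces $\codim_V W \geq 1$. When $n=3$ and $\K \simeq \F_2$, our hypotheses discard $\codim V = 1$, leaving only $V = \Mat_3(\F_2)$; but then $f$ is a bijection and $\codim_{\Mat_3(\F_2)} W = \codim_{\Mat_3(\F_2)} \calM_D = 3 \neq 1$. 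The exception is thereby ruled out in every remaining case.

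Theorem \ref{atklloydtheorem} then yields exactly one of three mutually exclusive possibilities. In the first, $W \subset \calM_{D'}$ for a unique $D' \in \Pgros(E)$; then $V \cap \calM_{D'}$ is a singular subspace of $V$ containing $W$, so the maximality of $W$ from Claim \ref{basicclaim} upgrades the inclusion to the equality $W = V \cap \calM_{D'}$. Uniqueness of $D'$ is inherited from the uniqueness in Theorem \ref{atklloydtheorem}, and condition (i) of the claim is obtained. The second possibility, $W \subset \calM^{D'}$, is handled verbatim and produces condition (ii).

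In the third possibility, $\codim_{\Mat_n(\K)} W = 2n-2$ and $W$ is equivalent to $\calR(n-2,1)$ or $\calR(1,n-2)$. Combining the additive identity $\codim_{\Mat_n(\K)} W = \codim V + \codim_V W$ with the standard bound
$$\codim_V W \;=\; \codim_{f(V)}\bigl(f(V) \cap \calM_D\bigr) \;\leq\; \codim_{\Mat_n(\K)} \calM_D \;=\; n,$$
one obtains $2n-2 \leq \codim V + n$, i.e.\ $\codim V \geq n-2$, and together with $\codim V \leq n-2$ this forces $\codim V = n-2$; condition (iii) is thereby obtained. Mutual exclusivity of (i)--(iii) is inherited from that of the three options in Theorem \ref{atklloydtheorem}. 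The parallel assertion for $f^{-1}(\calM^D)$ follows by a verbatim repetition of the same argument, as Claim \ref{basicclaim} applies identically to both preimages. I do not anticipate any genuine obstacle here: the proof is essentially a packaging exercise around Claim \ref{basicclaim} and Theorem \ref{atklloydtheorem}; the only delicate points are the careful exclusion of the $\F_2$-exception and the codimension count that pins down $\codim V = n-2$ in case (iii).
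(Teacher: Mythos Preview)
Your proof is correct and follows exactly the approach the paper intends: the paper simply writes ``Using Theorem \ref{atklloydtheorem}, we deduce'' and leaves the details implicit, whereas you spell out the maximality upgrade in cases (i)--(ii), the codimension count forcing $\codim V=n-2$ in case (iii), and the careful exclusion of the $\F_2$-exception. These are precisely the omitted verifications, so your argument is the intended one, just written out in full.
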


For the rest of the paragraph, we will admit the following lemma, the proof of which is tedious
and will only be given in Section \ref{techlemma}:

\begin{lemme}\label{hightechlemma}
Let $V$ be a linear subspace of codimension $n-2$ in $\Mat_n(\K)$, and $g : V \hookrightarrow \Mat_n(\K)$
be a linear embedding such that $g^{-1}(\GL_n(\K))=V \cap \GL_n(\K)$.
Assume that $(n,\# \K) \neq (3,2)$.
Let $D \in \Pgros(E)$. Then $g^{-1}(\calM_D)$ is equivalent neither to $\calR(n-2,1)$ nor to $\calR(1,n-2)$.
\end{lemme}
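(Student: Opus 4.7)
The plan is to argue by contradiction. I would prove two parallel statements by the same method: that $g^{-1}(\calM_D)$ is not equivalent to $\calR(n-2,1)$, and that $g^{-1}(\calM^D)$ is not equivalent to $\calR(n-2,1)$. The first directly excludes the $\calR(n-2,1)$ possibility in the lemma; the second, applied to $h(A) := g(A^T)^T$ (viewed as a linear embedding $V^T \hookrightarrow \Mat_n(\K)$ that still strongly preserves non-singularity, and for which $h^{-1}(\calM^D) = (g^{-1}(\calM_D))^T$), excludes the $\calR(1, n-2)$ possibility. I focus on the first.

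Suppose $W := g^{-1}(\calM_D) \sim \calR(n-2,1)$. By post-composing $g$ with a Frobenius automorphism and simultaneously transporting $V$ through an ambient equivalence, one may normalize to $W = \calR(n-2,1)$ and $D = \K e_n$. In this setting, the intrinsic description $\calR(n-2,1) = \{A \in \Mat_n(\K) : A\,\Vect(e_2,\ldots,e_n) \subset \Vect(e_1,\ldots,e_{n-2})\}$ provides a projection $\pi : \Mat_n(\K) \to \Mat_{2, n-1}(\K)$ onto the bottom-right block, with $W = \ker \pi$. Since $V \supset W$ has codimension $n-2$, the image $L := \pi(V)$ is an $n$-dimensional subspace of $\Mat_{2, n-1}(\K)$ and $V = \pi^{-1}(L)$. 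The linear map $\varphi : V \to \K^n$, $A \mapsto g(A) e_n$, vanishes on $W$ and therefore factors as $\varphi = \bar\varphi \circ \pi|_V$ with $\bar\varphi : L \xrightarrow{\sim} \K^n$ a linear isomorphism. The pair $(L, \bar\varphi)$ is the rigid structural data to exploit.

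To produce the contradiction, I would apply Claim~\ref{basiclaim2}, together with its straightforward analogue for $\calM^{D'}$, to the preimages $g^{-1}(\calM_{D'})$ and $g^{-1}(\calM^{D'})$ as $D'$ ranges over a carefully chosen collection of lines of $E$ (for instance the pencil of lines in $\Vect(e_{n-1}, e_n)$ through $D$, and the coordinate lines $\K e_i$). Each such preimage falls into case (i), (ii), or (iii); the rigid block structure of $W$ will prevent case~(iii) from propagating to enough of these auxiliary directions, and cases~(i)/(ii) for enough $D'$'s will pin down the pair $(L, \bar\varphi)$ tightly enough to produce a non-singular $A_0 \in V$ whose image $g(A_0)$ has two proportional rows or columns, contradicting $g(A_0) \in \GL_n(\K)$.

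The main obstacle is this last step: the case analysis to rule out case~(iii) for each auxiliary $D'$ and then to construct the explicit $A_0$ is intricate, which is where the tedium alluded to in the statement resides. The exclusion $(n, \#\K) \neq (3, 2)$ enters at every invocation of Theorem~\ref{atklloydtheorem} on the auxiliary preimages, since the exceptional case of that theorem would otherwise add a fourth possibility.
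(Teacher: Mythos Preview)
Your outline captures the opening normalization and the broad strategy of probing auxiliary preimages via Claim~\ref{basiclaim2}, which is indeed how the paper begins. But what you have written is a plan, not a proof, and the plan misses the specific mechanisms that make the argument go through. In particular, your proposed endgame---``pin down $(L,\bar\varphi)$ tightly enough to produce a non-singular $A_0$ with $g(A_0)$ having two proportional rows or columns''---is not how the contradiction arises, and it is not clear that such an $A_0$ can be extracted from the structural data you collect.

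The paper's argument has several key moves you do not anticipate. After normalizing to $g^{-1}(\calM_D)=\calR(1,n-2)$, it studies the \emph{forward} images $H_{D_1}=g(V\cap\calM_{D_1})$ for lines $D_1\subset\Vect(e_{n-1},e_n)$, shows each is equivalent to $\calR(1,n-2)$, and proves that the associated $2$-planes $P_{D_1}$ all coincide and contain $e_n$ (Claims~\ref{rightside} and~\ref{sameplane}). This forces $g(V')=V'$ and $g(\calR(2,n-2))\subset\calR(2,n-2)$. A crucial, non-obvious step is then to \emph{swap} $(g,V)$ with $(g^{-1},g(V))$ so as to gain the extra hypothesis $\calR(2,n-2)\subset V$. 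On the $2\times 2$ corner block this yields a Frobenius automorphism of $\Mat_2(\K)$ (via Dieudonn\'e), which is forced to be a transpose; Lemma~\ref{addtononsingular} (a matrix $A$ with $A+P\in\GL_p(\K)$ for all $P\in\GL_p(\K)$ must vanish) then pins down the action on the remaining block. The final contradiction comes not from a single bad matrix but from analyzing $H=g(V\cap\calM^{\Vect(e_1)})$: one shows $H$ is equivalent to some $\calR(1,n-2)$ or $\calR(n-2,1)$, hence spanned by rank~$1$ matrices, and that every such rank~$1$ matrix has zero lower-right $(n-2)\times 2$ block, forcing $H\subset\calR(2,n-2)$ and hence $\codim V\geq 2(n-2)$, a contradiction.

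None of the steps ``swap to $g^{-1}$'', ``apply Dieudonn\'e on the $2\times 2$ block'', ``invoke Lemma~\ref{addtononsingular}'', or ``contradict via $\calM^{\Vect(e_1)}$ and a codimension count'' appear in your outline, and they are where the real content lies. Your projection/factorization $(L,\bar\varphi)$ is a clean way to record the data, but by itself it does not suggest any of these moves.
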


This yields:

\begin{claim}\label{inverseimageclaim}
For every $D \in \Pgros(E)$, there is a unique $D' \in \Pgros(E)$ such that
$f^{-1}(\calM_D)=V \cap \calM_{D'}$ or $f^{-1}(\calM_D)=V \cap \calM^{D'}$, and only one of those two conditions holds.
\end{claim}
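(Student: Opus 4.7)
The plan is to deduce Claim~\ref{inverseimageclaim} directly from Claim~\ref{basiclaim2} by ruling out alternative (iii) in all cases, so that only the clean dichotomy between (i) and (ii) survives, and then invoke the uniqueness/incompatibility already built into Theorem~\ref{atklloydtheorem}.

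First, I would apply Claim~\ref{basiclaim2} to the given $D \in \Pgros(E)$: exactly one of the alternatives (i), (ii), (iii) holds for $f^{-1}(\calM_D)$. In alternatives (i) and (ii), the line $D'$ is already asserted to be unique, and the incompatibility between the two forms $\calM_{D'}$ and $\calM^{D'}$ is part of Theorem~\ref{atklloydtheorem} (recall that we have discarded the exceptional case $n=3$, $\codim V=1$, $\#\K=2$). Hence it suffices to exclude alternative (iii).

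To do so, I would first observe that alternative (iii) forces $\codim V = n-2$. Indeed, $\calR(n-2,1)$ and $\calR(1,n-2)$ have codimension $2n-2$ in $\Mat_n(\K)$, while the rank theorem yields
$$\codim_{\Mat_n(\K)} f^{-1}(\calM_D) \leq \codim V + \codim \calM_D \leq (n-2)+n = 2n-2,$$
with equality only when $\codim V = n-2$. In that situation, I would invoke Lemma~\ref{hightechlemma} (applied with $g=f$), whose conclusion is precisely that $f^{-1}(\calM_D)$ is then equivalent neither to $\calR(n-2,1)$ nor to $\calR(1,n-2)$, contradicting alternative (iii). Therefore only alternatives (i) and (ii) can occur, which is exactly the claim.

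The entire difficulty of Claim~\ref{inverseimageclaim} is thus concentrated in Lemma~\ref{hightechlemma}, whose proof is deferred to Section~\ref{techlemma}; that technical lemma is the genuine obstacle, while the reduction outlined above is a short bookkeeping step once the lemma is granted.
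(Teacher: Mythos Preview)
Your proposal is correct and follows exactly the route the paper takes: the paper simply states Lemma~\ref{hightechlemma} and then writes ``This yields'' Claim~\ref{inverseimageclaim}, leaving implicit precisely the bookkeeping you spell out (ruling out alternative~(iii) of Claim~\ref{basiclaim2} via the lemma, and reading off uniqueness and mutual exclusivity from Theorem~\ref{atklloydtheorem}). Your added remark that alternative~(iii) already carries the hypothesis $\codim V=n-2$ needed to invoke Lemma~\ref{hightechlemma} is accurate and is in fact built into the statement of Claim~\ref{basiclaim2} itself.
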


Here is our next claim:

\begin{claim}\label{coherenceclaim1}
Assume there is a pair $(D_1,D'_1) \in \Pgros(E)^2$ such that $f^{-1}(\calM_{D_1})=V \cap \calM_{D_1'}$.
Then, for every $D \in \Pgros(E)$, there is a unique $D' \in \Pgros(E)$ such that $f^{-1}(\calM_D)=V \cap \calM_{D'.}$
\end{claim}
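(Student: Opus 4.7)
\textbf{Proof plan for Claim \ref{coherenceclaim1}.}

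The plan is to argue by contradiction. Suppose the conclusion fails: then by Claim \ref{inverseimageclaim}, there exists some $D \in \Pgros(E)$ and a unique $D'' \in \Pgros(E)$ for which $f^{-1}(\calM_D)=V \cap \calM^{D''}$ (type (ii)), while the hypothesis gives $f^{-1}(\calM_{D_1})=V \cap \calM_{D_1'}$ (type (i)). The goal is to derive a contradiction from the coexistence of these two types.

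I would split the analysis according to whether $D=D_1$. In the easy case $D=D_1$, the two descriptions would yield $V\cap \calM_{D_1'}=V\cap \calM^{D''}$. This single subspace would then be a singular subspace of $\Mat_n(\K)$ of codimension at most $2n-2$ that simultaneously satisfies conditions (i) and (ii) of Theorem \ref{atklloydtheorem} (with witnesses $D_1'$ and $D''$ respectively). The mutual exclusion clause of Theorem \ref{atklloydtheorem}, together with the excluded exceptional case $n=3$, $\codim V=1$, $\#\K=2$, would give the contradiction.

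The substantive case is $D\neq D_1$. Rename $D=:D_2$ and $D''=:D_2'$, and set
$$W:=f^{-1}(\calM_{D_1})\cap f^{-1}(\calM_{D_2})=V\cap \calM_{D_1'}\cap \calM^{D_2'}=f^{-1}(\calM_{D_1}\cap \calM_{D_2}).$$
Since $D_1\neq D_2$, the 2-dimensional subspace $D_1+D_2$ yields $\calM_{D_1}\cap \calM_{D_2}=\{M\in\Mat_n(\K):D_1+D_2\subset \Ker M\}$. For every line $D_3\subset D_1+D_2$ we have $\calM_{D_3}\supset \calM_{D_1}\cap\calM_{D_2}$, hence $f^{-1}(\calM_{D_3})\supset W$. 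The projective line $\Pgros(D_1+D_2)$ has $\#\K+1\geq 3$ points, so we can pick $D_3$ distinct from both $D_1$ and $D_2$, and indeed we can pick arbitrarily many such $D_3$ when $\#\K\geq 3$. The plan is then to apply Claim \ref{inverseimageclaim} to each such $D_3$ and, by playing the two possible types against the type-(i) assumption at $D_1$ and the type-(ii) assumption at $D_2$, to force the previous case-$D=D_1$ argument upon a well-chosen $D_3$: concretely, one shows that some $D_3$ must produce a singular subspace of $V$ contained simultaneously in a $\calM_\bullet$ and a $\calM^\bullet$, violating the uniqueness clause in Theorem \ref{atklloydtheorem}.

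The main obstacle is clearly Step 2. The $D=D_1$ subcase is a one-line application of the incompatibility of (i) and (ii), but the generic case $D\neq D_1$ requires a careful pivot argument on the pencil $\Pgros(D_1+D_2)$. The delicate point is to verify that the two types cannot coexist on this pencil even over small fields (where only a handful of lines $D_3$ are available), and in particular to rule out pathological configurations where $W$ sits inside more maximal singular subspaces of $V$ than its dimension naively permits; this is where the bound $\codim V<n-1$ and the rank theorem applied to $f$ must be used to control the codimensions of the various $f^{-1}(\calM_{D_3})$.
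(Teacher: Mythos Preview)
Your plan leaves the decisive step unexecuted, and the mechanism you describe for the contradiction is not the one that actually fires. You aim to exhibit some $D_3$ whose preimage is ``contained simultaneously in a $\calM_\bullet$ and a $\calM^\bullet$'', so as to invoke the mutual-exclusion clause of Theorem~\ref{atklloydtheorem}; but Claim~\ref{inverseimageclaim} already guarantees that each $f^{-1}(\calM_{D_3})$ is of exactly one type, so you will never manufacture such a $D_3$. (Incidentally, your case $D=D_1$ is already excluded by Claim~\ref{inverseimageclaim} and need not be discussed.) Your pencil idea \emph{can} be completed, but via a direct dimension count on $W$ rather than the uniqueness clause: with $f^{-1}(\calM_{D_1})=V\cap\calM_{D_1'}$, $f^{-1}(\calM_{D_2})=V\cap\calM^{D_2'}$ and any third line $D_3\subset D_1+D_2$, if say $f^{-1}(\calM_{D_3})=V\cap\calM_{D_3'}$, then $\calM_{D_1}\cap\calM_{D_3}=\calM_{D_1}\cap\calM_{D_2}$ gives $W=V\cap\calM_{D_1'}\cap\calM_{D_3'}$ with $D_1'\neq D_3'$, so $W\subset\calM_{D_1'}\cap\calM_{D_3'}\cap\calM^{D_2'}$, a space of dimension $(n-1)(n-2)$; yet $W=V\cap\calM_{D_1'}\cap\calM^{D_2'}$ forces $\dim W\geq (n-1)^2-(n-2)=(n-1)(n-2)+1$. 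The other type for $D_3$ is symmetric, and a single $D_3$ suffices (no need for ``arbitrarily many'').

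The paper takes a different, cleaner route that avoids the pencil altogether. Given $D_2\neq D_1$, it extends nonzero vectors $x_1\in D_1$, $x_2\in D_2$ to a full basis $(x_1,\dots,x_n)$ of $E$, sets $D_i:=\Vect(x_i)$, and applies Claim~\ref{inverseimageclaim} to all $n$ lines at once. Writing $I$ (resp.\ $J$) for the indices of type (i) (resp.\ (ii)) and $F:=\sum_{i\in I}D_i'$, $G:=\sum_{j\in J}D_j'$, the key point is that $\bigcap_{i=1}^n\calM_{D_i}=\{0\}$, hence
\[
\{0\}=V\cap\Bigl(\bigcap_{i\in I}\calM_{D_i'}\Bigr)\cap\Bigl(\bigcap_{j\in J}\calM^{D_j'}\Bigr).
\]
The bracketed intersection has dimension $(n-\dim F)(n-\dim G)\geq(\dim G)(n-\dim G)\geq n-1$ whenever $J\neq\emptyset$ (using $\dim F+\dim G\leq n$), and since $\codim V<n-1$ this intersection cannot meet $V$ trivially. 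Hence $J=\emptyset$, which is exactly the claim.
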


\begin{proof}
Let $D_2 \in \Pgros(E) \setminus \{D_1\}$. We may then choose non-zero vectors $x_1 \in D_1$, $x_2 \in D_2$
and extend $(x_1,x_2)$ into a basis $(x_1,\dots,x_n)$ of $E$. Set $D_i:=\Vect(x_i)$ for $i \in \lcro 3,n\rcro$.
For every $i \in \lcro 2,n\rcro$, we may find a (unique) $D'_i \in \Pgros(E)$ such that
$f^{-1}(\calM_{D_i})=V \cap \calM_{D'_i}$ or $f^{-1}(\calM_{D_i})=V \cap \calM^{D'_i}$.
Define $I$ as the set of those $i \in \lcro 1,n\rcro$ such that $f^{-1}(\calM_{D_i})=V \cap \calM_{D'_i}$, and $J:=\lcro 1,n\rcro \setminus I$.
Set also $F:=\underset{i \in I}{\sum} D'_i$ and $G:=\underset{i \in J}{\sum} D'_i$, and notice that $\dim F+\dim G \leq n$.
Note that $\underset{1 \leq i \leq n}{\bigcap} \calM_{D_i}=\{0\}$, hence
$$\{0\}=f^{-1}\biggl(\underset{1 \leq i \leq n}{\bigcap} \calM_{D_i}\biggr)
=\underset{1 \leq i \leq n}{\bigcap} V\cap f^{-1}(\calM_{D_i})=
V \cap \underset{i \in I}{\bigcap} \calM_{D'_i} \cap \underset{j \in J}{\bigcap} \calM^{D'_j}.$$
However, $\underset{i \in I}{\bigcap} \calM_{D'_i}$ is the set of matrices $M \in \Mat_n(\K)$ such that
$F \subset \Ker M$, and $\underset{j \in J}{\bigcap} \calM^{D'_j}$ the set of matrices $M \in \Mat_n(\K)$ such that
$\im M \subset G^\bot$, hence
$$\dim \Biggl[\underset{i \in I}{\bigcap} \calM_{D'_i} \cap \underset{j \in J}{\bigcap} \calM^{D'_j}\Biggr]
=(n-\dim F)\,(n-\dim G) \geq \dim G\,(n-\dim G).$$
Assume finally that $J \neq \emptyset$. Then $1 \leq \dim G \leq n-1$ hence
$(\dim G)\,(n-\dim G) \geq n-1$. Since $\codim V <n-1$, this yields
$$V \cap \underset{i \in I}{\bigcap} \calM_{D'_i} \cap \underset{j \in I}{\bigcap} \calM^{D'_j} \neq \{0\},$$
in contradiction with a previous result. We deduce that $J=\emptyset$.
\end{proof}

With a similar proof, or by applying the above results to $M \mapsto f(M^T)^T$, we also have:

\begin{claim}\label{coherenceclaim2}
Assume there is a pair $(D_1,D'_1) \in \Pgros(E)^2$ such that $f^{-1}(\calM^{D_1})=V \cap \calM^{D_1'}$.
Then, for every $D \in \Pgros(E)$, there is a unique $D' \in \Pgros(E)$ such that $f^{-1}(\calM^D)=V \cap \calM^{D'.}$
\end{claim}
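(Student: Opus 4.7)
The plan is to deduce Claim \ref{coherenceclaim2} from Claim \ref{coherenceclaim1} by transposition, rather than rerun the entire combinatorial argument. The key observation is that replacing $f$ by the map obtained by conjugating with transposition swaps the roles of $\calM_D$ and $\calM^D$, so the hypothesis on $f^{-1}(\calM^{D_1})$ translates into the hypothesis of the already-proved claim.

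Concretely, I would introduce $\tilde{f} : V^T \hookrightarrow \Mat_n(\K)$ defined by $\tilde{f}(N):=f(N^T)^T$. First I would check that $\tilde{f}$ satisfies all the standing hypotheses: it is linear and one-to-one because transposition and $f$ are, the codimension of $V^T$ in $\Mat_n(\K)$ equals that of $V$ (so still less than $n-1$), and $\tilde{f}$ strongly preserves non-singularity since $N \in \GL_n(\K) \Leftrightarrow N^T \in \GL_n(\K) \Leftrightarrow f(N^T) \in \GL_n(\K) \Leftrightarrow \tilde{f}(N) \in \GL_n(\K)$. The excluded case $(n,\#\K)=(3,2)$ with $\codim V=1$ is invariant under this construction, so it remains discarded.

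Next I would use the identities $(\calM^D)^T=\calM_D$ and $V^T \cap \calM_D=(V \cap \calM^D)^T$ to rewrite, for every $D \in \Pgros(E)$,
$$\tilde{f}^{-1}(\calM_D)=\bigl(f^{-1}(\calM^D)\bigr)^T.$$
In particular, the assumption $f^{-1}(\calM^{D_1})=V \cap \calM^{D_1'}$ translates into $\tilde{f}^{-1}(\calM_{D_1})=V^T \cap \calM_{D_1'}$, which is precisely the hypothesis of Claim \ref{coherenceclaim1} applied to $\tilde{f}$. That claim then delivers, for each $D \in \Pgros(E)$, a unique $D' \in \Pgros(E)$ with $\tilde{f}^{-1}(\calM_D)=V^T \cap \calM_{D'}$, and transposing back yields $f^{-1}(\calM^D)=V \cap \calM^{D'}$ with uniqueness of $D'$ carried over from the uniqueness already proven.

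The main thing to be careful about is really just bookkeeping: making sure that the bijection $M \mapsto M^T$ correctly intertwines $\calM_D \leftrightarrow \calM^D$ and intersections with $V \leftrightarrow V^T$, and that the Claim \ref{inverseimageclaim} dichotomy (which is what legitimizes applying Claim \ref{coherenceclaim1}) is likewise available for $\tilde{f}$. Since Claim \ref{inverseimageclaim} was obtained for an arbitrary $f$ satisfying the standing hypotheses, and $\tilde{f}$ does satisfy them, there is no genuine obstacle; the proof is essentially a formal dualization.
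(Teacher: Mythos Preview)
Your proposal is correct and matches the paper's own approach: the paper states that Claim \ref{coherenceclaim2} follows either by repeating the proof of Claim \ref{coherenceclaim1} or ``by applying the above results to $M \mapsto f(M^T)^T$'', and you have carried out the latter option in full detail. Your bookkeeping (the identities $(\calM^D)^T=\calM_D$, $\tilde{f}^{-1}(\calM_D)=(f^{-1}(\calM^D))^T$, and the observation that Claim \ref{inverseimageclaim} applies to $\tilde{f}$) is accurate, so there is nothing to add.
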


We now lose no generality making the following additional assumption:
\begin{center}
There is a pair $(D_1,D'_1) \in \Pgros(E)^2$ such that $f^{-1}(\calM^{D_1})=V \cap \calM^{D'_1}$.
\end{center}
Indeed, in the case this does not hold, we still have some pair $(D_1,D'_1) \in \Pgros(E)^2$ such that
$f^{-1}(\calM_{D_1})=V \cap \calM^{D'_1}$, and we may then replace $f$ with $M \mapsto f(M)^T$, which satisfies the preceding assumption.

Now, Claim \ref{coherenceclaim1} applied to both $f$ and $f^{-1} : f(V) \hookrightarrow \Mat_n(\K)$
shows there is a bijective map $\varphi : \Pgros(E) \rightarrow \Pgros(E)$ such that
$f(V \cap \calM^D)=f(V) \cap \calM^{\varphi(D)}$ for every $D \in \Pgros(E)$.
Let $D \in \Pgros(E)$. If $f(V \cap \calM_D)=f(V) \cap \calM^{D'}$ for some line $D'$,
then $f(V \cap \calM_D)=f(V \cap \calM_{\varphi^{-1}(D')})$ and therefore $V \cap \calM_D=V \cap \calM_{\varphi^{-1}(D')}$, contradicting
the uniqueness in Theorem \ref{atklloydtheorem}. Therefore $f(V \cap \calM_D)=V \cap \calM_{D'}$ for some $D' \in \Pgros(E)$.
Claim \ref{coherenceclaim1} applied to both $f$ and $f^{-1}$ then shows there is a bijective map
$\psi : \Pgros(E) \rightarrow \Pgros(E)$ such that $f(V \cap \calM_D)=f(V) \cap \calM_{\psi(D)}$ for every $D \in \Pgros(E)$.

\begin{claim}
The map $\varphi$ is a projective automorphism of $\Pgros(E)$.
\end{claim}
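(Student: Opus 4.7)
The plan is to show that $\varphi$ sends projective lines of $\Pgros(E)$ to projective lines, and to conclude by the fundamental theorem of projective geometry, which applies since $\dim E=n \geq 3$.

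The starting observation is that for distinct lines $D_1,D_2 \in \Pgros(E)$, one has
$$\calM^{D_1} \cap \calM^{D_2}=\bigl\{M \in \Mat_n(\K) : \; \im M \subset (D_1+D_2)^\bot\bigr\},$$
a subspace depending only on the plane $D_1+D_2$. In particular, for every line $D_3 \subset D_1+D_2$ the inclusion $\calM^{D_1} \cap \calM^{D_2} \subset \calM^{D_3}$ holds, and intersecting with $V$ and applying $f$ yields
$$f(V) \cap \calM^{\varphi(D_1)} \cap \calM^{\varphi(D_2)}=f\bigl(V \cap \calM^{D_1} \cap \calM^{D_2}\bigr) \subset f\bigl(V \cap \calM^{D_3}\bigr)=f(V) \cap \calM^{\varphi(D_3)}.$$

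The heart of the argument is to show that this ``$f(V)$-truncated'' inclusion already forces the full inclusion $\calM^{\varphi(D_1)} \cap \calM^{\varphi(D_2)} \subset \calM^{\varphi(D_3)}$, equivalently $\varphi(D_3) \subset \varphi(D_1)+\varphi(D_2)$. Set $A:=\calM^{\varphi(D_1)} \cap \calM^{\varphi(D_2)}$ and $B:=\calM^{\varphi(D_3)}$, and suppose for contradiction that $\varphi(D_3) \not\subset \varphi(D_1)+\varphi(D_2)$. Since $\varphi$ is injective, $\varphi(D_1),\varphi(D_2),\varphi(D_3)$ then span a three-dimensional subspace of $E$, so $\dim A=n(n-2)$ whereas $\dim(A \cap B)=n(n-3)$. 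Using $\codim f(V)=\codim V \leq n-2$, one has
$$\dim(A \cap f(V)) \geq \dim A-\codim f(V) \geq n^2-3n+2 > n(n-3)=\dim(A \cap B),$$
contradicting the inclusion $A \cap f(V) \subset A \cap B$. This codimension count is the main technical step, and the strict hypothesis $\codim V<n-1$ is exactly what makes the numerics work (leaving the extra ``$+2$'' slack).

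Finally, applying the same reasoning to $f^{-1} : f(V) \hookrightarrow \Mat_n(\K)$, which still strongly preserves non-singularity, shows that $\varphi^{-1}$ also sends projective lines into projective lines. Hence $\varphi$ carries projective lines bijectively onto projective lines of $\Pgros(E)$, and the fundamental theorem of projective geometry yields that $\varphi$ is induced by a semilinear automorphism of $E$, i.e., is a projective automorphism of $\Pgros(E)$.
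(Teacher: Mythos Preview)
Your collinearity argument is correct and essentially identical to the paper's: both compare the dimensions of $V \cap \calM^{D_1} \cap \calM^{D_2} \cap \calM^{D_3}$ and $f(V) \cap \calM^{\varphi(D_1)} \cap \calM^{\varphi(D_2)} \cap \calM^{\varphi(D_3)}$, and the inequality $n(n-2)-(n-2)>n(n-3)$ produces the contradiction. The application to $f^{-1}$ is harmless but unnecessary, since the fundamental theorem only requires that a bijection send collinear triples to collinear triples.

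The genuine gap is in your last sentence. The fundamental theorem of projective geometry yields only that $\varphi$ is induced by a \emph{semilinear} automorphism of $E$; this is what is usually called a collineation. In this paper, ``projective automorphism'' means induced by a \emph{linear} automorphism: immediately after the claim the author writes ``Denote then by $P$ the non-singular matrix of $\Mat_n(\K)$ such that $\varphi(X)=PX$ for every $X \in E$'', which would be meaningless for a merely semilinear map. Over fields with nontrivial automorphisms (any $\F_{p^k}$ with $k \geq 2$, or $\C$, etc.) the distinction is real, and nothing in your argument rules out a nontrivial field automorphism.

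The paper devotes the second half of its proof precisely to this point. It uses the companion bijection $\psi$ on the $\calM_D$ side (also semilinear by the same reasoning), picks rank-one matrices $X_1Y_0^T,\,X_2Y_0^T \in V$, tracks their images via $\psi$ to matrices of the form $X'_i(Y'_0)^T$, and then exploits the \emph{linearity of $f$} on linear combinations $\alpha X_1+\beta X_2$ to force the field automorphism $\lambda$ attached to $u$ to satisfy $(\alpha-\lambda(\alpha))\cdot(\text{nonzero scalar})=0$ for all $\alpha \in \K$, hence $\lambda=\id_\K$. This step is where the $\K$-linearity of $f$ (as opposed to mere additivity) actually enters, and it is missing from your argument.
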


\begin{proof}
First notice that $\varphi$ preserves alinement on the projective space $\Pgros(E)$.
Indeed, let $D_1$, $D_2$ and $D_3$ be three distinct lines of $E$ and assume that
$D_1+D_2+D_3$ has dimension $2$ and $\varphi(D_1)+\varphi(D_2)+\varphi(D_3)$ has dimension $3$.
Notice that $\underset{i=1}{\overset{3}{\bigcap}}\calM^{D_i}$ has codimension $2n$ in $\Mat_n(\K)$ whereas
$\underset{i=1}{\overset{3}{\bigcap}}\calM^{\varphi(D_i)}$ has codimension $3n$.
It follows that
$$\dim \biggl[f(V) \cap \underset{i=1}{\overset{3}{\bigcap}}\calM^{\varphi(D_i)}\biggr] \leq n(n-3),$$
whereas
$$\dim \biggl[V \cap \underset{i=1}{\overset{3}{\bigcap}}\calM^{D_i}\biggr] \geq n(n-2)-n+2>n(n-3),$$
contradicting the definition of $\varphi$. \\
By the fundamental theorem of projective geometry (recall that $\dim E \geq 3$), we deduce that there is a semi-linear automorphism
$u$ of $E$ such that $\varphi(D)=u(D)$ for every $D \in \Pgros(E)$. The same line of reasoning shows there is a semi-linear automorphism
$v$ of $E$ such that $\psi(D)=v(D)$ for every $D \in \Pgros(E)$. \\
It only remains to prove that $u$ is linear.
Consider an arbitrary non-zero vector $Y_0 \in E \setminus \{0\}$, notice that
$\{XY_0^T\mid X \in E\}$ is an $n$-dimensional linear subspace of $\Mat_n(\K)$, hence we may find two
linearly independent vectors $X_1$ and $X_2$ in $E$ such that $X_1Y_0^T$ and $X_2Y_0^T$ belong to $V$.
Since $v$ is a semi-linear automorphism of $E$,
we find that there is a non-zero vector $Y'_0 \in E$ such that, for every $X \in E$ such that
$XY_0^T \in V$, one has $f(XY_0^T) =X'(Y'_0)^T$ for some $X' \in E$:
indeed, we may consider a basis $(Y_2,\dots,Y_n)$ of the linear hyperplane $\{Y_0\}^\bot$, notice then that
$\underset{i=2}{\overset{n}{\bigcap}}\calM_{v(\Vect(Y_i))}$ is the set of matrices which vanish on
the hyperplane $\Vect(v(Y_i))_{2 \leq i \leq n}$ and then choose a non-zero vector $Y'_0$ in its orthogonal subspace.
We recover two non-zero vectors $X'_1$ and $X'_2$ such that $f(X_1 Y_0^T)=X'_1 (Y_0')^T$ and $f(X_2Y_0^T)=X'_2 (Y_0')^T$.
Let now $(\alpha,\beta) \in \K^2$. Then $f((\alpha X_1+\beta X_2)Y_0^T)=(\alpha X'_1+\beta X'_2)(Y'_0)^T$
since $f$ is linear. We deduce that
$\alpha X'_1+\beta X'_2$ is orthogonal to $u(X)$ for every $X$ orthogonal to $\alpha X_1+\beta X_2$.
We then choose two linearly independent vectors $Z_1$ and $Z_2$ in $E$
such that $X_i^T\,Z_j=\delta_{i,j}$ for every $(i,j)\in \{1,2\}^2$, and let
$\lambda : \K \rightarrow \K$ denote the field automorphism associated to the semi-linear map $u$.
Then $\alpha X'_1+\beta X'_2$ is orthogonal to $u(\beta Z_1-\alpha Z_2)=\lambda(\beta)\,u(Z_1)-\lambda(\alpha)\,u(Z_2)$.
In particular, $X'_1 \bot u(Z_2)$ and $X'_2 \bot u(Z_1)$. Taking $\beta=1$ then shows that
$\alpha\,(X'_1)^T u(Z_1)=\lambda(\alpha)\,(X'_2)^T\,u(Z_2)$, and the special case $\alpha=1$ then yields:
$$\forall \alpha \in \K, \; (\alpha-\lambda(\alpha))(X'_1)^T\, u(Z_1)=0.$$
Notice finally that $X_1Y_0^T \not\in \calM^{\Vect(Z_1)}$ hence $X'_1(Y'_0)^T \not\in \calM^{\Vect(u(Z_1))}$
which shows that $(X'_1)^T\, u(Z_1) \neq 0$. We deduce that $\lambda=\id_\K$.
\end{proof}

Denote then by $P$ the non-singular matrix of $\Mat_n(\K)$ such that
$\varphi(X)=PX$ for every $X \in E$. Then the map
$f' : M \mapsto P^T\,f(M)$ satisfies all the assumptions of Theorem \ref{strong} with the additional property:
\begin{center}
For every $D \in \Pgros(E)$, one has $f'(V \cap \calM^D)=f'(V) \cap \calM^D$.
\end{center}

We may now conclude this section by summing up the above results, still assuming Lemma
\ref{hightechlemma} holds:

\begin{prop}
Let $V$ be a linear subspace of $\Mat_n(\K)$ such that $\codim V<n-1$ and $n \geq 3$.
Let $f : V \hookrightarrow \Mat_n(\K)$ be a linear embedding such that
$$\forall M \in V, \quad f(M) \in \GL_n(\K) \Leftrightarrow M \in \GL_n(\K).$$
Unless $(n,\codim V,\# \K)=(3,1,2)$, there are two Frobenius automorphisms\footnote{One
may even take $(u,V')=(\id_V,V)$ or $(u,V')=(X \mapsto X^T,V^T)$, whilst
$v : M \mapsto QM$ for some $Q \in \GL_n(\K)$.}
$u$ and $v$, together with a linear subspace $V'$ of $\Mat_n(\K)$ with $\dim V'=\dim V$,
and a linear embedding $f' : V' \hookrightarrow \Mat_n(\K)$ such that:
\begin{enumerate}[(i)]
\item $u(V)=V'$;
\item $f=v \circ f' \circ u_{|V}$;
\item for every $M \in V'$, one has $\im f'(M)=\im M$.
\end{enumerate}
\end{prop}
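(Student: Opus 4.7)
The plan is to consolidate the material built up throughout Section \ref{red1} into the structured statement of the proposition. The mathematical content is already present; what remains is bookkeeping, a case split, and a translation from a statement about preimages of $\calM^D$ to the pointwise image condition.

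First, combining Claim \ref{basiclaim2} with Lemma \ref{hightechlemma} rules out alternative~(iii) in the trichotomy, which gives Claim \ref{inverseimageclaim}: for every $D \in \Pgros(E)$, each of $f^{-1}(\calM_D)$ and $f^{-1}(\calM^D)$ has the form $V\cap\calM_{D'}$ or $V\cap\calM^{D'}$ for a unique $D'$, and exactly one of the two occurs. I would then make a dichotomy: either some pair $(D_1,D_1')$ already satisfies $f^{-1}(\calM^{D_1})=V\cap\calM^{D_1'}$, in which case we may take $u:=\id$ and $V':=V$; or this fails, and Claim \ref{inverseimageclaim} forces $f^{-1}(\calM_{D_1})=V\cap\calM^{D_1'}$ for some pair, in which case the pre-composition $\hat f:N\mapsto f(N^T)$ defined on $V^T$ puts us back in the first situation, and the associated choice is $u:X\mapsto X^T$, $V':=V^T$. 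This is the origin of the two alternatives in the footnote.

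In the reduced setting, Claims \ref{coherenceclaim1} and \ref{coherenceclaim2} applied to $f$ and to $f^{-1}:f(V)\hookrightarrow\Mat_n(\K)$ yield bijections $\varphi,\psi:\Pgros(E)\to\Pgros(E)$ with $f(V\cap\calM^D)=f(V)\cap\calM^{\varphi(D)}$ and $f(V\cap\calM_D)=f(V)\cap\calM_{\psi(D)}$; the second relation follows from the first because the alternative $f(V\cap\calM_D)=f(V)\cap\calM^{D'}$ would force $V\cap\calM_D=V\cap\calM_{\varphi^{-1}(D')}$ and contradict the uniqueness clause of Theorem \ref{atklloydtheorem}. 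The final claim of Section \ref{red1} then shows that $\varphi$ is a genuine projective automorphism (the fundamental theorem of projective geometry yields semi-linearity, and the computation at the end of the excerpt kills the associated field automorphism), so $\varphi(D)=PD$ for some $P\in\GL_n(\K)$.

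To finish, I would set $f'(M):=P^T f(M)$ and $v(N):=P^{-T}N$, a Frobenius automorphism. Then $f=v\circ f'$, and the translate of $f(V\cap\calM^D)=f(V)\cap\calM^{\varphi(D)}$ is $f'(V\cap\calM^D)=f'(V)\cap\calM^D$ for every $D\in\Pgros(E)$. Since any proper subspace of $E$ equals the intersection of the hyperplanes $D^\bot$ that contain it, this equality translates pointwise into $\im f'(M)=\im M$, which is property~(iii). The genuine obstacle here is Lemma \ref{hightechlemma}, whose proof is deferred to Section \ref{techlemma}; once it is in hand, everything else in the proposition is organized collation of results already proved in Section \ref{red1}.
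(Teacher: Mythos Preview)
Your proposal is correct and essentially reproduces the paper's own argument: the proposition is presented there as a summary of Section~\ref{red1}, and you have faithfully collated Claims~\ref{basiclaim2}--\ref{coherenceclaim2}, Lemma~\ref{hightechlemma}, and the projectivity claim into the stated form, including the translation from $f'(V\cap\calM^D)=f'(V)\cap\calM^D$ to the pointwise image equality.

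One small slip in the dichotomy: when the first alternative fails, the direct consequence (via the $\calM^D$ analogue of Claim~\ref{inverseimageclaim}) is $f^{-1}(\calM^{D_1})=V\cap\calM_{D_1'}$, and it is \emph{this} relation that makes your pre-composition $\hat f(N)=f(N^T)$ land back in the first situation, since $\hat f^{-1}(\calM^{D_1})=(f^{-1}(\calM^{D_1}))^T=V^T\cap\calM^{D_1'}$. The relation you actually wrote, $f^{-1}(\calM_{D_1})=V\cap\calM^{D_1'}$, is what the paper uses, but the paper pairs it with the \emph{post}-composition $M\mapsto f(M)^T$ (keeping the domain $V$); with your pre-composition it does not directly give what you need. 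Either route is fine once the indices are matched correctly.
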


\subsection{Image-preserving linear embeddings}\label{kerandim}

We will now prove the following result, which completes the proof of Theorem \ref{strong}
\emph{modulo} the proof of Lemma \ref{hightechlemma}.

\begin{prop}\label{improp}
Let $V$ be a linear subspace of $\Mat_n(\K)$ such that $\codim V<n-1$.
Let $f : V \hookrightarrow \Mat_n(\K)$ be a linear embedding such that
$\im f(M)=\im M$ for every $M \in V$.
Then $f$ coincides with $u_{I_n,Q}$ on $V$ for some $Q \in \GL_n(\K)$.
\end{prop}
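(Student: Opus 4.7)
My plan is to reduce the proposition to a direct application of the representation lemma (Theorem 8 of \cite{dSPclass}) and then upgrade the resulting matrix from $\Mat_n(\K)$ to $\GL_n(\K)$ by testing on a nonsingular element of $V$. First I would note that the hypothesis $\im f(M)=\im M$ for every $M\in V$ contains in particular the inclusion $\im f(M)\subseteq \im M$, equivalently: for every $D\in\Pgros(E)$, $f(V\cap\calM^D)\subseteq\calM^D$. This is precisely the sort of image-constraint data that the representation lemma handles, provided $\codim V$ is sufficiently small.

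Next, I would invoke Theorem 8 of \cite{dSPclass} with the codimension bound $\codim V<n-1$ to produce a matrix $Q\in\Mat_n(\K)$ such that $f(M)=MQ$ for every $M\in V$. Should the representation lemma be phrased in the dual, kernel-preserving form rather than the image-preserving one, the same conclusion is reached by applying it to $M\mapsto f(M^T)^T$ defined on $V^T$, which yields $f(M^T)^T=PM^T$ for some $P\in\Mat_n(\K)$ and hence $f(M)=MP^T$; either way, one arrives at the form $f(M)=MQ$.

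The remaining step is to upgrade $Q$ to an element of $\GL_n(\K)$. The maximal singular subspaces of $\Mat_n(\K)$ are the $\calM_D$ and $\calM^D$, all of codimension $n$, so every singular subspace of $\Mat_n(\K)$ has codimension at least $n$; since $\codim V<n-1<n$, the subspace $V$ must contain some $M_0\in\GL_n(\K)$ (this could alternatively be invoked through Corollary 6 of \cite{dSPclass}, recalled in the introduction). Then
$$\im(M_0Q)=\im f(M_0)=\im M_0=\K^n,$$
so $M_0Q$ is surjective, and because $M_0$ is invertible this forces $Q$ itself to be surjective and hence invertible. Therefore $f$ coincides with $u_{I_n,Q}$ on $V$.

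The main obstacle is not in this reduction, which is short and essentially formal, but in the content concealed inside the representation lemma; the codimension bound $\codim V<n-1$ appearing in the proposition is exactly the hypothesis one expects from that lemma. If any subtlety arises beyond its black-box use, it will be in matching the precise statement of Theorem 8 of \cite{dSPclass} to the present hypothesis, in which case the transpose trick described above provides the necessary flexibility.
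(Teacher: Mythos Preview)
Your proposal is correct and follows essentially the same route as the paper: apply the representation lemma to obtain $Q\in\Mat_n(\K)$ with $f(M)=MQ$, then upgrade $Q$ to $\GL_n(\K)$. The only minor difference is in that upgrade step: the paper argues from injectivity of $f$ (if $\im Q\subsetneq\K^n$, the space of matrices vanishing on $\im Q$ has dimension at least $n$, hence meets $V$ nontrivially, contradicting injectivity), whereas you use the equality $\im f(M_0)=\im M_0$ on a nonsingular $M_0\in V$; both are short and valid.
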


This is direct consequence of the following lemma, which was recently proven in \cite{dSPclass}
(Theorem 8):

\begin{lemme}[Representation lemma]\label{reprtheorem}
Let $(n,p,r)\in \N^3$. Let $V$ be a linear subspace of
$\Mat_{n,r}(\K)$ such that $\dim V \geq nr-n+2$. Let
$\varphi : V \rightarrow \Mat_{n,p}(\K)$ be a linear map such that
$\im \varphi(M) \subset \im M$ for every $M \in V$. \\
Then there exists $C \in \Mat_{r,p}(\K)$ such that
$\forall M \in V, \; \varphi(M)=MC$.
\end{lemme}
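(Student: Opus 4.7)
The plan is to first reduce the statement to the case $p = 1$, and then argue row by row. Applying the $p = 1$ statement to each of the maps $M \mapsto \varphi(M)\,e_j$ for $j \in \lcro 1, p \rcro$ produces vectors $c_1, \dots, c_p \in \K^r$ with $\varphi(M)\,e_j = M c_j$, and concatenating them yields a matrix $C \in \Mat_{r,p}(\K)$ satisfying $\varphi(M) = MC$. We may thus restrict ourselves to the case $p = 1$ and a linear map $\psi : V \to \K^n$ such that $\psi(M) \in \im M$ for every $M \in V$.

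For the first stage, I would proceed one row at a time. Fix $k \in \lcro 1, n \rcro$: whenever $M \in V$ has zero $k$-th row, its image $\im M$ lies in $e_k^\bot$, so $\psi(M)_k = 0$. Hence the linear form $\ell_k : M \mapsto \psi(M)_k$ on $V$ vanishes on $V \cap H_k$, where $H_k \subset \Mat_{n,r}(\K)$ is the codimension-$r$ subspace $\{M : M_{k,1} = \cdots = M_{k,r} = 0\}$ cut out by the $r$ coordinate forms $M \mapsto M_{k,j}$. A standard duality argument then yields scalars $c^{(k)}_1, \dots, c^{(k)}_r \in \K$ with $\psi(M)_k = (M c^{(k)})_k$ for every $M \in V$.

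The heart of the proof is the second stage: using the dimension hypothesis $\dim V \geq nr - n + 2$ to show that the vectors $c^{(1)}, \dots, c^{(n)}$ may be chosen to coincide in a single $c \in \K^r$. A natural reformulation is to consider the augmented linear subspace $\widetilde V := \bigl\{[M \mid \psi(M)] : M \in V\bigr\} \subset \Mat_{n, r+1}(\K)$. The hypothesis $\psi(M) \in \im M$ says precisely that every matrix of $\widetilde V$ has its last column in the span of its first $r$, so admits a right-kernel vector with non-zero last entry; the conclusion $\psi(M) = Mc$ is exactly the existence of a \emph{common} right-kernel vector of the form $(-c^T, 1)^T \in \K^{r+1}$ for all of $\widetilde V$. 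Since $\widetilde V$ has codimension at most $2n - 2$ in $\Mat_{n, r+1}(\K)$, one hopes to extract such a common kernel vector by a Flanders-type or Atkinson--Lloyd-type argument, in the spirit of Theorem~\ref{atklloydtheorem}.

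The main obstacle is completing this second stage uniformly in $n$ and $r$. When $n > r$, every matrix of $\widetilde V$ has rank at most $r < \min(n, r+1)$, so $\widetilde V$ is a singular subspace of $\Mat_{n, r+1}(\K)$ and the Atkinson--Lloyd classification applies in principle, modulo its rectangular extension. When $n \leq r$, however (including the case $n = r$ needed for Proposition~\ref{improp}), matrices of $\widetilde V$ may have full row rank and no singularity condition is available, so an alternative argument is required: I would either induct on $r$ by removing a carefully chosen column of $\Mat_{n, r}(\K)$ so that the dimension bound is preserved, or exploit the rank-one matrices $xy^T \in V$, which exist in profusion thanks to the large dimension of $V$, in order to equate $c^{(k)}$ and $c^{(l)}$ pairwise. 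A further delicacy is to guarantee that the common right-kernel vector eventually produced has non-zero last entry, since the alternative would give a non-trivial element of $\K^r$ annihilating $V$ on the right rather than the desired representation.
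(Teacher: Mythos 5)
Your first stage is fine but carries essentially no weight: the reduction to $p=1$ is immediate, and the row-by-row duality argument (each form $M \mapsto \psi(M)_k$ vanishes on the matrices of $V$ with zero $k$-th row, hence is a combination of the entry forms $M \mapsto m_{k,j}$) is valid for \emph{any} subspace $V$, with no use of the hypothesis $\dim V \geq nr-n+2$. The entire content of the lemma is the gluing step — showing that the row-dependent vectors $c^{(1)},\dots,c^{(n)}$ (each only defined modulo the annihilator of the corresponding row-projection of $V$) can be chosen equal — and this is exactly the point where your proposal stops being a proof and becomes a list of hopes. You yourself flag the unresolved obstacles: Theorem \ref{atklloydtheorem} as stated in this paper concerns square singular subspaces of $\Mat_n(\K)$, not rectangular spaces such as $\widetilde V \subset \Mat_{n,r+1}(\K)$; when $n \leq r$ (which includes the case $p=r=n$ actually needed for Proposition \ref{improp}) the matrices of $\widetilde V$ need not be rank-deficient at all, so no classification of singular or bounded-rank spaces applies directly; and even if a common kernel vector were produced, one must still rule out that its last coordinate vanishes. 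None of the suggested fallbacks (induction on $r$ by deleting a column, or pairwise identification of the $c^{(k)}$ via rank-one matrices of $V$) is carried out, so the proof has a genuine gap precisely where the dimension hypothesis must enter.

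There is also a structural issue with the route you propose. The paper does not prove this lemma internally: it imports it as Theorem 8 of \cite{dSPclass}, where it serves as a tool in establishing the very classification of large spaces of bounded rank that generalizes Atkinson--Lloyd. Deducing the representation lemma from an Atkinson--Lloyd-type classification of the augmented space $\widetilde V$ therefore risks circularity, besides requiring a rectangular version of that classification which is not available in the present paper. A workable self-contained argument would more likely proceed directly from the abundance of rank-one matrices in $V$ (for every nonzero $y \in \K^r$ the conditions $xy^T \in V$ impose at most $n-2$ linear constraints on $x$, so $\varphi(xy^T)=x\,z(y)^T$ for a two-dimensional family of $x$'s, forcing $z$ to depend only on $y$ and linearly so), followed by an argument that this rank-one information, together with the codimension bound, determines $\varphi$ on all of $V$; but as written, your proposal establishes neither this nor any alternative completion of the second stage.
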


\begin{proof}[Proof of Proposition \ref{improp}]
Applying Lemma \ref{reprtheorem} to $V$ and $f$ (with $p=r=n$), we find a matrix $Q \in \Mat_n(\K)$
such that $\forall M \in V, \; f(M)=MQ$. \\
Since $f$ is one-to-one, we deduce that $V$ contains no non-zero matrix which vanishes on $\im Q$.
If $\im Q \subsetneq \K^n$, this would yield $\codim_{\Mat_n(\K)} V \geq n$,  contradicting our assumptions. Therefore $Q$ is non-singular
and $f=u_{I_n,Q}$.
\end{proof}

\section{A (very) technical lemma}\label{techlemma}

This entire section is devoted to the proof of Lemma \ref{hightechlemma}, which is the last
obstacle for proving Theorem \ref{strong}. In the whole proof, we denote by $(e_1,\dots,e_n)$ the canonical basis of $E=\K^n$.

\subsection{Starting the proof}

We use a \emph{reductio ad absurdum}. Let $V$ and $g$ be as in Lemma \ref{hightechlemma},
and assume that there is a line $D$ such that $g^{-1}(\calM_D)$ is equivalent to $\calR(1,n-2)$
(notice that the case where $g^{-1}(\calM_D)$ is equivalent to $\calR(n-2,1)$ may be reduced to this one by pre-composing $g$
with $M \mapsto M^T$). By composing $g$ with a well-chosen Frobenius automorphism, we may also
assume that $D=\Vect(e_n)$. We finally lose no generality assuming that
\begin{equation}\label{H0}
g^{-1}(\calM_D)=\calR(1,n-2).
\end{equation}
To make things clearer, those assumptions mean that:
$V$ contains every matrix of the form
$M=\begin{bmatrix}
? & a & b \\
N & 0 & 0
\end{bmatrix}$ with $(a,b)\in \K^2$ and $N \in \Mat_{n-1,n-2}(\K)$, for such a matrix $M$ we always have
$g(M)=\begin{bmatrix}
N' & 0 \\
\end{bmatrix}$ for some $N'\in \Mat_{n,n-1}(\K)$, and $\calR(1,n-2)$ is precisely the set of matrices
in $V$ whose images by $g$ have $0$ as last column.

\begin{Not}
In the rest of the proof, we set $V':=\calR(0,n-2)$, i.e.\ $V'$ is the set of all matrices of $\Mat_n(\K)$ with
zero as $(n-1)$-th and $n$-th column.
\end{Not}

We will first investigate the structure of $g(V \cap \calM_{D_1})$ for an arbitrary line $D_1 \subset \Vect(e_{n-1},e_n)$.

\subsection{Sorting out the structure of $g(V \cap \calM_{D_1})$ (I)}\label{sort1}

\begin{Not}
For an arbitrary line $D_1 \subset \Vect(e_{n-1},e_n)$, we set
$$H_{D_1}:=g(V \cap \calM_{D_1}).$$
\end{Not}

\begin{claim}\label{different}
Let $D_1$ and $D_2$ be distinct lines in $\Pgros(\Vect(e_{n-1},e_n))$.
Then $H_{D_1} \neq H_{D_2}$.
\end{claim}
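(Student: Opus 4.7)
The approach is to use the injectivity of $g$ to transport the desired inequality $H_{D_1} \neq H_{D_2}$ down to the level of preimages in $V$, and then to produce a single explicit witness matrix inside $\calR(1,n-2) \subset V$. Because $g : V \hookrightarrow \Mat_n(\K)$ is one-to-one, its restriction is a bijection from $V$ onto $g(V)$, so for any two subsets $S_1, S_2 \subset V$ one has $g(S_1) = g(S_2) \Leftrightarrow S_1 = S_2$. Applied to $S_i := V \cap \calM_{D_i}$, this reduces the claim to showing $V \cap \calM_{D_1} \neq V \cap \calM_{D_2}$.

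For this, write $D_1 = \Vect(\alpha\, e_{n-1} + \beta\, e_n)$ and $D_2 = \Vect(\gamma\, e_{n-1} + \delta\, e_n)$; since $D_1 \neq D_2$, we have $\alpha\delta - \beta\gamma \neq 0$. Using the standing assumption (\ref{H0}) that $\calR(1,n-2) \subset V$, I consider the matrix $M \in V$ whose first row is $(0, \dots, 0, \beta, -\alpha)$ and whose other $n-1$ rows are zero. A one-line computation then gives
$$M(\alpha\, e_{n-1} + \beta\, e_n) = 0 \quad \text{and} \quad M(\gamma\, e_{n-1} + \delta\, e_n) = (\beta\gamma - \alpha\delta)\, e_1 \neq 0,$$
so $M \in V \cap \calM_{D_1}$ but $M \notin V \cap \calM_{D_2}$, completing the proof.

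I do not foresee any serious obstacle here: the argument boils down to the single observation that prescribing the entries in positions $(1,n-1)$ and $(1,n)$ of a matrix of $\calR(1,n-2)$ already gives enough freedom to annihilate any chosen line of $\Vect(e_{n-1}, e_n)$ while detecting any other, and the injectivity of $g$ then propagates this distinguishing ability up to the images $H_{D_i}$. In particular, neither the codimension hypothesis on $V$ nor the exclusion of the exceptional case $(n,\#\K)=(3,2)$ is needed at this step.
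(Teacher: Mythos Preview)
Your proof is correct and follows essentially the same approach as the paper: both reduce, via the injectivity of $g$, to showing $V \cap \calM_{D_1} \neq V \cap \calM_{D_2}$, and both exploit the inclusion $\calR(1,n-2) \subset V$ to do so. The only cosmetic difference is that the paper uses a codimension count (observing that $V \cap \calM_{D_1} \cap \calM_{D_2}$ has codimension at least $2n$ while $V \cap \calM_{D_1}$ has codimension less than $2n$), whereas you produce an explicit witness matrix; the underlying idea is identical.
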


\begin{proof}
Notice indeed that $V'=V \cap \calM_{D_1} \cap \calM_{D_2}$ has a codimension greater than or equal to $2n$ in $\Mat_n(\K)$.
On the other hand, we know from the inclusion $\calR(1,n-2) \subset V$ that $V \cap \calM_{D_1}$ has a codimension lesser than
$2n$ in $\Mat_n(\K)$,
which shows $V \cap \calM_{D_1} \cap \calM_{D_2} \neq V \cap \calM_{D_1}$ and proves our claim since $g$ is one-to-one.
\end{proof}

\begin{claim}\label{impossclaim1}
Let $D_1$ be a line of $\Vect(e_{n-1},e_n)$. Then there is no line $D'_1$ such that
$H_{D_1} \subset \calM_{D'_1}$ or $H_{D_1} \subset \calM^{D'_1}$.
\end{claim}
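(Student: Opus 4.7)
The approach is by contradiction: assume there exists a line $D'_1 \in \Pgros(E)$ such that $H_{D_1} \subset \calM_{D'_1}$ or $H_{D_1} \subset \calM^{D'_1}$. The entire argument turns on the precise dimension of $H_{D_1} \cap \calM_D$. Writing $D_1 = \Vect(\alpha e_{n-1} + \beta e_n)$, the kernel condition on a matrix $\begin{bmatrix}? & a & b \\ N & 0 & 0\end{bmatrix}$ of $\calR(1,n-2)$ reduces to the single scalar equation $\alpha a + \beta b = 0$, so $\dim(\calR(1, n-2) \cap \calM_{D_1}) = n^2 - 2n + 1$. Since $g$ is injective and $g(\calR(1, n-2)) = g(V) \cap \calM_D$, one obtains
\[
H_{D_1} \cap \calM_D \;=\; g\bigl(\calR(1, n-2) \cap \calM_{D_1}\bigr),\qquad \text{of dimension } n^2 - 2n + 1.
\]
Combined with the codimension bound $\codim_{\Mat_n(\K)}(V \cap \calM_{D_1}) \leq 2n - 2$, which yields $\dim H_{D_1} \geq n^2 - 2n + 2$, this shows that $H_{D_1} \cap \calM_D$ is a proper subspace of $H_{D_1}$.

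The first case, $H_{D_1} \subset \calM_{D'_1}$, is ruled out by a direct dimension count. If $D'_1 = D$, then $V \cap \calM_{D_1} \subset g^{-1}(\calM_D) = \calR(1, n-2)$, forcing $\dim V \cap \calM_{D_1} \leq n^2 - 2n + 1$, contradicting the lower bound above. If $D'_1 \neq D$, then $H_{D_1} \cap \calM_D \subset \calM_{D'_1} \cap \calM_D$, and the latter has dimension $n(n-2)$ (matrices there must annihilate the 2-dimensional subspace $D + D'_1$), contradicting $\dim H_{D_1} \cap \calM_D = n^2 - 2n + 1$.

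The second case, $H_{D_1} \subset \calM^{D'_1}$, is more delicate because $\dim(\calM^{D'_1} \cap \calM_D) = n^2 - 2n + 1$ as well, so the dimension bookkeeping only forces the rigid equality $H_{D_1} \cap \calM_D = \calM^{D'_1} \cap \calM_D$. My strategy is to exploit the subspace $V' = \calR(0, n-2) \subset V$ of dimension $n(n-2)$, which lies inside $V \cap \calM_{D_2}$ for every line $D_2 \subset \Vect(e_{n-1}, e_n)$ and satisfies $g(V') \subset \calM_D$. Assuming the containment also holds for a further line $D_2 \neq D_1$ with an associated line $D'_2$, two subcases arise. If $D'_2 \neq D'_1$, then $g(V') \subset \calM^{D'_1} \cap \calM^{D'_2} \cap \calM_D$, a subspace of dimension $(n-1)(n-2) < n(n-2)$ for $n \geq 3$, contradiction. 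If $D'_2 = D'_1$, one instead obtains $g\bigl((V \cap \calM_{D_1}) + (V \cap \calM_{D_2})\bigr) \subset \calM^{D'_1}$, and by iterating over enough lines $D_i \subset \Vect(e_{n-1}, e_n)$ (of which there are $\#\K + 1$, plentiful outside the outlawed case $(n,\#\K)=(3,2)$), the left-hand side eventually exceeds $\dim \calM^{D'_1} = n^2 - n$.

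The main obstacle is the residual case in which $H_{D_1} \subset \calM^{D'_1}$ holds for a \emph{single} exceptional line $D_1$, all other $H_{D_2}$ (for $D_2 \neq D_1$ in $\Pgros(\Vect(e_{n-1}, e_n))$) then necessarily falling into Atkinson--Lloyd case (iii) of Theorem \ref{atklloydtheorem}. Resolving this requires extracting, from the explicit $\calR(n-2,1)$ or $\calR(1,n-2)$ equivalences satisfied by those $H_{D_2}$'s, fine structural constraints on $g$ restricted to each $V \cap \calM_{D_2}$, and then showing these are incompatible with the rigid identity $g\bigl(\calR(1, n-2) \cap \calM_{D_1}\bigr) = \calM^{D'_1} \cap \calM_D$ for the exceptional $D_1$, via the injectivity of $g$ on all of $V$.
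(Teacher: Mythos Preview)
Your treatment of the first case ($H_{D_1}\subset\calM_{D'_1}$) is correct and matches the paper's argument.

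In the second case, your sub-case $D'_2=D'_1$ is unnecessary: the subspaces $H_{D_1}$ and $H_{D_2}$ are \emph{maximal} singular subspaces of $g(V)$ (Claim~\ref{basicclaim} applied to $g^{-1}$), so if both were contained in $\calM^{D'_1}$ then both would equal $g(V)\cap\calM^{D'_1}$, contradicting Claim~\ref{different}. This is how the paper dispatches that sub-case in one line; your iteration argument over additional lines $D_i$ is vague (for each new $D_i$ you again face the trichotomy, so the iteration does not terminate cleanly) and can be dropped entirely.

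The genuine gap is your ``residual case'': you correctly identify it as the main obstacle, but your last paragraph is a description of what would have to be done, not a proof. The paper actually carries this out. When $H_{D_2}$ is equivalent to $\calR(1,n-2)$, there is a $2$-plane $P$ with $\dim H_{D_2}x\leq 1$ for all $x\in P$; picking $x\in P\cap\Vect(e_1,\dots,e_{n-1})\setminus\{0\}$ forces
\[
\codim\bigl(H_{D_2}\cap\calM^{D'_1}\cap\calM_D\bigr)\;\geq\;(n-2)+(n-1)+n\;=\;3n-3,
\]
whereas this intersection equals $g(V')$ and hence has codimension $2n$. That is a contradiction for $n>3$ (and similarly for $\calR(n-2,1)$). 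The case $n=3$ then requires a separate, delicate argument: one shows that every $H_{D_3}$ with $D_3\neq D_1$ must actually equal $\calR(1,1)$ itself, again contradicting Claim~\ref{different}. None of this structural work is present in your proposal, and the vague reference to ``fine structural constraints'' and ``incompatibility via injectivity'' does not substitute for it.
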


\begin{proof}
Assumption \eqref{H0}, together with the conclusions of the present claim, are unchanged should we choose $P \in \GL_n(\K)$
which leaves $\Vect(e_1,\dots,e_{n-2})$ and $\Vect(e_{n-1},e_n)$ invariant
and replace $V$ and $g$ respectively with $VP^{-1}$ and $g \circ u_{I_n,P}$.
Therefore we lose no generality assuming that $D_1=\Vect(e_n)$. In this case, we use
a \emph{reductio ad absurdum} and assume there is a line $D'_1$ such that
$H_{D_1} \subset \calM_{D'_1}$ or
$H_{D_1} \subset \calM^{D'_1}$. \\
Assume first that
$H_{D_1} \subset \calM_{D'_1}$. If $D'_1=D$, then $V \cap \calM_{D_1} \subset g^{-1}(\calM_D)=\calR(1,n-2)$,
and we deduce that every matrix of $V \cap \calM_{D_1}$ has zero as last column and, starting from the second one,
all its entries on the $(n-1)$-th last column are zero, therefore
$\codim_{\Mat_n(\K)} (V \cap \calM_{D_1}) \geq 2n-1$, which contradicts the assumption $\codim_{\Mat_n(\K)} V \leq n-2$.
Therefore $D'_1 \neq D$, which yields $\dim (\calM_D \cap \calM_{D'_1})=n(n-2)$,
and it follows that $\dim\bigl(g(V) \cap \calM_D \cap \calM_{D'_1}\bigr) \leq  n(n-2)$.
However $V \cap \calR(1,n-2) \cap \calM_{D_1}=\calR(1,n-2) \cap \calM_{D_1}$ has dimension $n(n-2)+1$, which contradicts the fact that $g$ is one-to-one. \\
We deduce that $H_{D_1} \subset \calM^{D'_1}$, and we lose no generality assuming that $D'_1=\Vect(e_n)$.
In particular, any matrix of $V'$ is mapped by $g$ to
$\begin{bmatrix}
N & 0 \\
0 & 0
\end{bmatrix}$ for some $N \in \Mat_{n-1}(\K)$. \\
Pick now a second line $D_2 \subset \Vect(e_{n-1},e_n)$
(different from $D_1$) and consider the subspace $g(V \cap \calM_{D_2})$:
by the previous line of reasoning, it may not be included in any $\calM_{D'_2}$.
Assume it is included in $\calM^{D'_2}$ for some line $D'_2 \subset E$.
Then $D'_2$ must be different from $D'_1$ by Claim \ref{different}. Then, since $\calR(1,n-2) \subset V$, one has
$$g(V')=g(\calM_{D_1} \cap \calM_{D_2} \cap \calR(1,n-2))
\subset \calM^{D'_1} \cap \calM^{D'_2} \cap \calM_D$$
which shows that $\codim g(V') \geq n+2(n-1)>2n$, contradicting $\codim V'=2n$. \\
Now, we may apply Claim \ref{basiclaim2} to the map $g^{-1}$, and deduce that $H_{D_2}$ is equivalent either to $\calR(1,n-2)$ or $\calR(n-2,1)$. \\
Assume first that $H_{D_2}$ is equivalent to $\calR(1,n-2)$.
Then there is a $2$-dimensional subspace $P$ of $E$ such that, for every
$x \in P$, one has $\dim H_{D_2}x \leq 1$ (where $H_{D_2}x=\bigl\{Mx \mid M \in H_{D_2}\bigr\}$).
We may then choose a non-zero vector $x$ in $P \cap \Vect(e_1,\dots,e_{n-1})$, which shows that
$$\codim(H_{D_2} \cap \calM^{D'_1} \cap \calM_D) \geq (n-2)+(n-1)+n=3n-3.$$
However, since $\calR(1,n-2) \subset V$,
$$H_{D_2} \cap \calM^{D'_1} \cap \calM_D=g(\calM_{D_2} \cap \calM_{D_1} \cap \calR(1,n-2))=g(V')$$
hence $H_{D_2} \cap \calM^{D'_1} \cap \calM_D$ has codimension $2n$ in $\Mat_n(\K)$.
Notice that a similar line of reasoning holds in the case $H_{D_2}$ is equivalent to $\calR(n-2,1)$,
so this yields a contradiction if $n>3$. \\
Assume finally that $n=3$. In this case, we lose no generality assuming that $e_2$ belongs to
$P$. Then $H_{D_2}e_2=\Vect(y)$ for some $y \in \K^3 \setminus \{0\}$, and
$H_{D_2}$ contains the $3$-dimensional space $\calZ$ of all matrices $M \in \Mat_3(\K)$ such that $\im(M) \subset \Vect(y)$.
However $g(V') \subset H_{D_1} \subset \calM^{\Vect(e_3)}$.
If $\Vect(y)=\Vect(e_3)$, then we would have $V' \cap g^{-1}(\calZ)=\{0\}$, which is not possible since
$V'$ and $g^{-1}(\calZ)$ are both $3$-dimensional subspaces of the $5$-dimensional space $V \cap \calM_{D_2}$. \\
Therefore we lose no generality assuming that $H_{D_2}e_2=\Vect(e_1)$, in which case we find that $g$ maps any matrix of the form
$\begin{bmatrix}
? & 0 & 0 \\
? & 0 & 0 \\
? & 0 & 0
\end{bmatrix}$ to a matrix of the form
$\begin{bmatrix}
? & ? & 0 \\
? & 0 & 0 \\
0 & 0 & 0
\end{bmatrix}$. Set
$$G:=\Biggl\{\begin{bmatrix}
a & b & 0 \\
c & 0 & 0 \\
0 & 0 & 0
\end{bmatrix} \mid (a,b,c)\in \K^3\Biggr\}.$$
Let $D_3 \in \Pgros(\Vect(e_2,e_3)) \setminus \{D_1\}$. Then $H_{D_3}$ contains
$g(V')=G$. However, the previous considerations apply with $D_2$ replaced by $D_3$, hence $H_{D_3}$ is equivalent to $\calR(1,1)$.
The fact that $G \subset H_{D_3}$ then yields $H_{D_3}=\calR(1,1)$:
indeed, $G$ has two obvious $2$-dimensional linear subspaces of rank $1$, their sum has rank $2$, so each one
must be contained in one and only one of the two $3$-dimensional rank $1$ linear subspaces of $H_{D_3}$, which
forces those subspaces to be
$$\Biggl\{\begin{bmatrix}
a & b & c \\
0 & 0 & 0 \\
0 & 0 & 0
\end{bmatrix} \mid (a,b,c)\in \K^3\biggr\} \quad \text{and} \quad
\Biggl\{\begin{bmatrix}
a & 0 & 0 \\
b & 0 & 0 \\
c & 0 & 0
\end{bmatrix} \mid (a,b,c)\in \K^3\Biggr\}.$$
Finally, if we choose $D_3$ different from $D_2$, then
we recover $H_{D_3}=\calR(1,1)=H_{D_2}$, contradicting Claim \ref{different}.
\end{proof}

\subsection{Sorting out the structure of $g(V \cap \calM_{D_1})$ (II)}

Applying Claim \ref{basiclaim2} to $g^{-1}$, we deduce from Claim \ref{impossclaim1}:

\begin{claim}\label{structureclaim}
For any line $D_1 \subset \Vect(e_{n-1},e_n)$, the subspace
$H_{D_1}$ is equivalent to $\calR(1,n-2)$ or to $\calR(n-2,1)$.
\end{claim}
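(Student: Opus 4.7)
The plan is to obtain the claim as an immediate consequence of the previous two claims, by applying the trichotomy of Claim \ref{basiclaim2} not to $g$ itself but to its inverse $g^{-1} : g(V) \hookrightarrow \Mat_n(\K)$. First I would check that $g^{-1}$ meets the hypotheses under which Claim \ref{basiclaim2} was established: it is a linear embedding, the source $g(V)$ has the same codimension $n-2$ as $V$ (hence strictly less than $n-1$), and since $g$ strongly preserves non-singularity one has $(g^{-1})^{-1}(\GL_n(\K)) = g(V) \cap \GL_n(\K)$. The excluded case $(n,\#\K) = (3,2)$ of Lemma \ref{hightechlemma} also rules out the sole exception in the underlying Theorem \ref{atklloydtheorem}, so Claim \ref{basiclaim2} applies without caveat.

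Next I would unpack the output of the claim for the line $D_1$: the preimage of $\calM_{D_1}$ under $g^{-1}$ is precisely $g(V \cap \calM_{D_1}) = H_{D_1}$. The trichotomy therefore asserts that one of the following holds:
\begin{enumerate}[(i)]
\item $H_{D_1} = g(V) \cap \calM_{D'}$ for a unique line $D'$;
\item $H_{D_1} = g(V) \cap \calM^{D'}$ for a unique line $D'$;
\item $H_{D_1}$ is equivalent to $\calR(n-2,1)$ or to $\calR(1,n-2)$.
\end{enumerate}

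Finally, cases (i) and (ii) each entail in particular that $H_{D_1}$ is contained in some $\calM_{D'}$ or in some $\calM^{D'}$, which is exactly what Claim \ref{impossclaim1} forbids. Hence only alternative (iii) can hold, which is the conclusion. There is no serious obstacle at this step — all the heavy lifting was done in proving Claim \ref{impossclaim1}, and the present argument is pure bookkeeping; the only thing worth double-checking is that Claim \ref{basiclaim2} is genuinely symmetric in $g$ and $g^{-1}$, which follows from the strong non-singularity-preserving hypothesis.
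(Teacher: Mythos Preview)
Your proposal is correct and matches the paper's approach exactly: the paper proves this claim in a single line, ``Applying Claim \ref{basiclaim2} to $g^{-1}$, we deduce from Claim \ref{impossclaim1},'' which is precisely the bookkeeping you have spelled out. Your verification that $g^{-1}$ satisfies the standing hypotheses (same codimension, strong preservation, exclusion of the exceptional case) is a welcome bit of care that the paper leaves implicit.
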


We now prove:

\begin{claim}\label{rightside}
Let $D_1 \subset \Vect(e_{n-1},e_n)$ be a line. Then
$H_{D_1}$ is equivalent to $\calR(1,n-2)$.
\end{claim}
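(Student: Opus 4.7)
The plan is to argue by contradiction starting from the alternatives of Claim~\ref{structureclaim}. Suppose $H_{D_1}$ is equivalent to $\calR(n-2,1)$ rather than to $\calR(1,n-2)$, so that $H_{D_1}=\{M\in\Mat_n(\K):M(\calH_1)\subset\calK_1\}$ for some hyperplane $\calH_1\subset E$ and some $(n-2)$-dimensional subspace $\calK_1\subset E$. Observe first that $n=3$ requires no work: there $\calR(1,n-2)=\calR(1,1)=\calR(n-2,1)$ literally coincide, collapsing the two alternatives in Claim~\ref{structureclaim}. Henceforth I assume $n\geq 4$.

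The idea is to pin a subspace of dimension $n^2-2n+1$ inside $H_{D_1}\cap\calM_D$, which will be too large for the assumed type-$\calR(n-2,1)$ structure to accommodate. Consider $S:=\calR(1,n-2)\cap\calM_{D_1}$. Writing $D_1=\Vect(\alpha e_{n-1}+\beta e_n)$ with $(\alpha,\beta)\neq(0,0)$, a matrix of $\calR(1,n-2)$ sends $e_{n-1}$ to $a\,e_1$ and $e_n$ to $b\,e_1$, so the constraint of killing $D_1$ reduces to the single equation $\alpha a+\beta b=0$. Hence $\dim S=\dim\calR(1,n-2)-1=n^2-2n+1$. Since $S\subset\calR(1,n-2)=g^{-1}(\calM_D)$ by~\eqref{H0}, one has $g(S)\subset\calM_D$; and since $S\subset V\cap\calM_{D_1}$, one has $g(S)\subset H_{D_1}$. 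Injectivity of $g$ then produces $\dim g(S)=n^2-2n+1$ and $g(S)\subset H_{D_1}\cap\calM_D$.

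To finish, I bound $\dim(H_{D_1}\cap\calM_D)$ from above under the type-$\calR(n-2,1)$ hypothesis. Adjoining $Me_n=0$ to the defining condition $M(\calH_1)\subset\calK_1$ removes $\dim\calK_1=n-2$ further dimensions when $e_n\in\calH_1$ (since $Me_n$ was already forced to lie in $\calK_1$) and $n$ further dimensions when $e_n\notin\calH_1$ (since $Me_n$ was free). Therefore $\dim(H_{D_1}\cap\calM_D)\leq(n^2-2n+2)-(n-2)=n^2-3n+4$, and combining with the lower bound forces $n^2-2n+1\leq n^2-3n+4$, i.e.\ $n\leq 3$, contradicting $n\geq 4$.

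The argument is essentially an elementary dimension count; the one delicate point is the case split on whether $e_n\in\calH_1$, since one must bound by the larger of the two resulting codimensions. That the two bounds become equalities exactly at $n=3$ is no accident but the very boundary where the orbits of $\calR(1,n-2)$ and $\calR(n-2,1)$ coincide, which makes the preliminary remark both necessary and sufficient at that value of $n$.
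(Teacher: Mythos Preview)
Your proof is correct and follows essentially the same strategy as the paper: both observe that $g(\calR(1,n-2)\cap\calM_{D_1})$ has dimension $n^2-2n+1$ yet lies in $H_{D_1}\cap\calM_D$, which under the $\calR(n-2,1)$ hypothesis has dimension at most $n^2-3n+4$, forcing $n\leq 3$. The paper first reduces to $D_1=\Vect(e_n)$ and replaces your case split on $e_n\in\calH_1$ by choosing an $(n-2)$-dimensional subspace $G'\subset\calH_1$ with $G'\cap\Vect(e_n)=\{0\}$, but this is a purely cosmetic difference.
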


\begin{proof}
This follows directly from the preceding claim when $n=3$. Assume now that $n \geq 4$.
As in the beginning of the proof of Claim \ref{impossclaim1}, we lose no generality assuming that $D_1=\Vect(e_n)$.
We use another \emph{reductio ad absurdum} by assuming that $H_{D_1}$ is equivalent to $\calR(n-2,1)$.
Then there is a unique linear subspace $F$ with codimension $2$ in $E$ such that
$\forall x \in E \setminus \{0\}, \; H_{D_1} x=F\; \text{or}\; H_{D_1}x=E$.
We lose no generality assuming
$F=\Vect(e_1,\dots,e_{n-2})$ (we may reduce the general case to this one by composing $g$
with $u_{P,I_n}$ for some well-chosen $P \in \GL_n(\K)$).
Then the set of vectors $x \in E$ such that $H_{D_1}x \subset F$
is a linear hyperplane $G$ of $E$. \\
We may then find a linear subspace $G' \subset G$ such that $G' \cap \Vect(e_n)=\{0\}$ and $\dim G'=n-2$.
It then easily follows that $g(\calM_{D_1} \cap \calR(1,n-2))$ has a codimension greater than or equal to
$n+2(n-2)$ in $\Mat_n(\K)$.  Since $\calM_{D_1} \cap \calR(1,n-2)$ has codimension $2n-1$ in $\Mat_n(\K)$
and $n>3$, this yields a contradiction.
\end{proof}

Using the definition of $\calR(1,n-2)$, we
deduce that there is a unique $2$-dimensional linear subspace $P_{D_1} \subset E$ such that
$\dim(H_{D_1}x)=1$ for every $x \in P_{D_1} \setminus \{0\}$, whilst $H_{D_1}x=E$ for every $x \in E \setminus P_{D_1}$; moreover
the line
$$D'_1:=H_{D_1}x$$
is independent from $x \in P_{D_1} \setminus \{0\}$
(indeed, given a pair $(P,Q)\in \GL_n(\K)^2$ such that $H_{D_1}=P\,\calR(1,n-2)\,Q$,
one simply has $P_{D_1}=Q^{-1}\Vect(e_{n-1},e_{n-2})$ and $D'_1=P\,\Vect(e_1)$).

\begin{claim}\label{sameplane}
The plane $P_{D_1}$ is independent from the choice of $D_1$, and it contains $e_n$.
\end{claim}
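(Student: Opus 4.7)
The plan is to first identify $g(V')$ with an intersection $H_{D_1} \cap H_{D_2}$, and then exploit this identification together with the extra constraint $g(V') \subset \calM_D$ to prove both assertions by the same dimension count. The central observation is that $\calM_{D_1} \cap \calM_{D_2}=\calM_{\Vect(e_{n-1},e_n)}$ for any two distinct lines $D_1,D_2 \subset \Vect(e_{n-1},e_n)$, which as a subset of $\Mat_n(\K)$ coincides with $V'=\calR(0,n-2)$. Since $V' \subset \calR(1,n-2) \subset V$, intersecting with $V$ changes nothing, and the injectivity of $g$ yields
$$g(V')=H_{D_1} \cap H_{D_2},$$
a subspace of dimension $n(n-2)$. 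Moreover $V' \subset \calR(1,n-2)=g^{-1}(\calM_D)$ immediately gives $g(V') \subset \calM_D$, so that $Me_n=0$ for every $M \in g(V')$.

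For the containment $e_n \in P_{D_1}$, I would argue by contradiction. If $e_n \notin P_{D_1}$, then the structure of $H_{D_1}$ recorded just before the claim yields $H_{D_1}e_n=E$, so the evaluation map $H_{D_1} \to E$, $M \mapsto Me_n$, is surjective with kernel of dimension $\dim H_{D_1}-n=n^2-3n+2$. The subspace $g(V')$ sits in this kernel, forcing $n(n-2) \leq n^2-3n+2$, i.e.\ $n \leq 2$, contradicting $n \geq 3$.

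For the independence of $P_{D_1}$ from $D_1$, I would again argue by contradiction. Suppose some pair of distinct lines $D_1,D_2 \subset \Vect(e_{n-1},e_n)$ had $P_{D_1} \neq P_{D_2}$. By the previous step both planes contain $e_n$, so $P_{D_1} \cap P_{D_2}=\Vect(e_n)$ and $P_{D_1}+P_{D_2}$ has dimension $3$. Pick $x \in P_{D_1} \setminus \Vect(e_n)$ and $y \in P_{D_2} \setminus \Vect(e_n)$; then $(e_n,x,y)$ is linearly independent. Writing $D'_i:=H_{D_i}z$ for $z \in P_{D_i}\setminus\{0\}$, the linear map
$$\Phi : \Mat_n(\K) \longrightarrow E \oplus (E/D'_1) \oplus (E/D'_2),\quad M \longmapsto \bigl(Me_n,\,Mx \bmod D'_1,\,My \bmod D'_2\bigr)$$
is then surjective, giving $\dim \Ker \Phi=n^2-n-2(n-1)=n^2-3n+2$. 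But $g(V') \subseteq \Ker \Phi$: $Me_n=0$ from $g(V') \subset \calM_D$, while $Mx \in D'_1$ follows from $g(V') \subset H_{D_1}$ and $x \in P_{D_1}$, and $My \in D'_2$ symmetrically. The same inequality $n(n-2) \leq n^2-3n+2$ again forces $n \leq 2$, a contradiction.

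The main conceptual obstacle is recognizing that both halves of the claim collapse to the same dimension inequality once $g(V')$ has been identified as $H_{D_1} \cap H_{D_2}$ and the constraint $g(V') \subset \calM_D$ is added on top; after that, everything reduces to the surjectivity of a single well-chosen evaluation map.
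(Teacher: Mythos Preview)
Your proof is correct and takes essentially the same approach as the paper's: both derive a contradiction by showing that $g(V')$, which has dimension $n(n-2)$, would be forced into a subspace of dimension at most $n^2-3n+2$ if the conclusion failed. The paper unifies your two steps into one by setting $F:=\Vect(e_n)+\sum_{D_1}P_{D_1}$ and arguing directly that $\dim F\leq 2$, but the underlying dimension count (via $Ye_n=0$ and $\dim(Yx),\dim(Yy)\leq 1$ for suitable $x,y$) is identical to yours.
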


\begin{proof}
Consider the linear subspace $F:=\Vect(e_n)+\underset{D_1 \in \Pgros(\Vect(e_{n-1},e_n))}{\sum}\,P_{D_1}$.
Assume $\dim F \geq 3$ and extend $e_n$ into a linearly independent triple $(e_n,x,y)$ in $F$.
Setting $Y:=g(V')$, we find that $Ye_n=0$, $\dim(Yx) \leq 1$ and $\dim(Yy) \leq 1$, hence
$\codim_{\Mat_n(\K)} Y \geq n+2(n-1)>2n$, which contradicts the fact that $\dim V'=n(n-2)$.
We deduce that $\dim F \leq 2$, which proves that all the planes $P_{D_1}$ are equal and contain $e_n$.
\end{proof}

We may now assume:
\begin{equation}\label{H1}
P_{D_1}=\Vect(e_{n-1},e_n) \quad \text{for every $D_1 \in \Pgros(\Vect(e_{n-1},e_n))$.}
\end{equation}
The situation is indeed unchanged should we replace $g$ with $u_{I_n,Q} \circ g$ for any $Q \in \GL_n(\K)$
such that $Qe_n=e_n$.

\begin{claim}
One has $g(V')=V'$.
\end{claim}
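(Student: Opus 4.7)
The plan is to express $V'$ as an intersection of two subspaces $V\cap \calM_{D_1}$ of the kind already studied, transfer this intersection through $g$ by injectivity, and then pin down the result using the explicit structure of the $H_{D_1}$ forced by assumption~\eqref{H1}.

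More precisely, I would first remark that $\calM_{\Vect(e_{n-1})} \cap \calM_{\Vect(e_n)} = \calR(0,n-2) = V'$, and since $V' \subset \calR(1,n-2) \subset V$, this gives
$$V' = V \cap \calM_{\Vect(e_{n-1})} \cap \calM_{\Vect(e_n)}.$$
Applying $g$, which is injective, one obtains
$$g(V') = H_{\Vect(e_{n-1})} \cap H_{\Vect(e_n)}.$$
So the problem reduces to computing this intersection.

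The key step is to determine each $H_{D_1}$ explicitly for $D_1 \subset \Vect(e_{n-1},e_n)$. Since $H_{D_1}$ is equivalent to $\calR(1,n-2)$ with $P_{D_1} = \Vect(e_{n-1},e_n)$ by assumption~\eqref{H1}, every $M \in H_{D_1}$ satisfies $Me_{n-1} \in D'_{D_1}$ and $Me_n \in D'_{D_1}$, so $H_{D_1}$ is contained in the space
$$W_{D'_{D_1}} := \{M \in \Mat_n(\K) : Me_{n-1} \in D'_{D_1} \text{ and } Me_n \in D'_{D_1}\},$$
which has dimension $n(n-2)+2 = n^2-2n+2$. On the other hand, a standard codimension estimate gives $\dim H_{D_1} = \dim(V \cap \calM_{D_1}) \geq \dim V + \dim \calM_{D_1} - n^2 = n^2-2n+2$, so equality $H_{D_1} = W_{D'_{D_1}}$ holds.

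With this identification in hand, the conclusion is immediate. Since $H_{D_1}$ depends only on $D'_{D_1}$ and Claim~\ref{different} guarantees $H_{\Vect(e_{n-1})} \neq H_{\Vect(e_n)}$, the two lines $D'_{\Vect(e_{n-1})}$ and $D'_{\Vect(e_n)}$ are distinct in $E$, so their intersection is $\{0\}$. Therefore
$$g(V') = H_{\Vect(e_{n-1})} \cap H_{\Vect(e_n)} = \bigl\{M : Me_{n-1} = Me_n = 0\bigr\} = \calR(0,n-2) = V'.$$
The only mildly delicate point is the dimension count pinning $H_{D_1}$ exactly to $W_{D'_{D_1}}$; once that is secured, everything collapses thanks to injectivity of $g$ and the uniqueness built into Claim~\ref{different}.
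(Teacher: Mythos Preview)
Your proof is correct and follows essentially the same approach as the paper: write $V'$ as $V\cap\calM_{D_1}\cap\calM_{D_2}$ for two distinct lines $D_1,D_2\subset\Vect(e_{n-1},e_n)$, push through $g$ by injectivity, and use $D'_1\neq D'_2$ to conclude. The paper's version is terser, leaving the identification $H_{D_1}=W_{D'_{D_1}}$ implicit and reading off the conclusion directly from $D'_1\neq D'_2$; your explicit computation of $H_{D_1}$ is a helpful unpacking of exactly what the paper is using. One small remark: your codimension estimate for $\dim H_{D_1}$ is unnecessary, since by Claim~\ref{rightside} $H_{D_1}$ is already known to be equivalent to $\calR(1,n-2)$, which fixes its dimension exactly.
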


\begin{proof}
Choose two arbitrary distinct lines $D_1$ and $D_2$ in $\Pgros(\Vect(e_{n-1},e_n))$,
and notice that $V'=V \cap \calM_{D_1} \cap \calM_{D_2}$ hence
$g(V')=H_{D_1} \cap H_{D_2}=V'$ since $D'_1 \neq D'_2$.
\end{proof}

\begin{claim}\label{planeclaim}
The sum $\calP$ of all lines $D'_1$, for $D_1$ in $\Pgros(\Vect(e_{n-1},e_n))$, is a $2$-dimensional subspace of $E$.
\end{claim}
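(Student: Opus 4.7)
My plan is to bound $\dim \calP$ above and below by $2$. The central structural ingredient, and the main obstacle, is the identification
\[
H_{D_1} \;=\; \{M \in \Mat_n(\K) : Me_{n-1} \in D'_{D_1} \text{ and } Me_n \in D'_{D_1}\}
\]
for every $D_1 \in \Pgros(\Vect(e_{n-1},e_n))$. The inclusion ``$\subset$'' is precisely the defining property of $P_{D_1} = \Vect(e_{n-1},e_n)$ and $D'_{D_1}$, while both sides have dimension $n^2-2n+2$ (the left-hand side because $H_{D_1}$ is equivalent to $\calR(1,n-2)$), which forces equality. Once this identification is in hand, the two bounds reduce to routine dimension counts.

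For the lower bound, I pick two distinct lines $D_1, D_2$ in $\Vect(e_{n-1},e_n)$ and note that $\calM_{D_1} \cap \calM_{D_2}$ is exactly the space of matrices annihilating $\Vect(e_{n-1},e_n)$, so $V \cap \calM_{D_1} \cap \calM_{D_2} = V'$ and hence $H_{D_1} \cap H_{D_2} = g(V') = V'$. The evaluation map $M \mapsto (Me_{n-1}, Me_n)$ then induces, thanks to the identification above, an isomorphism $H_{D_i}/V' \xrightarrow{\sim} D'_{D_i} \oplus D'_{D_i}$ (both sides of dimension $2$). The triviality of $(H_{D_1}/V') \cap (H_{D_2}/V')$ forces $D'_{D_1} \cap D'_{D_2} = \{0\}$, so $D'_{D_1}$ and $D'_{D_2}$ are distinct lines and $\dim \calP \geq 2$.

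For the upper bound, intersecting the identification of $H_{D_1}$ with $\calM_D$ yields
\[
H_{D_1} \cap \calM_D \;=\; \{M \in \Mat_n(\K) : Me_{n-1} \in D'_{D_1} \text{ and } Me_n = 0\},
\]
a subspace of dimension $n^2-2n+1$. Summing over $D_1$ and decomposing any $M$ with $Me_n = 0$ and $Me_{n-1} \in \calP$ via a basis of $\calP$ extracted from the lines $D'_{D_1}$, I obtain
\[
\{M \in \Mat_n(\K) : Me_{n-1} \in \calP \text{ and } Me_n = 0\} \;=\; \sum_{D_1} (H_{D_1} \cap \calM_D) \;\subset\; g(V) \cap \calM_D.
\]
Since $g(V) \cap \calM_D = g(\calR(1,n-2))$ has dimension $n^2-2n+2$ by \eqref{H0}, whereas the left-hand side has dimension $n(n-2) + \dim \calP$, this forces $\dim \calP \leq 2$, and combining with the lower bound gives $\dim \calP = 2$.
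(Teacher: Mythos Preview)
Your proof is correct and takes a somewhat different route from the paper's. Both arguments rest on the same structural fact---your explicit identification $H_{D_1}=\{M: Me_{n-1},Me_n\in D'_{D_1}\}$, which is indeed forced by $P_{D_1}=\Vect(e_{n-1},e_n)$ and a dimension count---but the paper exploits it by choosing coordinates ($D'_1=\Vect(e_1)$, $D'_2=\Vect(e_2)$), tracking the images of the elementary matrices $E_{1,n-1}$ and $E_{1,n}$ to obtain $g(\calR(1,n-2))\subset\calR(2,n-2)$, and then for each $D_3$ producing a witness $A\in\calR(1,n-2)\setminus V'$ lying in $\calM_{D_3}$ whose image $g(A)$ forces $D'_3\subset\Vect(e_1,e_2)$. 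You instead stay coordinate-free: for the lower bound you use $H_{D_1}\cap H_{D_2}=V'$ and the evaluation map to get $D'_{D_1}\neq D'_{D_2}$ (the paper simply notes that $H_{D_1}\neq H_{D_2}$ forces this via the identification), and for the upper bound you observe directly that $\{M: Me_{n-1}\in\calP,\;Me_n=0\}=\sum_{D_1}(H_{D_1}\cap\calM_D)\subset g(V)\cap\calM_D=g(\calR(1,n-2))$, which has dimension $n^2-2n+2$. Your argument is cleaner and avoids normalizing; the paper's has the side benefit of establishing $g(\calR(1,n-2))\subset\calR(2,n-2)$ explicitly, a containment it reuses immediately afterwards to get $\calR(2,n-2)\subset g(V)$.
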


\begin{proof}
Set $D_1:=\Vect(e_{n-1})$ and $D_2:=\Vect(e_n)$.
Note again that \eqref{H0} and \eqref{H1} are unchanged should $g$ be replaced by $u_{P,I_n} \circ g$ for an arbitrary $P \in \GL_n(\K)$,
so we lose no generality whatsoever assuming that $D'_1=\Vect(e_1)$ and
$D'_2=\Vect(e_2)$ (recall that $D'_1 \neq D'_2$ since $H_{D_1} \neq H_{D_2}$).
For $(i,j)\in \lcro 1,n\rcro$, denote by $E_{i,j}$ the elementary matrix of $\Mat_n(\K)$
with entry $1$ at the spot $(i,j)$ and zero elsewhere. Then $E_{1,n} \in \calR(1,n-2) \cap \calM_{D_1}$,
hence $g(E_{1,n}) \in H_{D_1} \subset \calR(2,n-2)$. Similarly
$g(E_{1,n-1}) \in H_{D_2} \subset \calR(2,n-2)$. Since $g(V')=V'$, we deduce that
$g$ maps $\calR(1,n-2)$ into $\calR(2,n-2)$.
Let finally $D_3$ be an arbitrary line in $\Pgros(\Vect(e_{n-1},e_n))$.
Some non-trivial linear combination $A$ of $E_{1,n-1}$ and $E_{1,n}$ must then belong to $\calM_{D_3}$.
Note that $A \in \calR(1,n-2) \setminus V'$, which shows that $g(A) \in \calR(2,n-2) \setminus V'$
since $g$ is one-to-one and $g(V')=V'$. On the other hand $g(A) \in H_{D_3}$ hence
$g(A)x \in D'_3$ for any $x \in \Vect(e_{n-1},e_n)$. Since $g(A) \not\in V'$, we may then choose
$x$ such that $g(A)x \neq 0$, which shows that $D'_3 \subset \Vect(e_1,e_2)=D'_1+D'_2$.
This shows $\calP=D'_1+D'_2$ and proves our claim.
\end{proof}

Notice in particular that $g(V)$ contains every rank $1$ matrix with image $D'_1$, for $D_1$ in $\Pgros(\Vect(e_{n-1},e_n))$,
hence it contains any matrix $M \in \Mat_n(\K)$ such that $\im(M) \subset \calP$. \\
As in the beginning of the proof of Claim \ref{planeclaim}, we lose no generality assuming that $\calP=\Vect(e_1,e_2)$
and $D'_1=\Vect(e_1)$ for $D_1:=\Vect(e_n)$.

Note then that, for $D:=D_1=\Vect(e_n)$, one has $g(V \cap \calM_D)=\calR(1,n-2)$
hence $(g^{-1})^{-1}(\calM_D)=\calR(1,n-2)$ and $g^{-1}(\calM_D)=\calR(1,n-2)$, and moreover
$g^{-1}(V')=V'$. Since $g(V)$ contains both $\calR(1,n-2)$ and the space of all matrices $M$ with $\im M \subset \Vect(e_1,e_2)$,
one has $\calR(2,n-2) \subset g(V)$.

Now \textbf{we replace $(g,V)$ with $(g^{-1},g(V))$.}
Notice that \eqref{H0} and \eqref{H1} are preserved, but we now have the additional
fact:
\begin{equation}\label{H2}
V \; \text{contains the linear subspace $\calR(2,n-2)$.}
\end{equation}
Note that the reductions of the present section preserve \eqref{H2} hence we lose no generality assuming that\footnote{\eqref{H3} and \eqref{H4} are respectively \eqref{H0} and the result of Claim 10 for the
``old" pair $(g,V)$, i.e.\ for $(g^{-1},g(V))$ with our new notations.}:
\begin{equation}\label{H3}
g(V \cap \calM_{\Vect(e_n)})=\calR(1,n-2)
\end{equation}
and
\begin{equation}\label{H4}
H_{D_1} \subset \calR(2,n-2) \quad \text{for every line $D_1 \subset \Vect(e_{n-1},e_n)$.}
\end{equation}
Noting that $\calR(2,n-2) \subset \underset{D_1 \in \Pgros(\Vect(e_{n-1},e_n))}{\sum} \calM_{D_1}$,
this yields the inclusion $g\bigl(\calR(2,n-2)\bigr) \subset \calR(2,n-2)$, therefore:
\begin{equation}\label{H5}
g\bigl(\calR(2,n-2)\bigr)=\calR(2,n-2).
\end{equation}

\subsection{Sorting out the action of $g$ on $\calR(2,n-2)$}

Since $g$ stabilizes both $V'$ and $\calR(2,n-2)$, we deduce that there is a linear automorphism
$\varphi$ of $\Mat_2(\K)$ such that, for every $M=\begin{bmatrix}
? & A \\
? & 0
\end{bmatrix}$ with $A\in \Mat_2(\K)$, one has $g(M)=\begin{bmatrix}
? & \varphi(A) \\
? & 0
\end{bmatrix}$.
Since $g$ preserves non-singularity, it follows that $\varphi$ must also preserve non-singularity, hence
Dieudonn\'e's theorem shows that it is a Frobenius automorphism.
However, we know that $g$ maps $\calR(1,n-2)$ into the set of matrices with zero as last column,
hence $\varphi$ may not be some $u_{P,Q}$.
Hence $\varphi=v_{P,Q}$ for some pair $(P,Q)\in \GL_2(\K)^2$.
The initial assumption $g^{-1}(\calM_{\Vect(e_n)})=\calR(1,n-2)$ then yields
$Qe_2 \in \Vect(e_2)$, whilst the intermediate one $g(V \cap \calM_{\Vect(e_n)})=\calR(1,n-2)$
yields $Pe_1\in \Vect(e_1)$. We thus lose no generality assuming:
\begin{equation}\label{H6}
\varphi(A)=A^T \quad \text{for every $A \in \Mat_2(\K)$}
\end{equation}
(indeed, in the general case, replace $g$ with $g':=u_{P',Q'}\circ g$ with
$P':=\begin{bmatrix}
P^{-1} & 0 \\
0 & I_{n-2}
\end{bmatrix}$ and $Q':=
\begin{bmatrix}
I_{n-2} & 0 \\
0 & Q^{-1}
\end{bmatrix}$, and check that $g'$ satisfies assumptions \eqref{H0}, \eqref{H1}, \eqref{H3}, \eqref{H4}, \eqref{H5} and \eqref{H6}). \\
Let $L \in \Mat_{2,n-2}(\K)$ and set $M_L:=\begin{bmatrix}
L & 0 \\
0 & 0
\end{bmatrix}$.
Let $N \in \GL_{n-2}(\K)$, and denote by $M$ the matrix of $\calR(2,n-2)$ such that
$g(M)=\begin{bmatrix}
0 & I_2 \\
N & 0
\end{bmatrix}$.  Then
$g(M_L)=\begin{bmatrix}
? & 0 \\
L' & 0
\end{bmatrix}$ for some $L' \in \Mat_{n-2}(\K)$.
However $M+M_L$ is non-singular, hence
$\begin{bmatrix}
? & I_2 \\
N+L' & 0
\end{bmatrix}$ is non-singular, which shows that $N+L'$ is non-singular.
It follows that $N+L'$ is non-singular for every $N \in \GL_{n-2}(\K)$, and the next lemma shows that $L'=0$.

\begin{lemme}\label{addtononsingular}
Let $A \in \Mat_p(\K)$ be such that $\forall P \in \GL_p(\K), \; A+P \in \GL_p(\K)$. Then $A=0$.
\end{lemme}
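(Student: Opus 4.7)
The plan is to argue by contraposition: assuming $A \neq 0$, I will exhibit an invertible matrix $P$ such that $A+P$ is singular, contradicting the hypothesis.

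First, set $r := \rk A$; the assumption $A \neq 0$ gives $r \geq 1$. Using the classical reduction of a rank-$r$ matrix to the form $J_r := \begin{bmatrix} I_r & 0 \\ 0 & 0 \end{bmatrix}$ by left and right multiplication with invertible matrices, I would write $A = Q J_r R$ for some $(Q,R) \in \GL_p(\K)^2$. Since every invertible matrix $P$ can then be written $P = Q K R$ with $K \in \GL_p(\K)$, and since $A+P = Q(J_r + K)R$ is singular if and only if $J_r + K$ is, the problem reduces to producing a single invertible $K \in \GL_p(\K)$ such that $J_r + K$ is singular.

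In the case $r = p$, the choice $K := -I_p$ does the job: it is invertible and $J_r + K = 0$. In the case $r < p$, I would set $K := \begin{bmatrix} -I_r & 0 \\ 0 & I_{p-r} \end{bmatrix}$, whose determinant is $(-1)^r \neq 0$, so $K$ is invertible; and $J_r + K = \begin{bmatrix} 0 & 0 \\ 0 & I_{p-r} \end{bmatrix}$, which has rank $p-r < p$ and is therefore singular. In both cases the hypothesis is violated, forcing $A = 0$.

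There is no real obstacle here: once the equivalence of matrices under left-right multiplication by invertibles is in hand, the construction of $K$ is immediate and splits cleanly into the two cases $r = p$ and $r < p$. The lemma is essentially folklore and the argument above is the shortest route to it.
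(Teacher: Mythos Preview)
Your proof is correct and follows essentially the same approach as the paper: reduce $A$ to the normal form $J_r$ via equivalence of matrices, then exhibit an explicit invertible $K$ making $J_r+K$ singular. The only cosmetic difference is that the paper does not split into cases: it simply takes $K=-I_p$, which already gives $J_r+K=\begin{bmatrix}0 & 0\\ 0 & -I_{p-r}\end{bmatrix}$ singular whenever $r\geq 1$, so your separate treatment of $r=p$ versus $r<p$ is unnecessary.
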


\begin{proof}
Using the equivalence of matrices,
we lose no generality assuming that $A=\begin{bmatrix}
I_q & 0 \\
0 & 0
\end{bmatrix}$ for some $q \in \lcro 0,p\rcro$. If $q>0$, then taking $P:=-I_n$ yields a contradiction.
Hence $q=0$ and $A=0$.
\end{proof}

We now deduce that $g$ stabilizes the subspace of all matrices of the form
$\begin{bmatrix}
L & 0 \\
0 & 0
\end{bmatrix}$ with $L \in \Mat_{2,n-2}(\K)$.
Since $g$ also stabilizes $V'$, it follows that there is an automorphism $\psi$ of $\Mat_{n-2}(\K)$ such that
any matrix of the form
$\begin{bmatrix}
? & 0 \\
B & 0
\end{bmatrix}$ with $B \in \Mat_{n-2}(\K)$ is mapped by $g$ to $\begin{bmatrix}
? & 0 \\
\psi(B) & 0
\end{bmatrix}$.

\subsection{The final contradiction}

The final contradiction will now come by considering the structure of the subspace
$H:=g(V \cap \calM^{\Vect(e_1)})$.

\begin{claim}
There is no line $D_1 \subset E$ such that $H \subset \calM^{D_1}$.
\end{claim}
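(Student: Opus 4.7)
The plan is to argue by contradiction. I would assume there is a line $D_1 = \K v \subset E$ with $H \subset \calM^{D_1}$, which amounts to requiring $v^T g(M) = 0$ for every $M \in V \cap \calM^{\Vect(e_1)}$. Writing $v = \begin{bmatrix} v' \\ v'' \end{bmatrix}$ with $v' \in \K^2$ and $v'' \in \K^{n-2}$, I would kill $v'$ using the transpose action \eqref{H6} on the top-right $2 \times 2$ block of elements of $\calR(2,n-2)$, and then kill $v''$ using the $\psi$-action on the bottom-left $(n-2)\times(n-2)$ block of elements of $V'$. Both pieces of structure come from the preceding reductions, and together they see all of $v$.

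For the first step, for $(a,b) \in \K^2$ I would set $A := \begin{bmatrix} 0 & 0 \\ a & b \end{bmatrix}$ and $N := \begin{bmatrix} 0 & A \\ 0 & 0 \end{bmatrix}$. By \eqref{H2}, $N \in \calR(2,n-2) \subset V$, and since its first row vanishes one has $N \in V \cap \calM^{\Vect(e_1)}$. By \eqref{H5} and \eqref{H6}, $g(N)$ lies in $\calR(2, n-2)$ and has top-right $2 \times 2$ block equal to $A^T = \begin{bmatrix} 0 & a \\ 0 & b \end{bmatrix}$; in particular its last column is $(a, b, 0, \ldots, 0)^T$. The orthogonality of this column to $v$ for every $(a,b) \in \K^2$ then forces $v' = 0$.

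For the second step, for $P \in \Mat_{n-2}(\K)$ I would consider $M := \begin{bmatrix} 0 & 0 \\ P & 0 \end{bmatrix}$, which lies in $V' \cap \calR(2,n-2) \subset V \cap \calM^{\Vect(e_1)}$. The $\psi$-structure yields $g(M) = \begin{bmatrix} \alpha(P) & 0 \\ \psi(P) & 0 \end{bmatrix}$ for some linear $\alpha : \Mat_{n-2}(\K) \to \Mat_{2,n-2}(\K)$. Having already established $v' = 0$, reading $v^T g(M) = 0$ column by column reduces to $v''^T \psi(P) = 0$ for every $P$; surjectivity of the automorphism $\psi$ then forces $v'' = 0$, contradicting $v \neq 0$. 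The only delicate point is to ensure that the uncontrolled top-left blocks of $g(N)$ and $g(M)$ do not enter the argument: this is arranged in step 1 by reading off only the last column of $g(N)$, where the top-left block contributes nothing, and in step 2 by first killing $v'$ so that the term $v'^T \alpha(P)$ disappears.
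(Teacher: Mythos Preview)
Your proof is correct and follows essentially the same approach as the paper. The paper sets $H':=\calR(2,n-2)\cap\calM^{\Vect(e_1)}$ and shows directly that $g(H')E=E$ by using exactly your two families of test matrices (those with only $L_2$ nonzero, via \eqref{H6}, to get $\Vect(e_1,e_2)$; and those with only the $B$-block nonzero, via $\psi$, to get the projection onto $\Vect(e_3,\dots,e_n)$); you simply recast this as a contradiction against a putative orthogonal vector $v$, which is the dual formulation of the same computation.
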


\begin{proof}
Set $H':=\calR(2,n-2) \cap \calM^{\Vect(e_1)}$, i.e.\ $H'$ is the set of matrices of the form
$\begin{bmatrix}
0 & 0 \\
L_1 & L_2 \\
B & 0
\end{bmatrix}$ with $L_1 \in \Mat_{1,n-2}(\K)$, $L_2 \in \Mat_{1,2}(\K)$ and $B \in \Mat_{n-2}(\K)$.
On the one hand, applying $g$ to those matrices with $L_1=0$ and $B=0$
shows that the subspace $g(H')E$ contains $\Vect(e_1,e_2)$ (by \eqref{H6}). On the other hand, $\psi$ is an automorphism hence
applying $g$ to those matrices with $L_1=0$ and $L_2=0$
shows that the projection of $g(H')E$ on $\Vect(e_3,\dots,e_n)$ alongside $\Vect(e_1,e_2)$ is onto.
This shows that $g(H')E=E$, hence $g(H')$ may not be included in any $\calM^{D_1}$, which proves our claim since $H' \subset V \cap \calM^{\Vect(e_1)}$.
\end{proof}

\begin{claim}
There is no line $D_1 \subset E$ such that $H \subset \calM_{D_1}$.
\end{claim}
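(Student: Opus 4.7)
The plan is to argue by contradiction: suppose $H \subset \calM_{D_1}$ for some line $D_1 \subset E$, then show that $D_1$ must equal $\Vect(e_{n-1})$, upgrade the inclusion to an equality of maximal singular subspaces of $V$, and derive a numerical contradiction after intersecting with $\calR(2,n-2)$.

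\emph{Pinning down $D_1$.} Since $H' \subset V \cap \calM^{\Vect(e_1)}$, the inclusion $g(H') \subset H \subset \calM_{D_1}$ forces $D_1 \subset K := \bigcap_{M \in g(H')} \Ker M$. The plan for computing $K$ is to decompose an arbitrary element of $H'$ as
$\begin{bmatrix} X & 0 \\ 0 & 0 \end{bmatrix} + \begin{bmatrix} 0 & N \\ 0 & 0 \end{bmatrix} + \begin{bmatrix} 0 & 0 \\ P & 0 \end{bmatrix}$
with $X \in \Mat_{2,n-2}(\K)$ and $N \in \Mat_2(\K)$ both having zero first row and $P \in \Mat_{n-2}(\K)$; by the results just established, the $g$-image lies in $\calR(2,n-2)$, with top-right block $N^T$ from \eqref{H6} and bottom-left block $\psi(P)$. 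Splitting $v \in E$ as $(v_a,v_b) \in \K^{n-2} \oplus \K^2$, the bottom half of $g(M)v$ has the form $\psi(P)v_a + (\text{term independent of }P)$; letting $P$ vary and exploiting the bijectivity of $\psi$ forces $v_a = 0$. With $v_a = 0$, the top half of $g(M)v$ reduces to $N^Tv_b$, and since $N^T$ ranges over the matrices of $\Mat_2(\K)$ with zero first column, this imposes $v_n = 0$. Hence $K = \Vect(e_{n-1})$ and $D_1 = \Vect(e_{n-1})$.

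\emph{Equality of maximal singular subspaces, then dimension contradiction.} The assumption becomes $V \cap \calM^{\Vect(e_1)} \subset g^{-1}(\calM_{\Vect(e_{n-1})})$, and both sides are maximal singular subspaces of $V$ by Claim \ref{basicclaim} applied respectively to the inclusion $\id_V : V \hookrightarrow \Mat_n(\K)$ and to $g$, so the inclusion is an equality:
\[
V \cap \calM^{\Vect(e_1)} = g^{-1}(\calM_{\Vect(e_{n-1})}).
\]
Intersecting with $\calR(2,n-2) \subset V$ yields $H' = \calR(2,n-2) \cap g^{-1}(\calM_{\Vect(e_{n-1})})$. On the left, $\dim H' = \dim \calR(2,n-2) - n = n^2 - 3n + 4$, obtained by annihilating the $n$ entries of the first row. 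On the right, for $M \in \calR(2,n-2)$ the $(n-1)$-th column of $g(M) \in \calR(2,n-2)$ equals the first row of the top-right block $N$ stacked with zeros (by \eqref{H6} and the vanishing of the bottom-right block), so $g(M) e_{n-1} = 0$ is exactly the $2$ linear conditions saying that the first row of $N$ vanishes, giving dimension $n^2 - 2n + 2$. For $n \geq 3$ these two dimensions differ, which is the sought contradiction.

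\emph{Main obstacle.} The crux is the explicit kernel computation in the first step, which requires simultaneously exploiting the bijectivity of $\psi$ (to eliminate $v_a$) and the transpose action on the top-right block (to eliminate $v_n$) in order to narrow $K$ down to the single line $\Vect(e_{n-1})$. Once $D_1$ is thus identified, the invocation of Claim \ref{basicclaim} for maximality and the subsequent dimension count are routine.
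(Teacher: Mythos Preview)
Your proof is correct and takes a genuinely different route from the paper's. The paper proceeds by a case split on whether $D_1 \subset \Vect(e_{n-1},e_n)$. If $D_1 \not\subset \Vect(e_{n-1},e_n)$, a short codimension count on $g(V' \cap \calM^{\Vect(e_1)})$ inside $V'$ (whose elements already vanish on $\Vect(e_{n-1},e_n)$, hence on the $3$-dimensional space $D_1+\Vect(e_{n-1},e_n)$) yields $\codim \geq 3n$ against the actual codimension $3n-2$. If $D_1 \subset \Vect(e_{n-1},e_n)$, the paper upgrades (as you do) the inclusion to $V \cap \calM^{\Vect(e_1)} = g^{-1}(g(V)\cap\calM_{D_1})$ and then invokes Claim~\ref{impossclaim1} applied to $g^{-1}$ (which satisfies \eqref{H0}) to obtain the contradiction.

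Your approach replaces both cases by a single direct computation: first the kernel analysis of $g(H')$, using \eqref{H6} for the top-right block and the bijectivity of $\psi$ for the bottom-left block, forces $D_1=\Vect(e_{n-1})$; then, after the same maximality upgrade, intersecting with $\calR(2,n-2)$ and comparing the two dimension counts ($n^2-3n+4$ versus $n^2-2n+2$) gives the contradiction for $n\geq 3$. The trade-off is that the paper's argument is shorter because it recycles the substantial earlier work packaged in Claim~\ref{impossclaim1}, whereas your proof is self-contained at this stage, relying only on the structural facts \eqref{H5}, \eqref{H6} and the automorphism $\psi$ established immediately before.
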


\begin{proof}
Assume that there is a line $D_1 \in \Pgros(E)$ such that $H \subset \calM_{D_1.}$
\begin{itemize}
\item Assume first that $D_1 \subset \Vect(e_{n-1},e_n)$.
Then, applying Theorem \ref{atklloydtheorem} to $g^{-1} : g(V) \hookrightarrow \Mat_n(\K)$, we would have
$g(V \cap \calM^{\Vect(e_1)})=g(V) \cap \calM_{D_1}$, hence
$g^{-1}(g(V) \cap \calM_{D_1})=V \cap \calM^{\Vect(e_1)}$.
However, $g^{-1}$ satisfies condition \eqref{H0} so we may apply Claim \ref{impossclaim1} to it and obtain a contradiction.
\item We deduce that $D_1 \not\subset \Vect(e_{n-1},e_n)$.
Since $g$ stabilizes $V'$, it follows that every matrix of $g(V' \cap \calM^{\Vect(e_1)})$
vanishes on the $3$-dimensional subspace $D_1+ \Vect(e_{n-1},e_n)$, hence
$\codim g(V' \cap \calM^{\Vect(e_1)}) \geq 3n$, contradicting the fact that $V' \cap \calM^{\Vect(e_1)}$ has codimension $3n-2$.
\end{itemize}
\end{proof}

Applying the previous claims together with Claim \ref{basiclaim2},
we deduce that $H=g(V \cap \calM^{\Vect(e_1)})$ is equivalent to $\calR(1,n-2)$
or to $\calR(n-2,1)$. In any case, $H$ is spanned by its rank $1$ matrices, which will yield
a final contradiction, as we shall see.

Let $M \in H$, and write
$M=\begin{bmatrix}
? & ? \\
? & \alpha(M)
\end{bmatrix}$ with $\alpha(M) \in \Mat_{n-2,2}(\K)$.
Since $g(V)$ contains $\calR(2,n-2)$, we deduce that
$g(V)$ contains the matrix
$\begin{bmatrix}
0 & 0 \\
0 & \alpha(M)
\end{bmatrix}$. Assume $\alpha(M)$ has rank $1$ and let $\begin{bmatrix}
a \\
b
\end{bmatrix}$ be a non-zero vector in its kernel.
Then $g(V)$, which contains $\calR(2,n-2)$, also contains the matrix
$\begin{bmatrix}
0 & A \\
0 & 0
\end{bmatrix}$ for every singular matrix $A \in \Mat_2(\K)$ with kernel $\K\begin{bmatrix}
a \\
b
\end{bmatrix}$.
Setting $D:=\K\begin{bmatrix}
0 & \dots & 0 & a & b
\end{bmatrix}^T$, we deduce that
the intersection of $g(V)$ with the set of all matrices $M$ such that $\Vect(e_1,\dots,e_{n-2}) \oplus D \subset \Ker M$
has a dimension greater than $2$. This however yields a contradiction
because it would show that $\codim(g(V) \cap \calM_D)<2n-2$, whereas
Claim \ref{structureclaim} applies to $g^{-1}$ and shows that $\codim g(V) \cap \calM_D=2n-2$.

We deduce that if $\rk M=1$, then $\alpha(M)=0$.
Since $\alpha$ is linear and $H$ is spanned by its rank $1$ matrices, we deduce that $\alpha=0$.
This shows that $H \subset \calR(2,n-2)$. However $g(\calR(2,n-2))=\calR(2,n-2)$ and $g$ is one-to-one hence
$V \cap \calM^{\Vect(e_1)} \subset \calR(2,n-2)$.
It follows that $V$ contains no matrix of the form
$\begin{bmatrix}
0 & 0 \\
0 & C
\end{bmatrix}$ for some $C \in \Mat_{n-2,2}(\K) \setminus \{0\}$ hence
$\codim V \geq 2(n-2)>n-2$, a final contradiction.
Thus Lemma \ref{hightechlemma} is proven at last, which completes the proof of Theorem \ref{strong}.

\section{The weak preservers of non-singular matrices}\label{weakproof}

In this section, we turn to the proof of Theorems \ref{weakgeneral} and \ref{weakKinfinite}.
First, notice that Theorem \ref{weakgeneral} trivially derives from Theorem \ref{strong} when $\K$ is finite:
indeed, in this case, if $f : V \rightarrow V$ is one-to-one and stabilizes $V \cap \GL_n(\K)$,
then we have $f^{-1}(\GL_n(\K))=V \cap \GL_n(\K)$ since $V \cap \GL_n(\K)$ is finite.
In the case $\K$ is infinite, Theorem \ref{weakgeneral} will be deduced from Theorem \ref{weakKinfinite}.

We now try to derive Theorem \ref{weakKinfinite} from Theorem \ref{strong}.
It obviously suffices to prove the following proposition:

\begin{prop}\label{weakimpliesstrong}
Assume $\K$ is infinite.
Let $V$ be a linear subspace of $\Mat_n(\K)$ such that $\codim V<n-1$, and $f : V \hookrightarrow \Mat_n(\K)$
be a linear embedding such that $f(V \cap \GL_n(\K)) \subset \GL_n(\K)$. Then $f^{-1}(\GL_n(\K))=V \cap \GL_n(\K)$.
\end{prop}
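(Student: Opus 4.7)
Set $D := \det|_V$ and $h := \det \circ f$; both are homogeneous polynomial maps $V \to \K$ of degree $n$. The weak preservation hypothesis translates to $h$ being nowhere zero on $V \cap \GL_n(\K)$, while the desired conclusion asserts that $h$ vanishes at every point of $V$ where $D$ does. The plan is to establish the polynomial identity $h = \lambda\, D$ for some $\lambda \in \K^\times$, from which the conclusion is immediate. Note that by Corollary 6 of \cite{dSPclass}, $V \cap \GL_n(\K) \neq \emptyset$, so both $h$ and $D$ are non-zero polynomials on $V$.

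The first step is to use the infiniteness of $\K$ to transfer the hypothesis to the algebraic closure $\overline{\K}$. Let $\calU \subset V \otimes \overline{\K}$ denote the Zariski open complement of the singular hypersurface $\{D=0\}$. Because $\K$ is infinite, the $\K$-points of $V$ are Zariski dense in $V \otimes \overline{\K}$, and hence those of $\calU$, namely $V \cap \GL_n(\K)$, are Zariski dense in $\calU$. The hypothesis tells us that the Zariski open subset $\{h \neq 0\} \cap \calU$ of $\calU$ contains this dense subset, hence equals $\calU$. In other words, $\{h=0\} \subset \{D=0\}$ as subvarieties of $V \otimes \overline{\K}$.

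The second step, which is the technical heart of the argument, is to establish that $D = \det|_V$ is absolutely irreducible, i.e.\ irreducible in $\overline{\K}[V]$. This is the point at which the codimension bound $\codim V < n-1$ is truly exploited: while $\det$ on $\Mat_n(\K)$ is classically absolutely irreducible, its restriction to an arbitrary linear subspace may in general cease to be so, but for $\codim V \leq n-2$ the irreducibility survives. The route suggested in the introduction is to prove the Nullstellensatz-type statement that every polynomial $g \in \K[V]$ vanishing on the singular matrices of $V$ is a multiple of $D$; such a statement can be obtained from the fact that the rank-$(n-1)$ matrices in $V$ form a smooth Zariski dense subset of $\{D=0\}$, combined with a standard local analysis along the normal direction.

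Once the absolute irreducibility of $D$ is established, the proof concludes quickly: the inclusion $\{h=0\} \subset \{D=0\}$ over $\overline{\K}$ together with Hilbert's Nullstellensatz gives $D \mid h^N$ in $\overline{\K}[V]$ for some $N \geq 1$, and the primality of $D$ then forces $D \mid h$. The equality $\deg h = \deg D = n$ yields $h = c\, D$ for some $c \in \overline{\K}^\times$, and since $h$ and $D$ both have $\K$-coefficients, $c$ must belong to $\K^\times$. Therefore $\det(f(M)) = c\, \det(M)$ for every $M \in V$, which immediately implies $f^{-1}(\GL_n(\K)) = V \cap \GL_n(\K)$. The main obstacle is the second step; the density argument in step one and the Nullstellensatz deduction in the last step are comparatively straightforward once absolute irreducibility is in hand.
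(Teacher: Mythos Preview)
Your Step 1 contains a genuine error. The claim that ``the Zariski open subset $\{h \neq 0\} \cap \calU$ contains the dense set of $\K$-points of $\calU$, hence equals $\calU$'' is false: an open set containing a dense subset need not be the whole space (equivalently, a proper closed subset can miss every point of a dense set). For a concrete counterexample take $\K=\R$, $V=\K^3$, $D=x^2+y^2-z^2$ and $h=x^2+y^2+z^2$: both are absolutely irreducible homogeneous quadrics, and $\{h=0\}(\R)=\{0\}\subset\{D=0\}(\R)$, yet over $\C$ the point $(1,0,i)$ lies in $\{h=0\}\setminus\{D=0\}$. So the passage to $\overline{\K}$ fails as written. (There is also a direction slip in Step 3: from $V(h)\subset V(D)$ the Nullstellensatz gives $h\mid D^N$, not $D\mid h^N$; the conclusion $h=cD$ can still be reached, but via the irreducibility of $D$ together with $\deg h=\deg D$ and $h\neq 0$.)

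The paper sidesteps both issues by reversing the roles: instead of $h=\det\circ f$ on $V$, it considers $p:=\det\circ f^{-1}$ on $f(V)$. The contrapositive of the hypothesis says directly that $p$ vanishes on every singular $\K$-point of $f(V)$, i.e.\ one has the inclusion $\{\det_{|f(V)}=0\}(\K)\subset\{p=0\}(\K)$ in the \emph{useful} direction. The paper then proves entirely over $\K$ (Propositions \ref{detirr} and \ref{detdivides}, by induction on $n$ and a one-variable Euclidean division) that any polynomial on a subspace $W$ with $\codim W\leq n-2$ which vanishes on the singular $\K$-points of $W$ is already a multiple of $\det_{|W}$. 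No algebraic closure, no density argument, and no Nullstellensatz are needed. If you wish to salvage your route through $\overline{\K}$, you must first switch to $p$ on $f(V)$ and then establish separately that the singular $\K$-points of $f(V)$ are Zariski dense in $\{\det_{|f(V)}=0\}(\overline{\K})$; this is true here but is an additional nontrivial step.
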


In order to show this, we will generalize a method of \cite{Botta} by
considering polynomial functions over the $\K$-vector space $V$.
Since $\K$ is infinite, these can be treated as algebraic polynomials.
Notice in particular that if $V$ is a linear subspace of $\Mat_n(\K)$, then
$\det_{|V}$, the restriction of the determinant to $V$, is a homogeneous polynomial of degree $n$.

In order to establish Proposition \ref{weakimpliesstrong}, we successively prove the following two results:

\begin{prop}\label{detirr}
Assume $\K$ is infinite.
Let $V$ be a linear subspace of $\Mat_n(\K)$ such that $\codim V \leq \max(n-2,0)$.
Then $\det_{|V}$ is irreducible.
\end{prop}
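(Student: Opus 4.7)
My plan is a proof by contradiction: suppose $\det_{|V} = P \cdot Q$ for homogeneous polynomials $P, Q \in \K[V]$ of positive degrees. Since $\K$ is infinite, polynomial functions on $V$ coincide with elements of $\K[V]$, and factors of a homogeneous polynomial are themselves homogeneous. Because irreducibility over the algebraic closure $\overline{\K}$ implies it over $\K$, I would extend scalars and argue geometrically. The case $n \leq 2$ is immediate since the hypothesis then forces $V = \Mat_n(\K)$, so I assume $n \geq 3$ and set $c := \codim V \leq n-2$. Throughout, $V^\circ \subset \Mat_n(\overline{\K})$ denotes the orthogonal of $V$ under the trace pairing $\langle A,B\rangle := \tr(AB)$, so $\dim V^\circ = c$. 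The strategy is to show the singular locus
$$\calS_V := \{M \in V : \det M = 0\}$$
is both geometrically irreducible and reduced as a hypersurface of $V$, contradicting the factorisation.

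For irreducibility, I would study the incidence variety
$$\tilde Z := \bigl\{(x, M) \in \Pgros(E) \times V : Mx = 0 \bigr\}$$
via its two projections. The fibre of $\tilde Z \to \Pgros(E)$ over $x$ is the kernel in $V$ of $M \mapsto Mx$. If this evaluation failed to be surjective for every non-zero $x$, then for each such $x$ some non-zero $y$ would satisfy $\tr(Nxy^T) = 0$ for all $N \in V$, putting the rank-one matrix $xy^T$ into $V^\circ$; the resulting $\Pgros(E)$-parametrised family of rank-one matrices would have total dimension at least $n$, whereas it lies in $V^\circ$ of dimension $c \leq n-2$, a contradiction. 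Hence the evaluation is surjective on a dense open subset of $\Pgros(E)$, over which $\tilde Z$ is a vector bundle of rank $\dim V - n$; therefore $\tilde Z$ is irreducible of dimension $\dim V - 1$. Since $\Pgros(E)$ is complete, $\tilde Z \to V$ is proper with set-theoretic image $\calS_V$, which is consequently irreducible. It follows that $\det_{|V} = \lambda F^k$ for some irreducible homogeneous $F \in \overline{\K}[V]$ and integer $k \geq 1$.

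To rule out $k \geq 2$, I would exhibit a rank-$(n-1)$ matrix $M_0 \in V$ where $\det_{|V}$ vanishes simply, equivalently for which the linear form $N \in V \mapsto \tr(\mathrm{adj}(M_0)\,N)$ is non-zero, i.e.\ $\mathrm{adj}(M_0) \notin V^\circ$. The fibre dimension theorem applied to $\tilde Z \to \calS_V$ shows the generic matrix of $\calS_V$ has one-dimensional kernel, so the rank-$(n-1)$ matrices of $V$ form a dense open subset of $\calS_V$ of dimension $\dim V - 1$. The adjugate map surjects onto the non-zero rank-one matrices with fibres of dimension $n(n-2)$, so its restriction to this dense open set has image of dimension at least $(\dim V - 1) - n(n-2) = 2n - c - 1 \geq n + 1 > c = \dim V^\circ$, which precludes containment of the image in $V^\circ$ and yields the required $M_0$.

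The principal obstacle is the irreducibility of $\calS_V$, and within that the verification that the evaluation $M \mapsto Mx$ is generically surjective on $V$; this is precisely where the hypothesis $c \leq n-2$ intervenes, through the dimension estimate on rank-one matrices lying inside $V^\circ$.
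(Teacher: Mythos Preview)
Your approach is correct and genuinely different from the paper's. The paper proceeds by induction on $n$: using Lemma~\ref{lemme13} to arrange $E_{1,1}\in V$ with $\codim K(V')\le\max(n-3,0)$, a putative factorisation $\det_{|V}=pq$ is restricted to the affine slice $E_{1,1}+V'$, where the induction hypothesis applied to $K(V')$ forces one factor, say $q$, to be linear; then $\Ker q$ is a singular subspace of $V$ of codimension at most $n-1$ in $\Mat_n(\K)$, which Dieudonn\'e's theorem forbids. Everything stays elementary and over $\K$. Your geometric route through the incidence variety and the adjugate is more conceptual, makes the role of the hypothesis $c\le n-2$ transparent via the bound on rank-one matrices in $V^\circ$, and would adapt more readily to other determinantal settings.

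One step deserves a word of justification: from $\tilde Z$ being a vector bundle over the dense open $U\subset\Pgros(E)$ you conclude that $\tilde Z$ itself is irreducible, but a priori there could be further components lying entirely over $\Pgros(E)\setminus U$. The fix is short. Since $\tilde Z$ is cut out by $n$ bilinear equations in $\Pgros(E)\times V$, every irreducible component has dimension at least $\dim V-1$. On the other hand, your Segre-embedding argument actually yields more than mere non-emptiness of $U$: the locus of $[x]$ where $M\mapsto Mx$ has rank at most $r$ has dimension at most $r-2$ (its preimage in $\{([x],[y]):xy^T\in V^\circ\}$ has fibres of projective dimension at least $n-r-1$, and that incidence embeds into $\Pgros(V^\circ)$ of dimension $c-1\le n-3$). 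Stratifying by $r$ then gives $\dim\bigl(\tilde Z\setminus\tilde Z|_U\bigr)\le\dim V-2$, so no extra component exists.
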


\begin{prop}\label{detdivides}
Assume $\K$ is infinite.
Let $V$ be a linear subspace of $\Mat_n(\K)$ such that $\codim V \leq \max(n-2,0)$,
and $p : V \rightarrow \K$ be a polynomial function such that $p(M)=0$ whenever $M\in V$ is singular.
Then $p$ is a multiple of $\det_{|V}$.
\end{prop}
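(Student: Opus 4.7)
The plan is to combine the irreducibility of $D := \det_{|V}$ from Proposition \ref{detirr} with a suitable density argument. Since $\K$ is infinite, polynomial functions on $V$ are uniquely represented by polynomials in $\K[V] \cong \K[x_1,\ldots,x_N]$ (where $N = \dim V$), which is a UFD. The goal is to show $D \mid p$ in $\K[V]$.

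My first step would be to extend scalars to the algebraic closure $\bar\K$: let $\bar D$, $\bar p \in \bar\K[V]$ denote the images. I would then establish two facts:
\begin{enumerate}[(i)]
\item $\bar D$ remains irreducible in $\bar\K[V]$ (absolute irreducibility of $\det_{|V}$), which should follow by rerunning the proof of Proposition \ref{detirr} over $\bar\K$, as that argument uses only elementary properties of determinants, ranks and linear algebra that are stable under base change;
\item the set $Z(D) \cap V$ of singular matrices in $V$ over $\K$ is Zariski-dense in the full variety $Z(\bar D) \subset \bar\K^N$.
\end{enumerate}
For (ii), I would exploit the condition $\codim V \leq n-2$, which forces $V$ to contain abundantly many rank-$(n-1)$ matrices (they can be parametrized explicitly by pulling back data from $\GL_{n-1}$-orbits intersected with $V$), and use the infinite field hypothesis to show that the $\K$-rational rank-$(n-1)$ matrices inside $V$ form a Zariski-dense subset of the smooth locus of $Z(D)$, which is itself dense in $Z(\bar D)$ by irreducibility.

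Granted (i) and (ii), the vanishing of $p$ on $Z(D) \cap V$ over $\K$ propagates via (ii) to the vanishing of $\bar p$ on $Z(\bar D)$ over $\bar\K$. By the Hilbert Nullstellensatz applied to the irreducible polynomial $\bar D$, we obtain $\bar D \mid \bar p$ in $\bar\K[V]$. Since the quotient of $\bar p$ by $\bar D$ is unique (the leading coefficient of $\bar D$ in a well-chosen variable is a nonzero scalar, so Euclidean division can be performed inside $\K[V]$ after a change of coordinates), this quotient in fact lies in $\K[V]$, giving $D \mid p$ in $\K[V]$ as desired.

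The main obstacle is (ii): density of $\K$-rational singular matrices is genuinely non-trivial over non-closed fields, and one has to construct such points explicitly using the structure of $V$. A more hands-on alternative, avoiding $\bar\K$ entirely, would be to pick a non-singular matrix $M_1 \in V \cap \GL_n(\K)$ (which exists since $\codim V < n$) and work inside $\K[V][t]$ with the polynomials $\tilde D(M,t) := D(M+tM_1)$ and $\tilde p(M,t) := p(M+tM_1)$. Since the leading coefficient of $\tilde D$ in $t$ is $\det(M_1) \in \K^\times$, Euclidean division yields $\tilde p = \tilde q \tilde D + \tilde r$ with $\deg_t \tilde r < n$, and the hypothesis forces $\tilde r$ to vanish at every $\K$-rational zero of $\tilde D$; showing $\tilde r = 0$ then requires exhibiting enough such zeros, which is the same essential density problem but now over the auxiliary affine space $V \times \K$ where it may be easier to produce rational points by exploiting that $\tilde D$ has top-degree term $\det(M_1)t^n$.
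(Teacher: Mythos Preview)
Your second approach (Euclidean division in an auxiliary variable) is very close to the paper's, but you have chosen $M_1$ to be non-singular, which makes $\tilde D(M,t)=\det(M+tM_1)$ a polynomial of degree $n$ in $t$. For a given $M\in V$, this polynomial may have no root in $\K$ at all, so you cannot conclude that $\tilde r(M,\cdot)$ vanishes anywhere; this is precisely the density obstacle you yourself flag, and nothing in your setup resolves it. The paper's key insight, which you are missing, is to move in a \emph{rank-one} direction instead. After the normalization of Lemma~\ref{lemme13} one may assume $E_{1,1}\in V$ and write $V=\K E_{1,1}\oplus V''$; then
\[
\det(xE_{1,1}+M)=x\,\det K(M)+q(M)\qquad (M\in V'')
\]
is \emph{linear} in $x$. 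Pseudo-Euclidean division now gives
\[
(\det K(M))^N\,p(xE_{1,1}+M)=\det(xE_{1,1}+M)\,r(x,M)+s(M),
\]
with remainder $s$ of degree $0$ in $x$. For any $M$ with $\det K(M)\neq 0$ one solves the linear equation to get $x\in\K$ with $\det(xE_{1,1}+M)=0$, forcing $s(M)=0$; since $\K$ is infinite this gives $s\equiv 0$. Thus $\det_{|V}$ divides $(\det K)^N\cdot p$, and since $\det_{|V}$ and $M\mapsto\det K(M)$ are irreducible of distinct degrees $n$ and $n-1$ (Proposition~\ref{detirr} applied to $V$ and to $K(V)$), they are coprime and $\det_{|V}\mid p$.

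Your first approach via $\bar\K$ and the Nullstellensatz is conceivable, but step~(ii)---Zariski density of the $\K$-rational singular matrices in $Z(\bar D)$---is left as a promise and is not obviously easier than the problem you started with. The rank-one trick above sidesteps all of this by manufacturing $\K$-rational singular matrices on demand.
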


Before proving those results, let us see right away how they may help us prove Proposition \ref{weakimpliesstrong}:

\begin{proof}[Proof of Proposition \ref{weakimpliesstrong}]
Consider the polynomial function $p :=\det_{|V} \circ f^{-1}$ on $f(V)$. Then $p$ is homogeneous with degree
$n$. The assumptions on $f$ show that $f^{-1}(M)$ is singular whenever $M \in f(V)$ is singular,
hence Proposition \ref{detdivides} applied to $f(V)$ shows that $p$ is a multiple of $\det_{|f(V)}$.
However, since $\det_{|f(V)}$ also has degree $n$, we deduce that $p=\lambda\,\det_{|f(V)}$
for some $\lambda \in \K$.
This yields $\det(M)=\lambda\,\det(f(M))$ for every $M \in V$. Since $\det_{|V}$ is irreducible, it is non-zero hence $\lambda \neq 0$.
This shows that $f^{-1}(\GL_n(\K))=V \cap \GL_n(\K)$.
\end{proof}

In order to prove Propositions \ref{detirr} and \ref{detdivides}, we first reduce the situation
to a more elementary one. For $M \in \Mat_n(\K)$, write
$M=\begin{bmatrix}
? & ? \\
? & K(M)
\end{bmatrix}$ with $K(M) \in \Mat_{n-1}(\K)$. \\
Given a linear subspace $V$ of $\Mat_n(\K)$, we denote by $V'$ the linear subspace of matrices of $V$ with a zero first column.
For $(i,j)\in \lcro 1,n\rcro^2$, denote by $E_{i,j}$ the elementary matrix with entry $1$ at the spot $(i,j)$
and $0$ elsewhere. Assume that $\Vect(E_{1,2},\dots,E_{1,n})\not\subset V$.
Then the rank theorem shows that
$$\codim_{\Mat_{n-1}(\K)} K(V') \leq \codim_{\Mat_n(\K)}V-1$$
therefore
$$\codim_{\Mat_{n-1}(\K)} K(V) \leq \codim_{\Mat_n(\K)}V-1.$$
We may now state the basic lemma that we will use:

\begin{lemme}\label{lemme13}
Let $V$ be a linear subspace of $\Mat_n(\K)$ such that $\codim V \leq \max(n-2,0)$.
Then $V$ is equivalent to a linear subspace $W$ which contains $E_{1,1}$ and for which
$\codim K(W') \leq \max(n-3,0)$.
\end{lemme}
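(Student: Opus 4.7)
The plan is to combine the remark made just before the lemma---that if $\Vect(E_{1,2},\dots,E_{1,n}) \not\subset V$ then $\codim K(V') \leq \codim V - 1$---with a suitable equivalence. Specifically, I would produce an equivalent subspace $W$ that simultaneously contains $E_{1,1}$ and does not contain all of $\Vect(E_{1,2},\dots,E_{1,n})$; applying the remark to this $W$ then gives $\codim K(W') \leq \codim V - 1 \leq n-3$, as desired. The case $\codim V = 0$ is trivial (take $W=V$), so I assume $V \subsetneq \Mat_n(\K)$.

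The crucial step is to locate a suitable rank-one matrix in $V$. For each $x \in \K^n$, the subspace $S_x := \{z \in \K^n : xz^T \in V\}$ of $\K^n$ is the kernel of the linear map $z \mapsto xz^T + V$ from $\K^n$ to $\Mat_n(\K)/V$, so $\codim S_x \leq \codim V \leq n-2$ and in particular $\dim S_x \geq 2$. Moreover $T := \{x \in \K^n : S_x = \K^n\}$ is itself a linear subspace of $\K^n$, and $T = \K^n$ would imply that $V$ contains every rank-one matrix, hence $V = \Mat_n(\K)$, contrary to assumption. So $T \subsetneq \K^n$, and I may pick $x \in \K^n \setminus T$ together with $y \in S_x \setminus \{0\}$, yielding a rank-one matrix $xy^T \in V$ with the critical property $S_x \subsetneq \K^n$.

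Then I would implement the equivalence. Choose $P, Q \in \GL_n(\K)$ with $Px = e_1$ and $Q^T e_1 = y$, so that $P(xy^T)Q^{-1} = E_{1,1}$ and hence $E_{1,1} \in W := PVQ^{-1}$. A direct computation shows that $E_{1,j} \in W$ for $j \geq 2$ is equivalent to $Q^T e_j \in S_x$; therefore $\Vect(E_{1,2},\dots,E_{1,n}) \subset W$ amounts to $H \subset S_x$, where $H := Q^T \Vect(e_2,\dots,e_n)$. The residual freedom in $Q$ allows $H$ to be any hyperplane of $\K^n$ not containing $y$, and it suffices to produce one such $H$ with $H \not\subset S_x$. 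Since $y \in S_x$ and $S_x \subsetneq \K^n$, any $v \in \K^n \setminus S_x$ is linearly independent from $y$; any hyperplane through $v$ avoiding $y$ then does the job, and $W$ has both desired properties.

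The main obstacle is really the existence step in the second paragraph: one must rule out the pathological situation where $V$ contains every rank-one matrix, which is impossible unless $V = \Mat_n(\K)$. The rest of the argument is routine book-keeping with the equivalence action of $\GL_n(\K) \times \GL_n(\K)$ together with an appeal to the remark preceding the lemma.
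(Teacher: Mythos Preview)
Your argument is correct and follows essentially the same route as the paper's proof. Both handle the case $\codim V=0$ trivially, then locate a ``row direction'' along which $V$ fails to contain the full row space but does contain some nonzero element, and use the $\GL_n(\K)\times\GL_n(\K)$-action to move that element to $E_{1,1}$ while preserving the non-containment $\Vect(E_{1,2},\dots,E_{1,n})\not\subset W$; the remark preceding the lemma then finishes. The paper phrases this via row and column operations (pick an index $i$ with $\Vect(E_{i,1},\dots,E_{i,n})\not\subset V$, permute it to the first row, then column-reduce), whereas you work coordinate-free with the subspaces $S_x$ and an explicit choice of the complementary hyperplane $H$; your version is slightly more detailed but not genuinely different.
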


\begin{proof}
The result is trivial when $\codim V=0$. Assume $\codim V>0$.
Then there must be an index $i \in \lcro 1,n\rcro$ such that $\Vect(E_{i,1},\dots,E_{i,n}) \not\subset V$.
Using row operations, we lose no generality assuming $i=1$. However, since $\codim V<n$,
we have $\Vect(E_{1,1},\dots,E_{1,n}) \cap V \neq \{0\}$. Using a series of column operations, we may then assume
furthermore that $E_{1,1} \in V$, whereas $\Vect(E_{1,2},\dots,E_{1,n}) \not\subset V$. With the above inequalities,
this leads to $\codim K(V') \leq n-3$.
\end{proof}

\begin{proof}[Proof of Proposition \ref{detirr}]
We use an induction on $n$. The result is trivial when $n=1$. Set an arbitrary integer $n>0$ and
assume that the result holds for $n-1$. Let $V \subset \Mat_n(\K)$ be a linear subspace such that
$\codim V \leq \max(n-2,0)$.
Notice that the problem is essentially unchanged should $V$ be replaced with $u(V)$
for some Frobenius automorphism. By Lemma \ref{lemme13}, we lose no generality assuming that
$V$ contains $E_{1,1}$ and $\codim K(V') \leq \max(n-2,0)$.
Assume $\det_{|V}=p\,q$ for some non-constant polynomial functions $p$ and $q$.
For any $M\in V'$, we then have
$$p(E_{1,1}+M)\,q(E_{1,1}+M)=\det K(M).$$
However, the induction hypothesis shows that the homogeneous polynomial function $\det_{|K(V')}$ is irreducible, hence
the homogeneous polynomial function $M \mapsto \det K(M)$ on $V'$ also is.
We then lose no generality assuming that $M \mapsto p(E_{1,1}+M)$ is a non-zero scalar multiple of
$M \mapsto \det K(M)$, hence has total degree $n-1$. Since $\det_{|V}$ is homogeneous,
$p$ and $q$ are also homogeneous hence $q$ must have degree $1$, i.e.\ $q$ is a linear form.
It follows that every matrix of $\Ker q$ is singular. However, $\codim_{\Mat_n(\K)} \Ker q \leq \codim_{\Mat_n(\K)} V+1
\leq n-1$ hence the Dieudonn\'e theorem \cite{Dieudonne} shows that $\Ker q$ must contain a non-singular matrix. This is a contradiction,
which shows that $\det_{|V}$ is irreducible.
\end{proof}

\begin{proof}[Proof of Proposition \ref{detdivides}]
As in the proof of Proposition \ref{detirr}, we lose no generality
assuming that the linear subspace $V$ contains $E_{1,1}$ and that $\codim K(V) \leq \max(n-2,0)$.
Define now $V'':=\bigl\{M \in V : \; m_{1,1}=0\bigr\}$ so that $V=V'' \oplus \K E_{1,1}$ and $K(V)=K(V'')$.
Development of the determinant along the first column yields a polynomial function $q : V'' \rightarrow \K$ such that
$$\forall (x,M)\in \K \times V'', \; \det(xE_{1,1}+M)=x\,\det K(M)+q(M).$$
Using the Euclidian algorithm with respect to the indeterminate $x$,
we may then find two polynomial functions $r : \K \times V'' \rightarrow \K$
and $s : V''\rightarrow \K$, together with a positive integer $N$ such that
$$\forall (x,M)\in \K \times V'', \; (\det K(M))^N\, p(xE_{1,1}+M)=\det(xE_{1,1}+M)\,r(x,M)+s(M)$$
and we may even assume that $s$ is a multiple of the polynomial function $M \mapsto \det K(M)$ (on $V''$).
Let $M \in V''$ such that $\det K(M) \neq 0$. Then we may find some $x \in \K$ such that $\det(xE_{1,1}+M)=0$, hence
$p(xE_{1,1}+M)=0$ and we deduce that $s(M)=0$. \\
This shows that $s=0$, hence $\det_{|V}$ divides the polynomial function $M \mapsto (\det K(M))^N\, p(M)$ on $V$.
However, we know from Proposition \ref{detirr} that both $\det_{|V}$ and $M \mapsto \det K(M)$
are irreducible homogeneous polynomial functions on $V$, with respective degrees $n$ and $n-1$. Therefore
$\det_{|V}$ may not divide the latter, which shows that it divides $p$.
\end{proof}

\section{The exceptional case of linear hyperplanes of $\Mat_3(\F_2)$}\label{F2}

\subsection{Reduction to the case of an internal linear preserver}\label{redtointernalF2}

In this section, we wish to examine more closely the situation of linear hyperplanes of $\Mat_3(\F_2)$.
The major obstruction for proving Theorems \ref{strong} and \ref{weakgeneral} in this case
is the counterexample in the Atkinson-Lloyd theorem.
Recall from Theorem 2 of \cite{dSPclass} that every $5$-dimensional singular linear subspace $V$
of $\Mat_3(\F_2)$ satisfies one of the mutually exclusive conditions:
\begin{enumerate}[(i)]
\item $V \subset \calM^D$ for a (unique) line $D \subset \F_2^3$;
\item $V \subset \calM_D$ for a (unique) line $D \subset \F_2^3$;
\item $V$ is equivalent to $\calR(1,1)$;
\item $V$ is equivalent to the subspace
$$\calJ_3(\F_2):=\Biggl\{
\begin{bmatrix}
a & 0 & 0 \\
c & b & 0 \\
d & e & a+b
\end{bmatrix} \mid (a,b,c,d,e)\in \F_2^5\Biggr\}$$
i.e.\ to the subspace of lower triangular matrices with trace zero.
\end{enumerate}
This last case is one major obstacle both in the proof of Lemma \ref{hightechlemma}
and in that of Claim \ref{inverseimageclaim}.
Notice however that if the result of Claim \ref{inverseimageclaim} holds for some
linear embedding $f : V \hookrightarrow \Mat_3(\F_2)$ of a hyperplane $V$ such that $f$ strongly
preserves non-singularity, then the rest of the proof from Section \ref{strongstart}
applies and shows that $f$ extends to a Frobenius automorphism of $\Mat_3(\F_2)$.

\vskip 3mm
We reduce the study to three cases.
Using the non-degenerate symmetric bilinear form $(A,B) \mapsto \tr(AB)$ on $\Mat_n(\K)$, we see that
orbits of hyperplanes of $\Mat_n(\K)$ are classified by the orbits of their orthogonal subspace (which is always a line), i.e.\
by the rank of the non-zero matrices in their orthogonal subspace. It follows that there are exactly
$n$ orbits of hyperplanes of $\Mat_n(\K)$ under equivalence, and in the case at hand,
every hyperplane of $\Mat_3(\F_2)$ is equivalent to one and only one of the three particular hyperplanes:
\begin{enumerate}[(a)]
\item $\calV_1(\F_2):=
\biggl\{
\begin{bmatrix}
M & C \\
L & 0
\end{bmatrix} \mid (M,C,L)\in \Mat_2(\F_2) \times \Mat_{2,1}(\F_2) \times \Mat_{1,2}(\F_2)
\biggr\}$;
\item $\calV_2(\F_2):=
\biggl\{
\begin{bmatrix}
M & C \\
L & a
\end{bmatrix} \mid (M,C,L,a)\in \frak{sl}_2(\F_2) \times \Mat_{2,1}(\F_2) \times \Mat_{1,2}(\F_2) \times \F_2
\biggr\}$;
\item $\frak{sl}_3(\F_2)$.
\end{enumerate}

Therefore, we lose no generality assuming that $V$ is one of these three hyperplanes,
and we will actually study the three cases separately.
In order to do this, it will be convenient to reduce the situation to the case where $f(V)=V$.
This is done thanks to the next result:

\begin{prop}\label{linpresimpliqueequivalent}
Let $V$ and $V'$ be linear hyperplanes of $\Mat_3(\F_2)$ and $f : V \rightarrow V'$
be a linear isomorphism such that $f^{-1}(\GL_3(\F_2))=V \cap \GL_3(\F_2)$.
Then $V$ and $V'$ are equivalent.
\end{prop}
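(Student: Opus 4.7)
The plan is to find a numerical invariant of the hyperplane $V$ that is preserved under any linear bijection satisfying the given preservation hypothesis, and that separates the three equivalence classes of hyperplanes in $\Mat_3(\F_2)$ recalled at the beginning of the section. The natural candidate is $\card(V \cap \GL_3(\F_2))$. Since $f^{-1}(\GL_3(\F_2))=V \cap \GL_3(\F_2)$, the bijection $f$ restricts to a bijection from $V \cap \GL_3(\F_2)$ onto $V' \cap \GL_3(\F_2)$, so the two cardinalities agree. It then suffices to check that the three values of $\card(W \cap \GL_3(\F_2))$ for $W$ ranging over $\calV_1(\F_2)$, $\calV_2(\F_2)$ and $\frak{sl}_3(\F_2)$ are pairwise distinct.

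I would compute these three cardinalities by direct counting in $\GL_3(\F_2)$, which has $168$ elements. For $\calV_1(\F_2)=\bigl\{X \in \Mat_3(\F_2) : X_{3,3}=0\bigr\}$, the third column of an invertible $X$ lies in $\{e_1,e_2,e_1+e_2\}$, and by transitivity of the action of $\GL_3(\F_2)$ on non-zero column vectors each such column extends to exactly $168/7=24$ invertible matrices, yielding $72$. For $\frak{sl}_3(\F_2)$, I would classify invertible trace-zero matrices by their characteristic polynomial in $\F_2[T]$: the only candidates are $T^3+T+1$ (irreducible, with centralizer $\F_8^\times$ of order $7$, hence one conjugacy class of size $24$) and $T^3+1=(T+1)(T^2+T+1)$ (squarefree with two distinct irreducible factors, centralizer $\F_2^\times \times \F_4^\times$ of order $3$, hence one class of size $56$), for a total of $80$. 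For $\calV_2(\F_2)=\bigl\{X : X_{1,1}=X_{2,2}\bigr\}$, I would split according to the common value of $X_{1,1}$ and $X_{2,2}$ and enumerate the possible first two columns; once these are chosen, the third column is only constrained to lie outside the plane they span, which multiplies the count by $4$. A short casework then yields $32$ invertibles with $X_{1,1}=X_{2,2}=0$ and $56$ with $X_{1,1}=X_{2,2}=1$, giving $88$ altogether.

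Since $72$, $80$ and $88$ are pairwise distinct, the hyperplanes $V$ and $V'$ must lie in the same $\GL_3(\F_2) \times \GL_3(\F_2)$-orbit. The main obstacle is the third count: the linear independence of the first two columns depends delicately on the forced entries in positions $(1,1)$ and $(2,2)$, so a careful subdivision is needed. By contrast, the first count reduces immediately to transitivity on non-zero column vectors, and the second to the standard enumeration of $\GL_3(\F_2)$-conjugacy classes by rational canonical form.
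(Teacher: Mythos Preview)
Your argument is correct and follows essentially the same counting strategy as the paper: the invariant $\card(V\cap\GL_3(\F_2))$ separates the three equivalence classes, and your computations for $\frak{sl}_3(\F_2)$ (via characteristic polynomials and centralizer orders) and for $\calV_2(\F_2)$ (via column-by-column enumeration, yielding $32+56=88$) agree with the paper's values of $80$ and $88$.

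The one small difference is in how $\calV_1(\F_2)$ is handled. The paper does not compute $\card(\calV_1(\F_2)\cap\GL_3(\F_2))$ at all; instead it observes that $\calV_1(\F_2)$ is the unique equivalence class of hyperplanes containing a $6$-dimensional singular subspace (namely some $\calM_D$ or $\calM^D$), and that this structural property is obviously preserved by any bijection satisfying the hypothesis. Your uniform counting approach, giving $72$ for $\calV_1(\F_2)$ via transitivity on nonzero columns, is equally valid and arguably cleaner, since it avoids invoking Dieudonn\'e's theorem on maximal singular subspaces and treats all three cases on the same footing.
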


The case one of the hyperplanes $V$ and $V'$ is equivalent to $\calV_1(\F_2)$ is easy:
indeed, $\calV_1(\F_2)$ contains a $6$-dimensional singular subspace.
However, if a linear hyperplane $V''$ of $\Mat_3(\F_2)$ contains such a subspace, then the Dieudonn\'e theorem
on singular subspaces shows that $\calM^D \subset V''$ or $\calM_D \subset V''$ for some line $D \subset \F_2^3$,
and this proves that every matrix of $(V'')^\bot$ has a rank lesser than or equal to $1$, hence
$V''$ is equivalent to $\calV_1(\F_2)$.
We deduce that if $V$ or $V'$ is equivalent to $\calV_1(\F_2)$, then so is the other one.

The remaining cases rely upon a counting argument: we show that
$\#\bigl(\calV_2(\F_2) \cap \GL_3(\F_2)\bigr) \neq \#\bigl(\frak{sl}_3(\F_2) \cap \GL_3(\F_2)\bigr)$, which
clearly yields Proposition \ref{linpresimpliqueequivalent}.

\begin{prop}
The space $\frak{sl}_3(\F_2)$ has $80$ non-singular elements.
\end{prop}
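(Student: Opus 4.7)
My plan is to count via characteristic polynomials and conjugacy class sizes in $\GL_3(\F_2)$. Since $\tr M$ equals the coefficient of $x^2$ in the characteristic polynomial $\chi_M$, and since any $M \in \GL_3(\F_2)$ has $\chi_M$ of the form $x^3 + ax^2 + bx + 1$ with $(a,b) \in \F_2^2$, the condition $\tr M = 0$ amounts to $a = 0$. Hence the trace-zero invertible matrices are precisely those whose characteristic polynomial is either $p_1(x) := x^3 + x + 1$ or $p_2(x) := x^3 + 1$. It suffices to count the matrices realizing each of these two polynomials, and to this end I would use the orbit-stabilizer theorem for the conjugation action of $\GL_3(\F_2)$ (a group of order $7 \cdot 6 \cdot 4 = 168$) on itself.

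The polynomial $p_1(x) = x^3 + x + 1$ has no root in $\F_2$, so it is irreducible. Any $M \in \Mat_3(\F_2)$ with $\chi_M = p_1$ then has the same minimal polynomial $p_1$ (the minimal and characteristic polynomials share the same irreducible factors), hence $M$ is cyclic. Consequently all such matrices form a single conjugacy class in $\GL_3(\F_2)$ (they are all conjugate to the companion matrix of $p_1$), and the centralizer of $M$ in $\Mat_3(\F_2)$ is the commutative subalgebra $\F_2[M] \cong \F_2[x]/(p_1(x)) \cong \F_8$. Its unit group has order $7$, so this conjugacy class has size $168/7 = 24$.

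The polynomial $p_2(x) = x^3 + 1 = (x+1)(x^2+x+1)$ factors as a product of two coprime irreducibles (note that $1$ is not a root of $x^2+x+1$ over $\F_2$). The same cyclicity argument applies: a matrix with characteristic polynomial $p_2$ has minimal polynomial divisible by both $(x+1)$ and $(x^2+x+1)$, hence equal to $p_2$, so it is cyclic and all such matrices are conjugate. Its centralizer in $\Mat_3(\F_2)$ is $\F_2[x]/(p_2(x)) \cong \F_2 \times \F_4$ by the Chinese Remainder Theorem, with unit group of order $1 \cdot 3 = 3$. Hence this class has size $168/3 = 56$, and summing yields $\card \bigl(\frak{sl}_3(\F_2) \cap \GL_3(\F_2)\bigr) = 24 + 56 = 80$. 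There is no real obstacle here beyond ensuring that each characteristic polynomial corresponds to a \emph{single} conjugacy class, which is precisely the content of the equality-of-minimal-and-characteristic-polynomial observation used in both steps.
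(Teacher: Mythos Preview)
Your proof is correct and follows essentially the same approach as the paper: reduce to the two possible characteristic polynomials $x^3+x+1$ and $x^3+1$, observe that each yields a single conjugacy class of cyclic matrices, and compute the class sizes via the orbit--stabilizer theorem. The only cosmetic difference is that for $x^3+1$ you identify the centralizer abstractly as $\F_2[x]/(x^3+1)\cong \F_2\times\F_4$ via the Chinese Remainder Theorem, whereas the paper computes it from an explicit block-diagonal representative; both yield a unit group of order~$3$ and hence a class of size~$56$.
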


\begin{proof}
A matrix $M \in \Mat_3(\F_2)$ belongs to $\frak{sl}_3(\F_2) \cap \GL_3(\F_2)$ if and only if its
characteristic polynomial is $t^3+t+1$ or $t^3+1$.
Recall that $\# \GL_3(\F_2)=(2^3-1)(2^3-2)(2^3-2^2)=7 \times 6 \times 4$.
\begin{itemize}
\item Note that $t^3+t+1$ is irreducible in $\F_2[t]$ hence the matrices of $\Mat_3(\F_2)$ with characteristic polynomial
$t^3+t+1$ form a single orbit under similarity, and the companion matrix of $t^3+t+1$ is one of them.
Moreover, the centralizer of this companion matrix in the algebra $\Mat_3(\F_2)$ is
$\F_2[t]/(t^3+t+1) \simeq \F_8$ since $t^3+t+1$ is irreducible: therefore this centralizer contains exactly $7$ non-singular matrices.
It follows that there are $6 \times 4=24$ matrices of $\Mat_3(\F_2)$ with characteristic polynomial $t^3+t+1$.
\item We may factorize $t^3+1=(t+1)(t^2+t+1)$. If a matrix has $t^3+1$ as characteristic polynomial,
then it must also have $t^3+1$ as minimal polynomial hence it is similar both to the companion matrix of $t^3+1$
and to the matrix $A=\begin{bmatrix}
1 & 0 & 0 \\
0 & 0 & 1 \\
0 & 1 & 1
\end{bmatrix}$. Any matrix that commutes with $A$ must stabilize $\Ker(A-I_3)$ and $\im(A-I_3)$, therefore the centralizer of $A$ in
$\Mat_3(\F_2)$ is the set of matrices of the form
$\begin{bmatrix}
a & 0 \\
0 & B
\end{bmatrix}$ where $a \in \F_2$ and $B \in \Mat_2(\F_2)$ commutes with $\begin{bmatrix}
0 & 1 \\
1 & 1
\end{bmatrix}$. For such a matrix to be non-singular, it is necessary and sufficient that
$a=1$ and $B$ be non-singular, which leaves $3$ possibilities (notice that the centralizer of
the companion matrix $\begin{bmatrix}
0 & 1 \\
1 & 1
\end{bmatrix}$ in $\Mat_2(\F_2)$ is isomorphic to $\F_2[t]/(t^2+t+1) \simeq \F_4$).
We conclude that there are $7 \times 2 \times 4=56$ matrices in $\Mat_3(\F_2)$ with characteristic polynomial $t^3+1$.
\end{itemize}
\end{proof}

\begin{prop}
The space $\calV_2(\F_2)$ has 88 non-singular elements.
\end{prop}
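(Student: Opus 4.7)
The plan is to reduce to a transparent determinantal formula and then count by cases on the top-left $2\times 2$ block. Writing an arbitrary $X \in \calV_2(\F_2)$ as $\begin{bmatrix} M & C \\ L & a \end{bmatrix}$ with $M \in \frak{sl}_2(\F_2)$, $C \in \Mat_{2,1}(\F_2)$, $L \in \Mat_{1,2}(\F_2)$ and $a \in \F_2$, cofactor expansion along the last row yields the general identity $\det X = a \det M - L\,\mathrm{adj}(M)\,C$. The crucial observation is that in characteristic $2$, Cayley--Hamilton gives $M^2 = (\det M)\,I_2$ for every $M \in \frak{sl}_2(\F_2)$, hence $\mathrm{adj}(M) = M$, and the formula collapses to $\det X = a \det M + LMC$ in $\F_2$.

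From there the count splits on the value of $\det M$. The space $\frak{sl}_2(\F_2)$ has $8$ elements, of which $4$ are invertible and $4$ are singular, the singular ones being $0$, $E_{1,2}$, $E_{2,1}$ and $I_2 + E_{1,2} + E_{2,1}$. When $\det M = 1$, the condition $\det X = 1$ reads $a = 1 + LMC$, so exactly one value of $a$ works for each of the $16$ pairs $(L,C)$; this contributes $4 \times 16 = 64$ non-singular matrices.

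When $\det M = 0$, non-singularity reads $LMC = 1$ with $a$ free. The case $M = 0$ gives nothing. For $M = E_{1,2}$ one has $LMC = l_1 c_2$, so exactly $4$ pairs $(L,C)$ work and hence $8$ triples $(L,C,a)$; the case $M = E_{2,1}$ is symmetric; and for $M = I_2 + E_{1,2} + E_{2,1}$ one has $LMC = (l_1 + l_2)(c_1 + c_2)$, again producing $4$ pairs and $8$ triples. This yields $0 + 8 + 8 + 8 = 24$, for a grand total of $64 + 24 = 88$ non-singular elements in $\calV_2(\F_2)$.

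The main obstacle is really just the identification $\mathrm{adj}(M) = M$ on $\frak{sl}_2(\F_2)$ in characteristic $2$; once the clean formula $\det X = a \det M + LMC$ is in hand, the remainder is routine bookkeeping, with only a mild risk of miscounting among the three non-zero singular $M$'s.
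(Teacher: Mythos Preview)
Your proof is correct and follows essentially the same case split as the paper's: partition on whether $M\in\frak{sl}_2(\F_2)$ is invertible, singular non-zero, or zero, and count triples $(L,C,a)$. The one pleasant addition is your explicit use of the identity $\mathrm{adj}(M)=M$ on $\frak{sl}_2(\F_2)$ in characteristic~$2$, which yields the uniform formula $\det X=a\det M+LMC$ and lets you treat all three non-zero singular $M$'s directly, whereas the paper handles the rank-$1$ case by a WLOG reduction to $M=E_{1,2}$ and a bare determinant computation; the counts $64+24=88$ match exactly.
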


\begin{proof}
Let $M \in \frak{sl}_2(\F_2)$. We count the triples $(L,C,x)\in \Mat_{1,2}(\F_2) \times \Mat_{2,1}(\F_2) \times \F_2$
such that $\begin{bmatrix}
M & C \\
L & x
\end{bmatrix}$ is non-singular. If $M=0$, then there is no such triple. Assume $M$ is non-singular. Then
the former matrix has determinant $L\widetilde{M}C-x$, where $\widetilde{M}$ denotes the transpose of the matrix of cofactors of $M$.
Hence there are $2^4$ well-suited triples (we choose $L$ and $C$ freely, and then $x$ accordingly).
Since $\frak{sl}_2(\F_2)$ contains exactly $4$ non-singular matrices, we find $2^6$ non-singular matrices in $\calV_2(\F_2)$
of the former type. \\
Assume finally that $\rk M=1$. Then $M$ is nilpotent and we thus lose no generality assuming that $M=\begin{bmatrix}
0 & 1 \\
0 & 0
\end{bmatrix}$. However, given a $5$-tuple $(a,b,c,d,e)\in \K^5$, one has
$\begin{vmatrix}
0 & 1 & a \\
0 & 0 & b \\
c & d & e
\end{vmatrix}=bc$, hence there are $2^3$ well-suited triples $(L,C,x)$ for $M$.
Since $\frak{sl}_3(\F_2)$ contains exactly three rank $1$ matrices, we find that
$\calV_2(\F_2)$ contains exactly $2^6+3\times 2^3=88$ non-singular matrices.
\end{proof}

\subsection{The case of $\calV_1(\F_2)$}

Here, we prove the following result:

\begin{prop}
There exists a linear automorphism $f$ of $\calV_1(\F_2)$ which (strongly) preserves
non-singularity but does not extend to a Frobenius automorphism of $\Mat_3(\F_2)$.
\end{prop}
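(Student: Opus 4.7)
The plan is to exhibit an explicit non-Frobenius linear involution of $\calV_1(\F_2)$, built as a rank-one perturbation of the identity along $E_{1,2}$ and carefully tuned to the arithmetic of $\F_2$. Concretely, I would define the linear form $\ell(M):=m_{2,3}+m_{3,1}$ on $\calV_1(\F_2)$ and set
$$f : \calV_1(\F_2) \longrightarrow \calV_1(\F_2), \qquad f(M) := M + \ell(M)\, E_{1,2}.$$
Since $E_{1,2} \in \calV_1(\F_2)$, the map $f$ is a well-defined $\F_2$-linear endomorphism; and since $\ell(E_{1,2}) = 0$, we have $\ell\circ f = \ell$, hence $f\circ f = \id_{\calV_1(\F_2)}$ and $f$ is a linear automorphism.

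The core verification is that $\det f(M) = \det M$ for every $M\in\calV_1(\F_2)$, which over $\F_2$ is equivalent to the strong preservation of non-singularity. On the hyperplane $\{\ell = 0\}$ this is trivial because $f$ acts as the identity. On the complementary affine hyperplane $\{\ell = 1\}$, multilinearity of the determinant in the second column together with the defining identity $m_{3,3}=0$ on $\calV_1(\F_2)$ reduces the difference $\det f(M) - \det M$ to the $(1,2)$-cofactor of $M$, which simplifies to $m_{2,3}m_{3,1}$. The crucial observation is then that $m_{2,3} + m_{3,1} = 1$ in $\F_2$ forces one of the two entries to vanish, so this product is zero. It is exactly at this step that the hypothesis $\K \simeq \F_2$ is essential: over any larger field one can have $x+y=1$ and $xy\neq 0$.

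To check that $f$ does not extend to a Frobenius automorphism of $\Mat_3(\F_2)$, I would use that every such automorphism preserves rank. Here $\ell(E_{2,3})=1$, so $f(E_{2,3}) = E_{2,3}+E_{1,2}$, a matrix of rank $2$, while $E_{2,3}$ itself has rank $1$. The only genuinely non-trivial step in the whole argument is the initial search for a compatible pair $(\ell,\, E_{i,j})$ with $\ell(E_{i,j}) = 0$ such that the $(i,j)$-cofactor, restricted to $\calV_1(\F_2)$, vanishes on the hyperplane $\{\ell = 1\}$; once the pair $(m_{2,3}+m_{3,1},\, E_{1,2})$ is identified, everything collapses to the tautology $x + y = 1 \Rightarrow xy = 0$ in $\F_2$.
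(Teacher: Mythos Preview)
Your proof is correct. Your example $f(M)=M+(m_{2,3}+m_{3,1})E_{1,2}$ is precisely the transpose of the paper's example $f(M)=M+(m_{3,2}+m_{1,3})E_{2,1}$ (and $\calV_1(\F_2)$ is stable under transposition), so the two constructions are equivalent; your presentation is simply more direct, writing down the map and checking the three properties, whereas the paper first parametrizes a family $M\mapsto M+\alpha(L)+\beta(C)$, characterizes which members extend to Frobenius automorphisms, and then specializes.
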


\begin{proof}
Let $\alpha : \Mat_{1,2}(\F_2) \rightarrow \Mat_2(\F_2)$ and $\beta : \Mat_{2,1}(\F_2) \rightarrow \Mat_2(\F_2)$
be arbitrary linear maps. We will show that we may choose $\alpha$ and $\beta$ so that the linear automorphism
$$f : \begin{bmatrix}
M & C \\
L & 0
\end{bmatrix} \longmapsto \begin{bmatrix}
M+\alpha(L)+\beta(C) & C \\
L & 0
\end{bmatrix}$$
has the claimed properties.
\begin{itemize}
\item In order to do this, we first study on what conditions on $\alpha$ and $\beta$
the map $f$ may be extended to a Frobenius automorphism.
A sufficient condition is easy to find: if $\alpha : L \mapsto \begin{bmatrix}
a\,L \\
b\,L
\end{bmatrix}$ and $\beta : C \mapsto \begin{bmatrix}
c\,C & d\,C
\end{bmatrix}$ for some $(a,b,c,d)\in \F_2^4$, then $f(M)$ is simply obtained from $M$ by performing a series of row and column operations
(that is independent from $M$), hence $f$ clearly extends to a Frobenius automorphism. \\
Conversely, assume that $f=u_{P,Q}$ or $f=v_{P,Q}$ for some $(P,Q)\in \GL_3(\F_2)^2$. Notice for every $M \in \Mat_2(\F_2)$
that $u_{P,Q}$ fixes the matrix
$\begin{bmatrix}
M & 0 \\
0 & 0
\end{bmatrix}$ or maps it to its transpose, i.e.\ $u_{P,Q}$ fixes or transposes
every matrix with image $\Vect(e_1,e_2)$
and kernel $\Vect(e_3)$, where $(e_1,e_2,e_3)$ is the canonical basis of $\F_2^3$.
It easily follows that $P$ stabilizes $\Vect(e_1,e_2)$ and $Q$ stabilizes $\Vect(e_3)$, hence there are matrices
$C_1 \in \Mat_{2,1}(\F_2)$, $L_1 \in \Mat_{1,2}(\F_2)$ and matrices $P_1$ and $Q_1$ in $\GL_2(\F_2)$ such that
$P=\begin{bmatrix}
P_1 & C_1 \\
0 & 1
\end{bmatrix}$ and
$Q=\begin{bmatrix}
Q_1 & 0 \\
L_1 & 1
\end{bmatrix}$. Computing the image by $f$ of the previous matrices shows that
$\forall M \in \Mat_2(\F_2), \; P_1MQ_1=M$
or $\forall M \in \Mat_2(\F_2), \; P_1MQ_1=M^T$. In any case, taking $M=I_2$ yields $Q_1=P_1^{-1}$. \\
In the first case, $P_1$ commutes with every matrix of $\Mat_2(\F_2)$, which shows that $P_1=I_2=Q_1$, and we then notice
that $\alpha$ and $\beta$ have the aforementioned form. \\
However, the second case leads to a contradiction by taking every $M$ with zero as second column.
\end{itemize}
We now prove that $\alpha$ and $\beta$ may be chosen so that $f$ is not
a Frobenius automorphism although it is a determinant preserver.
Let $\begin{bmatrix}
M & C \\
L & 0
\end{bmatrix} \in \calV_1(\F_2)$. Its determinant is $L\,\widetilde{M}\,C$ (recall that $\widetilde{M}$ denotes the transpose of
the matrix of cofactors of $M$).
However, $M \mapsto \widetilde{M}$ is linear.
It follows that $f$ is a determinant preserver if (and only if)
$$\forall (L,C)\in \Mat_{1,2}(\F_2) \times \Mat_{2,1}(\F_2), \; L(\widetilde{\alpha(L)}+\widetilde{\beta(C)})\,C=0.$$
Taking
$$\alpha : \; \begin{bmatrix}
l_1 & l_2
\end{bmatrix} \mapsto \begin{bmatrix}
0 & 0 \\
l_2 & 0
\end{bmatrix} \quad \text{and} \quad \beta : \; \begin{bmatrix}
c_1 \\
c_2
\end{bmatrix} \mapsto \begin{bmatrix}
0 & 0 \\
c_1 & 0
\end{bmatrix},$$
it is obvious from the above necessary condition that $f$ is not a Frobenius automorphism,
However, for every $L=\begin{bmatrix}
l_1 & l_2
\end{bmatrix} \in \Mat_{1,2}(\F_2)$ and every
$C=\begin{bmatrix}
c_1 \\
c_2
\end{bmatrix} \in \Mat_{2,1}(\F_2)$, one has
$$L(\widetilde{\alpha(L)}+\widetilde{\beta(C)})\,C=l_2(l_2+c_1)c_1)=l_2^2c_1+l_2c_1^2=2 l_2c_1=0,$$
and therefore $f$ is a determinant preserver.
\end{proof}

\subsection{The case of $\calV_2(\F_2)$}

Here, we let $f : \calV_2(\F_2) \rightarrow \calV_2(\F_2)$ be a linear transformation which preserves non-singularity.
We wish to prove that Claim \ref{inverseimageclaim} holds in this situation.
We do this by analyzing the $5$-dimensional singular subspaces of $\calV_2(\F_2)$.
Recall from section \ref{redtointernalF2} (or prove this elementary fact directly) that $\calV_2(\F_2) \cap \calM_D$ and $\calV_2(\F_2) \cap \calM^D$
have dimension $5$ for any line $D \subset \F_2^3$.
In order to simplify the discourse, we will say that a $5$-dimensional singular subspace $V$ of $\Mat_3(\F_2)$ is:
\begin{itemize}
\item \textbf{maximal of the first kind} if equivalent to $\calR(1,1)$;
\item \textbf{maximal of the second kind} if equivalent to $\calJ_3(\F_2)$;
\item \textbf{non-maximal} if $V\subset \calM_D$ of $V\subset \calM^D$ for some line $D \subset \F_2^3$.
\end{itemize}
This terminology stems from the problem of maximality in the set of singular linear subspaces of $\Mat_3(\F_2)$ ordered by the inclusion of subsets.
Since $\F_2^3$ has $7$ non-zero vectors, and therefore $7$ one-dimensional subspaces,
$\calV_2(\F_2)$ has exactly fourteen non-maximal $5$-dimensional singular subspaces.
Let us now consider the maximal ones.

\begin{claim}\label{classfirsttype}
The linear subspace
$$\calF:=\Biggl\{
\begin{bmatrix}
0 & 0 & a \\
0 & 0 & b \\
c & d & e
\end{bmatrix} \mid (a,b,c,d,e)\in \F_2^5\Biggr\}$$
is the sole $5$-dimensional maximal singular subspace of the first kind in $\calV_2(\F_2)$.
\end{claim}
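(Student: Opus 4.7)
The plan is to characterize subspaces equivalent to $\calR(1,1)$ intrinsically, then exploit the defining condition of $\calV_2(\F_2)$ to pin down the only admissible one.

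First I would observe that a linear subspace of $\Mat_3(\F_2)$ is equivalent to $\calR(1,1)$ if and only if it has the form
$$W(T,D) := \bigl\{ M \in \Mat_3(\F_2) : M\,T \subset D \bigr\}$$
for some $2$-dimensional subspace $T \subset \F_2^3$ and some line $D \subset \F_2^3$. Indeed, $\calR(1,1)$ itself is $W(\Vect(e_2,e_3),\Vect(e_1))$, and the action $(P,Q).W(T,D)=W(QT,PD)$. Conversely, each $W(T,D)$ has dimension $3 + 1\cdot 2 = 5$ and clearly reduces to $\calR(1,1)$ via an appropriate change of basis. So showing uniqueness reduces to identifying the pairs $(T,D)$ for which $W(T,D)\subset \calV_2(\F_2)$.

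Next, I would use that $\calV_2(\F_2)^\bot = \F_2\,E$ with $E := E_{1,1}+E_{2,2}$, so the containment is equivalent to $\tr(EM)=0$ for every $M \in W(T,D)$. The key observation is that $W(T,D)$ contains the $3$-dimensional subspace $\calN_T := \{M \in \Mat_3(\F_2) : M\,T = 0\}$, and this subspace is exactly $\{w\phi \mid w \in \F_2^3\}$, where $\phi$ denotes any non-zero linear form with $\Ker \phi = T$. Since $\tr(E\,w\phi) = \phi(Ew)$, demanding that this vanish for every $w \in \F_2^3$ forces $\phi$ to vanish on $\im(E) = \Vect(e_1,e_2)$, i.e.\ $T=\Ker\phi = \Vect(e_1,e_2)$.

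With $T = \Vect(e_1,e_2)$ fixed, matrices in $W(T,D)$ are exactly those whose first two columns lie in $D$. Taking $M$ with first column $v \in D$ and second column zero (allowed since $v\in D$), the constraint $m_{1,1}+m_{2,2}=0$ gives $v_1=0$; switching roles gives $v_2=0$. Hence every $v \in D$ has $v_1=v_2=0$, forcing $D = \Vect(e_3)$. Writing out $W(\Vect(e_1,e_2),\Vect(e_3))$ yields precisely the matrices displayed in the statement, so $W(T,D)=\calF$.

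The only delicate point is the first paragraph: one must be sure that the structural characterization $V \simeq \calR(1,1) \Leftrightarrow V = W(T,D)$ uses no extraneous hypothesis and that no other normal form arises; this is immediate from inspecting $\calR(1,1)$, but it is worth being explicit since the whole argument then reduces to a short linear-algebra computation involving the rank-$2$ form $E$.
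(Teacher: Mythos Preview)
Your proof is correct and follows essentially the same approach as the paper's: both parametrize the spaces equivalent to $\calR(1,1)$ by a pair of data (the paper uses two lines $\Vect(X_1),\Vect(X_2)$ where you use the plane--line pair $(T,D)$, with $T=X_2^\bot$ and $D=\Vect(X_1)$), and both pin these down via orthogonality to the rank-$2$ matrix $E=J_2=E_{1,1}+E_{2,2}$ spanning $\calV_2(\F_2)^\bot$. The only cosmetic difference is that the paper handles both constraints symmetrically through the rank-$1$ generators $X_1Y^T$ and $YX_2^T$, whereas you treat $T$ via $\calN_T$ and then $D$ by a direct coordinate check; your argument for $D$ could equally have used the rank-$1$ matrices with image in $D$, mirroring your treatment of $T$.
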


\begin{proof}
Clearly, $\calF$ is equivalent to $\calR(1,1)$ and is included in $\calV_2(\F_2)$.
Conversely, set $J_2:=\begin{bmatrix}
I_2 & 0 \\
0 & 0
\end{bmatrix} \in \Mat_3(\F_2)$. Let $V$ be a $5$-dimensional maximal singular subspace in
$\calV_2(\F_2)$ of the first kind. Then there are two non-zero vectors $X_1$ and $X_2$ in $\F_2^3$
such that $\calV$ contains $X_1Y^T$ and $YX_2^T$ for every $Y \in \F_2^3$ and
$\calV$ is actually spanned by those matrices.
Writing that those matrices are orthogonal to $J_2$ for the symmetric bilinear form $(A,B) \mapsto \tr(AB)$, we find that
$J_2X_1=0$ and $X_2^TJ_2=0$, which shows that $X_1$ and $X_2$ are both scalar multiples of $\begin{bmatrix}
0 \\
0 \\
1
\end{bmatrix}$. This shows that $V=\calF$.
\end{proof}

\begin{claim}\label{classsecondtype}
There are exactly three $5$-dimensional maximal singular subspaces of the second kind in $\calV_2(\F_2)$.
One of them is
$$\calG:=\Biggl\{
\begin{bmatrix}
0 & a & c \\
0 & 0 & b \\
a+b & d & e
\end{bmatrix} \mid (a,b,c,d,e)\in \F_2^5\Biggr\}$$
and the two other ones may be obtained by conjugating $\calG$ with $\begin{bmatrix}
P & 0 \\
0 & 1
\end{bmatrix}$ for some $P \in \GL_2(\F_2)$.
\end{claim}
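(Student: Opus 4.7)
The proof decomposes naturally into three parts.

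\textbf{Verification of $\calG$.} I would first check that $\calG$ lies in $\calV_2(\F_2)$: the upper-left $2\times 2$ block is always the nilpotent $a E_{12}$, which has trace zero. Dimension $5$ is obvious from the parametrization, and every element is singular by cofactor expansion along the first column, yielding $(a+b)\cdot ab = 0$ in $\F_2$. To see $\calG$ is of the second kind, I would exhibit an explicit equivalence $P\calG Q^{-1} = \calJ_3(\F_2)$; letting $P$ swap rows $1,2$ and $Q$ swap columns $1,3$ sends a generic element of $\calG$ to a generic element of $\calJ_3(\F_2)$.

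\textbf{Orbit count.} For any $P \in \GL_2(\F_2)$, conjugation by $P' := \begin{bmatrix} P & 0 \\ 0 & 1 \end{bmatrix}$ preserves $\calV_2(\F_2)$ (since it conjugates the upper-left $2\times 2$ block within itself, preserving its trace) and preserves equivalence classes, so each $P'\calG (P')^{-1}$ is again a $5$-dimensional singular subspace of $\calV_2(\F_2)$ of the second kind. The stabilizer of $\calG$ under this action is governed by the requirement $PE_{12}P^{-1} = E_{12}$ (preservation of the upper-left structure $\F_2\cdot E_{12}$), whose solutions form the order-$2$ subgroup $\{I_2, I_2+E_{12}\}$. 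Hence the orbit has size $|\GL_2(\F_2)|/2 = 3$.

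\textbf{Uniqueness.} Let $V \subset \calV_2(\F_2)$ be any $5$-dimensional singular subspace of the second kind. The crucial structural input is that $\calJ_3(\F_2)$ preserves a unique flag $\Vect(e_3) \subset \Vect(e_2,e_3)$ on the input side and a unique such flag on the output side; this rigidity is checked by intersecting the stable lines and stable hyperplanes of a few explicit generators (e.g.\ $E_{21}$, $E_{32}$, and a rank-$2$ diagonal element). Transporting this across any equivalence $V = P_0 \calJ_3(\F_2) Q_0^{-1}$, one obtains canonical flags $D_{\mathrm{in}} \subset H_{\mathrm{in}}$ and $D_{\mathrm{out}} \subset H_{\mathrm{out}}$ in $\F_2^3$ such that $V\cdot D_{\mathrm{in}} \subset D_{\mathrm{out}}$ and $V\cdot H_{\mathrm{in}} \subset H_{\mathrm{out}}$, and $V$ is forced to be the ``trace-zero'' hyperplane of the associated $6$-dimensional flag-preserving subspace. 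Writing the containment $V \subset \calV_2(\F_2)$ as $N := E_{11}+E_{22} \in V^\perp$ and translating to bases $A,B$ adapted to the input/output flags turns it into the condition $A^{-1}NB \in \calJ_3(\F_2)^\perp$. A quick computation identifies $\calJ_3(\F_2)^\perp$ with the space of lower triangular matrices with constant diagonal, whose only rank-$2$ elements are the two strictly lower triangular matrices of rank $2$. Solving the resulting equations on $A,B$ pins the flag pair down to $H_{\mathrm{in}} = \Vect(e_1,e_2)$, $D_{\mathrm{out}} = \Vect(e_3)$, $H_{\mathrm{out}} = D_{\mathrm{in}}+\Vect(e_3)$, with $D_{\mathrm{in}}$ a line of $\Vect(e_1,e_2)$. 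The three resulting possibilities match precisely the orbit found in Part $2$, since $\calG$ itself corresponds to $D_{\mathrm{in}} = \Vect(e_1)$ and the action of $\GL_2(\F_2)$ on the three lines of $\Vect(e_1,e_2)$ is transitive. The main obstacle is this third step, in particular establishing the flag rigidity of $\calJ_3(\F_2)$ and then performing the coordinate computation that shrinks the space of admissible flag pairs to exactly three.
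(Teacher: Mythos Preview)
Your proof is correct and reaches the same conclusion, but by a genuinely different route than the paper. The paper's argument is a global double count: it first observes that $\calJ_3(\F_2)^\perp$ contains exactly two rank-$2$ matrices, hence $\calJ_3(\F_2)$ lies in exactly two hyperplanes equivalent to $\calV_2(\F_2)$; then, writing $p$ for the (equivalence-invariant) number of second-kind subspaces inside $\calV_2(\F_2)$, it sets up the identity $p\,n_1 = 2\,n_2$, computes $n_1$ as the number of rank-$2$ matrices in $\Mat_3(\F_2)$ and $n_2$ via the stabilizer of $\calJ_3(\F_2)$ in $\GL_3(\F_2)^2$ (all pairs of lower-triangular matrices), and reads off $p=3$. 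Only afterwards does it exhibit the three conjugates and distinguish them by their projection to the upper-left $2\times 2$ block.

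Your approach instead first produces three candidates via the $\GL_2$-orbit and then proves directly that there are no others, using the flag rigidity of $\calJ_3(\F_2)$ together with the constraint $Q_0^{-1} N P_0 \in \calJ_3(\F_2)^\perp$. Interestingly, both arguments ultimately rest on the same computation of $\calJ_3(\F_2)^\perp$ and the fact that its rank-$2$ locus has two elements: you use this to read off the kernel and image of $Q_0^{-1}NP_0$ and hence pin down $H_{\mathrm{in}}$ and $D_{\mathrm{out}}$, while the paper uses it as the factor ``$2$'' in its counting identity. The paper's double count is slicker and avoids the coordinate verification that $H_{\mathrm{out}} = D_{\mathrm{in}} + \Vect(e_3)$ (which in your sketch is asserted rather than shown, though it does go through). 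Your structural argument, on the other hand, is more explicit about \emph{why} exactly three subspaces arise and gives their flag data intrinsically. One small point: in Part~2 your stabilizer computation establishes that $P E_{12} P^{-1} = E_{12}$ is \emph{necessary}; you should also check that both upper-triangular $P$ actually stabilize the full space $\calG$ (they do), not merely its upper-left block.
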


\begin{proof}
Obviously, $\calG$ is equivalent to $\calJ_3(\F_2)$ and is a linear subspace of $\calV_2(\F_2)$. \\
Also, the number $p$ of $5$-dimensional maximal singular subspaces of the second kind
in an hyperplane $V$ which is equivalent to $\calV_2(\F_2)$ is independent from the given $V$.
We now resort to a counting argument. Notice that the orthogonal subspace of
$\calJ_3(\F_2)$ (for $(A,B) \mapsto \tr(AB)$) is the subspace
$$\Biggl\{
\begin{bmatrix}
a & 0 & 0 \\
b & a & 0 \\
d & c & a
\end{bmatrix} \mid (a,b,c,d)\in \F_2^4\Biggr\}:$$
it contains exactly two rank $2$ matrices, hence
$\calJ_3(\F_2)$ is contained in exactly two linear hyperplanes that are equivalent to $\calV_2(\F_2)$
(being equivalent to $\calV_2(\F_2)$ being the same, for a hyperplane of $\Mat_3(\F_2)$, as being orthogonal to a rank $2$ matrix).
It follows from a standard counting argument that $p\,n_1=2\,n_2$,
where $n_1$ denotes the number of hyperplanes in $\Mat_3(\F_2)$ which are equivalent to $\calV_2(\F_2)$,
and $n_2$ the number of $5$-dimensional maximal singular subspaces of the second kind in $\Mat_3(\F_2)$.
\begin{itemize}
\item Clearly, $n_1$ is the number of rank $2$ matrices of $\Mat_3(\F_2)$, hence
$n_1=7\times 7 \times 6$ (there are $7$ possibilities for the kernel of such a matrix and $7 \times 6$ ones for a
linearly independent $2$-tuple in $\F_2^3$).
\item Denote by $(e_1,e_2,e_3)$ the canonical basis of $\F_2^3$.
Let $X \in \F_2^3 \setminus \{0\}$ . Set $\calR_X:=\bigl\{YX^T \mid Y \in \F_2^3\bigr\}$ and
$\calR^X:=\bigl\{XY^T \mid Y \in \F_2^3\bigr\}$.
Notice then that $\calR_X \cap \calJ_3(\F_2)$ has dimension $0$ if $X \in \F_2^3 \setminus \Vect(e_1,e_2)$,
dimension $1$ if $X \in \Vect(e_1,e_2) \setminus \Vect(e_1)$, and dimension $2$ if $X \in \Vect(e_1)$.
Similarly, $\calR^X \cap \calJ_3(\F_2)$ has dimension $0$ if $X \in \F_2^3 \setminus \Vect(e_2,e_3)$,
dimension $1$ if $X \in \Vect(e_2,e_3) \setminus \Vect(e_3)$, and dimension $2$ if $X \in \Vect(e_3)$. \\
This shows that if some pair $(P,Q)\in \GL_3(\F_2)^2$ satisfies
$P\,\calJ_3(\F_2)\,Q^{-1}=\calJ_3(\F_2)$, then $P$ must stabilize $\Vect(e_3)$ and $\Vect(e_2,e_3)$,
whilst $Q^T$ must stabilize $\Vect(e_1)$ and $\Vect(e_1,e_2)$, hence both $P$ and $Q$
are lower triangular. Conversely $P\,\calJ_3(\F_2)\,Q^{-1}=\calJ_3(\F_2)$ for every pair $(P,Q)$ of lower triangular matrices in $\GL_3(\F_2)$.
Since there are $8^2$ such pairs and $\# \GL_3(\F_2)=7 \times 6 \times 4$, we deduce that
$$n_2=\frac{7^2 \times 6^2 \times 4^2}{8^2}=7^2 \times 3^2.$$
\end{itemize}
The previous formulae then yield $p=3$. \\
Let finally $M$ be a non-zero nilpotent matrix of $\Mat_2(\F_2)$.
Set $B:=\begin{bmatrix}
0 & 1 \\
0 & 0
\end{bmatrix}$. Let $P \in \GL_2(\F_2)$ be such that $PBP^{-1}=M$ and set $Q=\begin{bmatrix}
P & 0 \\
0 & 1
\end{bmatrix}$. Then $Q\calG Q^{-1}$ is a $5$-dimensional maximal singular subspace of $\calV_2(\F_2)$ of the second kind and
the projection of $\calG$ onto the first $2 \times 2$ block is $\Vect(M)$.
Since $3$ distinct lines of $\frak{sl}_2(\F_2)$ may be obtained in this manner (there are three non-zero nilpotent matrices in
$\frak{sl}_2(\F_2)$), we deduce that this yields three $5$-dimensional maximal singular subspaces of $\calV_2(\F_2)$ of the second kind,
hence we have found them all.
\end{proof}

In the rest of the proof, we denote by $(e_1,e_2,e_3)$ the canonical basis of $\F_2^3$.

\begin{claim}\label{twokinds}
Let $V$ be a $5$-dimensional singular subspace of $\calV_2(\F_2)$. Then:
\begin{enumerate}[(a)]
\item Either $V \subset \calM_D$ or $V \subset \calM^D$ for some line $D \subset \F_2^3$ not included in $\Vect(e_1,e_2)$;
then $\dim(V \cap V') \leq 3$ for every other $5$-dimensional singular subspace $V'$ of $\calV_2(\F_2)$;
\item Or there exists a $5$-dimensional singular subspace $V'$ of $\calV_2(\F_2)$ such that
$\dim(V \cap V')=4$.
\end{enumerate}
\end{claim}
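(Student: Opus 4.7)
The approach is to combine the classification of $5$-dimensional singular subspaces of $\Mat_3(\F_2)$ from Theorem~2 of \cite{dSPclass} with Claims~\ref{classfirsttype} and \ref{classsecondtype}: any such subspace lying in $\calV_2(\F_2)$ is one of the $14$ non-maximal ones $\calV_2(\F_2) \cap \calM_D$ or $\calV_2(\F_2) \cap \calM^D$ for $D \in \Pgros(\F_2^3)$, the unique maximal of the first kind $\calF$, or one of the three maximal of the second kind. The key symmetries to leverage are transposition (which stabilizes $\calV_2(\F_2)$ and swaps $\calM_D$ with $\calM^D$) and the subgroup of the equivalence group stabilizing $\calV_2(\F_2)$, parametrized by the pairs $(P,Q) \in \GL_3(\F_2)^2$ with $P=\begin{bmatrix} A & 0 \\ C & 1 \end{bmatrix}$ and $Q=\begin{bmatrix} A & B \\ 0 & 1 \end{bmatrix}$ (for $A\in \GL_2(\F_2)$ and $B,C \in \F_2^2$); this subgroup acts on lines via $D \mapsto QD$ on the $\calM_D$ side, transitively on the three lines contained in $\Vect(e_1,e_2)$ and separately on the four lines not contained in $\Vect(e_1,e_2)$, and analogously on the $\calM^D$ side via $D \mapsto P^{-T}D$.

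For part (b), by these symmetries it suffices to exhibit $V'$ with $\dim(V\cap V')=4$ in three representative cases. When $V = \calV_2(\F_2) \cap \calM_{\Vect(e_1)}$, imposing $a+b=0$ in the parametrization of $\calG$ yields $V \cap \calG$ of dimension $4$; when $V=\calF$ or $V=\calG$, the subspace $\calV_2(\F_2) \cap \calM_{\Vect(e_1)}$ itself intersects $V$ in dimension $4$ by direct substitution.

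For part (a), the symmetries reduce the analysis to $V = \calV_2(\F_2) \cap \calM_{\Vect(e_3)}$, which is the set of matrices in $\calV_2(\F_2)$ with zero third column. I would then verify $\dim(V\cap V')\leq 3$ for each of the remaining $17$ subspaces $V'$, organized by type. If $V' = \calV_2(\F_2) \cap \calM_{D'}$ with $D' \neq \Vect(e_3)$, then $V \cap V' \subset \calM_{\Vect(e_3)+D'}$, which has dimension $3$ since $\Vect(e_3)+D'$ is $2$-dimensional. If $V' = \calV_2(\F_2) \cap \calM^{D'}$, then $\calM_{\Vect(e_3)} \cap \calM^{D'}$ is $4$-dimensional (it identifies with linear maps from $\F_2^3/\Vect(e_3)$ to $D'^\bot$), and the condition $m_{1,1}=m_{2,2}$ defining $\calV_2(\F_2)$ is a non-trivial linear form on this space because $D'^\bot$, being $2$-dimensional, cannot be contained in $\Vect(e_3)$; hence the intersection has dimension $3$. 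Finally, if $V' = \calF$ or $V'$ is one of the three maximal of the second kind, direct substitution of the ``zero third column'' constraint into the explicit parametrizations yields a $2$-dimensional intersection in every case. The main obstacle is the maximal-of-second-kind sub-case of (a), which requires handling each of the three subspaces from Claim~\ref{classsecondtype} individually; the calculation is elementary but repetitive, and in every case reduces to observing that the ``zero third column'' constraint fixes at least $3$ of the $5$ free parameters in the parametrization.
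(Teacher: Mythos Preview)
Your proposal is correct and follows essentially the same approach as the paper's proof: reduce by the symmetries of $\calV_2(\F_2)$ (transposition and the stabilizer subgroup you describe) to a single representative $D$, then run through the finite list of possible $V'$ case by case. The only noteworthy differences are cosmetic: in part (b) the paper pairs $\calV_2(\F_2)\cap\calM_{\Vect(e_1)}$ with $\calF$ rather than $\calG$; in part (a) the paper actually computes $\dim(V\cap V')=2$ in the $\calM_{D'}$ and maximal sub-cases (sharper than your $\leq 3$, though the claim only needs $\leq 3$); and the paper uses the conjugation symmetry of Claim~\ref{classsecondtype} to reduce the three second-kind subspaces to $\calG$ alone, sparing you the repetitive check you anticipate.
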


\begin{proof}
In this proof, we will simply write $\calV_2$ instead of $\calV_2(\F_2)$ to lighten the burden of notations. \\
Assume first that $V \subset \calM_D$ or $V \subset \calM^D$ for some line $D \subset \F_2^3$ not included in $\Vect(e_1,e_2)$.
By transposing, we lose no generality assuming that $V \subset \calM_D$.
We also lose no generality assuming that $D=\Vect(e_3)$.
\begin{itemize}
\item Let $D' \subset \Vect(e_1,e_2,e_3)$ be an arbitrary line distinct from $D$.
A straightforward computation shows that $\dim(\calV_2 \cap \calM_D \cap \calM_{D'})=2$
(notice that we lose no generality assuming $D+D'=\Vect(e_2,e_3)$ for this computation).
\item Let $D' \subset \Vect(e_1,e_2,e_3)$ be an arbitrary line.
Write every matrix $M$ of $\Mat_3(\F_2)$ as
$\begin{bmatrix}
G(M) & ?
\end{bmatrix}$ with $G(M) \in \Mat_{3,2}(\F_2)$.
On the one hand $G(\calV_2 \cap \calM_D)=G(\calV_2)$ is a hyperplane of $\Mat_{3,2}(\F_2)$
and its orthogonal subspace for $b : (A,B) \mapsto \tr(A^TB)$ is $\Vect \begin{bmatrix}
I_2 \\
0
\end{bmatrix}$. On the other hand, the orthogonal subspace of $G(\calM^{D'})$ contains no rank $2$ matrix,
hence $G(\calM^{D'}) \not\subset G(\calV_2)$, and it follows that
$G(\calV_2 \cap \calM_D \cap \calM^{D'})=G(\calV_2) \cap G(\calM^{D'})$ is a hyperplane of $G(\calM^{D'})$
hence
$$\dim (\calV_2 \cap \calM_D \cap \calM^{D'})=\dim G(\calV_2 \cap \calM_D \cap \calM^{D'})=4-1=3.$$
\item Obviously $\dim(V \cap \calF)=2$
and $\dim(V \cap \calG)=2$. Claims \ref{classfirsttype} and \ref{classsecondtype} then entail that $\dim(V \cap V')=2$
for every $5$-dimensional maximal singular subspace $V'$ of $\calV_2$.
\end{itemize}
Assume now that $V=\calV_2 \cap \calM_D$ for some line $D \subset \Vect(e_1,e_2)$.
Then we lose no generality assuming that $D=\Vect(e_1)$. In this case, we have
$\dim(V \cap \calF)=4$. The same obviously holds when $V=\calV_2(\F_2) \cap \calM^D$ for some line $D \subset \Vect(e_1,e_2)$. \\
Assume finally that $V=\calF$ or $V=\calG$. Then clearly $\dim(V \cap \calM_{D_1})=4$ for $D_1=\Vect(e_1)$.
Using Claim \ref{classsecondtype}, this finishes the proof of Claim \ref{twokinds}.
\end{proof}

In the course of the above proof, we have also obtained the following result:

\begin{claim}\label{stabX}
Let $D$ be a line included in $\F_2^3$ but not in $\Vect(e_1,e_2)$.
Set $V:=\calV_2(\F_2) \cap \calM_D$ (resp.\ $V:=\calV_2(\F_2) \cap \calM^D$)
and let $V'$ be a $5$-dimensional singular subspace of $\calV_2(\F_2)$.
Then $\dim(V \cap V')=3$ if and only if $V'=\calV_2(\F_2) \cap \calM^{D'}$
for some line $D' \subset \F_2^3$ (resp.\ $V'=\calV_2(\F_2) \cap \calM_{D'}$
for some line $D' \subset \F_2^3$).
\end{claim}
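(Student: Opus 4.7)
The plan is to extract Claim~\ref{stabX} directly from the dimension computations already performed during the proof of Claim~\ref{twokinds}. First I would treat the case $V:=\calV_2(\F_2) \cap \calM_D$; the ``resp.''\ case will then follow by applying the transposition $M \mapsto M^T$, which is a linear involution preserving rank and singularity, stabilizing $\calV_2(\F_2)$ (since $\calV_2(\F_2)$ is the orthogonal, for $(A,B) \mapsto \tr(AB)$, of the symmetric matrix $E_{1,1}+E_{2,2}$), and exchanging $\calM_X$ with $\calM^X$ for every line $X \subset \F_2^3$.

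The starting observation is that Claims~\ref{classfirsttype} and~\ref{classsecondtype}, together with the classification of $5$-dimensional singular subspaces of $\Mat_3(\F_2)$ recalled at the beginning of Section~\ref{F2}, provide an exhaustive list of candidates for a $5$-dimensional singular subspace $V'$ of $\calV_2(\F_2)$: either $V'$ is non-maximal, in which case $V' = \calV_2(\F_2) \cap \calM_{D'}$ or $V'=\calV_2(\F_2) \cap \calM^{D'}$ for some line $D' \subset \F_2^3$, or $V'$ is maximal, in which case $V' = \calF$ or $V'$ is one of the three conjugates of $\calG$ described in Claim~\ref{classsecondtype}.

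Under the assumption $D \not\subset \Vect(e_1,e_2)$, the proof of Claim~\ref{twokinds} has already established three facts: that $\dim(V \cap \calV_2(\F_2) \cap \calM_{D'})=2$ for every line $D' \neq D$; that $\dim(V \cap \calV_2(\F_2) \cap \calM^{D'})=3$ for every line $D'$; and that $\dim(V \cap V')=2$ for every $5$-dimensional maximal singular subspace $V'$ of $\calV_2(\F_2)$. Combining these cases with the trivial observation that $\dim(V \cap V)=5$, I immediately obtain that $\dim(V \cap V')=3$ holds if and only if $V' = \calV_2(\F_2) \cap \calM^{D'}$ for some line $D'$, which is the desired equivalence.

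The main obstacle is essentially absent: the argument amounts to organizing dimension data already present in the proof of Claim~\ref{twokinds}. The only small verification required is that transposition swaps the two types of non-maximal singular subspaces of $\calV_2(\F_2)$, which is immediate from $\calM_D^T = \calM^D$ and $\calV_2(\F_2)^T = \calV_2(\F_2)$.
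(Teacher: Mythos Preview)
Your proposal is correct and follows exactly the paper's approach: the paper introduces Claim~\ref{stabX} with the sentence ``In the course of the above proof, we have also obtained the following result,'' meaning precisely that the dimension computations inside the proof of Claim~\ref{twokinds} already yield the equivalence. Your explicit handling of the ``resp.''\ case via transposition is a minor elaboration of what the paper does implicitly in the same proof.
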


Recall now that $f : \calV_2(\F_2) \rightarrow \calV_2(\F_2)$ is a linear bijection which (strongly) preserves
non-singularity. Then $f$ permutes the $5$-dimensional singular subspaces of $\calV_2(\F_2)$.
Set
$$\calX:=\bigl\{\calV_2(\F_2) \cap \calM_{\Vect(x)}\mid x \in \F_2^3 \setminus \Vect(e_1,e_2)\bigr\}\,
\bigcup\, \bigl\{\calV_2(\F_2) \cap \calM^{\Vect(x)}\mid x \in \F_2^3 \setminus \Vect(e_1,e_2)\bigr\}.$$
Then Claim \ref{twokinds} clearly entails that $f$ must stabilize $\calX$.
We then lose no generality (left-composing $f$ with $M \mapsto M^T$ if necessary) assuming that there are four lines
$D_1$, $D'_1$, $D_2$, $D'_2$, with $D_1 \neq D_2$, and none of them included in $\F_2^2 \times \{0\}$, such that
$f(\calV_2(\F_2) \cap \calM_{D_1})=\calV_2(\F_2) \cap \calM_{D'_1}$ and
 $f(\calV_2(\F_2) \cap \calM_{D_2})=\calV_2(\F_2) \cap \calM_{D'_2}$.
Claim \ref{inverseimageclaim} then easily follows from Claim \ref{stabX},
and then the rest of Section \ref{strongstart} shows that $f$ extends to a Frobenius automorphism of $\Mat_3(\F_2)$.

\subsection{The case of $\frak{sl}_3(\F_2)$}

Here, we let $f : \frak{sl}_3(\F_2) \rightarrow \frak{sl}_3(\F_2)$ be a bijective linear transformation which preserves non-singularity.
Note again that $f$ is a determinant preserver. Our aim is to prove Claim \ref{inverseimageclaim} in this situation.
This has the following three steps:

\begin{lemme}
No linear subspace of $\frak{sl}_3(\F_2)$ is equivalent to $\calR(1,1)$.
\end{lemme}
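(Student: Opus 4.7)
The plan is to exploit the fact that any subspace equivalent to $\calR(1,1)$ must contain a canonical $3$-dimensional ``column subspace'' of rank-one matrices sharing a common row direction, and then observe that such a subspace always picks up a matrix of nonzero trace, contradicting inclusion in $\frak{sl}_3(\F_2)$.

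More concretely, I would start by recalling that $\calR(1,1) \subset \Mat_3(\F_2)$ contains the $3$-dimensional subspace $\calN := \Vect(E_{1,1},E_{2,1},E_{3,1}) = \{Z e_1^T \mid Z \in \F_2^3\}$ (the matrices supported on the first column). Now suppose, for contradiction, that $V \subset \frak{sl}_3(\F_2)$ is equivalent to $\calR(1,1)$, so that $V = P\,\calR(1,1)\,Q^{-1}$ for some $(P,Q) \in \GL_3(\F_2)^2$. Then $V$ contains
$$P\,\calN\,Q^{-1} = \bigl\{(PZ)(Q^{-T}e_1)^T \mid Z \in \F_2^3\bigr\} = \bigl\{Z'\,w^T \mid Z' \in \F_2^3\bigr\},$$
where $w := Q^{-T}e_1 \in \F_2^3 \setminus \{0\}$; this is again a $3$-dimensional subspace, since $P$ is a bijection.

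Finally, I would compute the trace: for any $Z' \in \F_2^3$, one has $\tr(Z'\,w^T) = w^T Z'$. As $w \neq 0$, the non-degeneracy of the standard bilinear form on $\F_2^3$ yields some $Z' \in \F_2^3$ with $w^T Z' = 1$, and therefore $Z'w^T \in V$ has nonzero trace, contradicting $V \subset \frak{sl}_3(\F_2)$. I do not anticipate any serious obstacle; the whole argument is a direct unwinding of the equivalence relation and a single trace computation, and it in fact works over any field (the proposition is stated for $\F_2$ only because that is the context of this section).
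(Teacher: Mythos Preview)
Your argument is correct and is essentially the same as the paper's: the paper observes that a subspace equivalent to $\calR(1,1)$ would force $\frak{sl}_3(\F_2)$ to contain every matrix vanishing on some $2$-dimensional subspace $P$ of $\F_2^3$, one of which has nonzero trace --- and those matrices are precisely your $\{Z'w^T : Z'\in\F_2^3\}$ with $w$ spanning $P^\bot$. Your write-up merely makes the equivalence $V=P\,\calR(1,1)\,Q^{-1}$ and the trace computation explicit.
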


\begin{proof}
Indeed, if there were such a linear subspace, then there would be a $2$-dimensional linear subspace $P$
of $\F_2^3$ such that $\frak{sl}_3(\F_2)$ contains every matrix which vanishes on $P$, one of which
has a non-zero trace.
\end{proof}

Using the line of reasoning from Section \ref{strongstart}, it thus suffice to prove the following:

\begin{claim}\label{lastcompatclaim}
For every line $D \subset \F_2^3$, neither $f^{-1}(\calM_D)$ nor $f^{-1}(\calM^D)$ is equivalent to $\calJ_3(\F_2)$.
\end{claim}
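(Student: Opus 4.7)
The plan is to attach to each $5$-dimensional singular subspace $V$ of $\frak{sl}_3(\F_2)$ the combinatorial invariant
\[
n(V):=\#\{W\subset \frak{sl}_3(\F_2):W\ \text{is $5$-dim singular},\ W\neq V,\ \dim(V\cap W)=4\}.
\]
Over $\F_2$, $f$ preserves singularity too (by cardinality on the finite group $\frak{sl}_3(\F_2)$), so it bijects the set of $5$-dim singular subspaces of $\frak{sl}_3(\F_2)$ while preserving all intersection dimensions, giving $n(V)=n(f(V))$. The conjugation action of $\GL_3(\F_2)$ is transitive on the $21$ subspaces of $\frak{sl}_3(\F_2)$ equivalent to $\calJ_3(\F_2)$ (orbit size $\#\GL_3(\F_2)/8=21$, the stabilizer being the group of $8$ invertible lower-triangular matrices) as well as on the $7$ subspaces $\calM_D\cap\frak{sl}_3(\F_2)$; hence $n$ is constant on each type, and it suffices to show the values differ.

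To compute $n(\calM_D\cap\frak{sl}_3(\F_2))$, combine the preceding lemma with the Atkinson--Lloyd classification (Theorem~2 of \cite{dSPclass}): every candidate neighbor $W$ is of kernel-, image-, or $\calJ_3(\F_2)$-type. A coordinate calculation gives $\dim(\calM_D\cap\calM_{D'}\cap\frak{sl}_3(\F_2))=2$ for $D'\neq D$ and $\dim(\calM_D\cap\calM^{D'}\cap\frak{sl}_3(\F_2))=3$ for all $D'$, so no non-maximal $W$ qualifies. For $W=P\calJ_3(\F_2)Q^{-1}$, the substitution $M=PM'Q^{-1}$ turns $\dim(\calM_D\cap W)=4$ into $\dim\{M'\in\calJ_3(\F_2):M'(Q^{-1}e_D)=0\}=4$; the seven dimensions in $\{2,3,4\}$ computed in the proof of Claim \ref{classsecondtype} show that this happens exactly when $Q^{-1}e_D$ spans the distinguished line of $\calJ_3(\F_2)$. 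An orbit count against the $8^2$-element $(P,Q)$-stabilizer of $\calJ_3(\F_2)$ then yields exactly $3$ such $W$. Therefore $n(\calM_D\cap\frak{sl}_3(\F_2))=3$, and $n(\calM^D\cap\frak{sl}_3(\F_2))=3$ by transposition.

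Next I show $n(\calJ_3(\F_2))\geq 4$ by exhibiting four pairwise distinct neighbors. Two are visible from Claim \ref{classsecondtype}: the $4$-dimensional subspaces $\calJ_3(\F_2)\cap\calM_{\Vect(e_3)}$ and $\calJ_3(\F_2)\cap\calM^{\Vect(e_1)}$ identify $\calM_{\Vect(e_3)}\cap\frak{sl}_3(\F_2)$ and $\calM^{\Vect(e_1)}\cap\frak{sl}_3(\F_2)$ as neighbors. The other two are
\[
V_c:=\{M\in\calJ_3(\F_2):M_{2,1}=0\}+\Vect(E_{1,2}),\qquad V_e:=\{M\in\calJ_3(\F_2):M_{3,2}=0\}+\Vect(E_{2,3}).
\]
Expanding the determinant along the first row shows every element of $V_c$ or $V_e$ has determinant of the form $ab(a+b)$, which vanishes in $\F_2$; so both are $5$-dim singular subspaces of $\frak{sl}_3(\F_2)$. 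Letting the parameters vary, one checks neither admits a common kernel direction nor a common image direction, so by Atkinson--Lloyd combined with the preceding lemma both are of $\calJ_3(\F_2)$-type. The four candidates are pairwise distinct because $E_{1,2}\in V_c\setminus V_e$ and $E_{1,1}+E_{3,3}\in(V_c\cap V_e)\setminus(\calM_{\Vect(e_3)}\cup\calM^{\Vect(e_1)})$. Therefore $n(\calJ_3(\F_2))\geq 4>3=n(\calM_D\cap\frak{sl}_3(\F_2))$, contradicting the hypothesis that $f^{-1}(\calM_D)\simeq\calJ_3(\F_2)$; the argument for $\calM^D$ is identical.

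The main technical difficulty is the orbit count giving $n(\calM_D\cap\frak{sl}_3(\F_2))=3$, since it depends on the explicit $(P,Q)$-stabilizer of $\calJ_3(\F_2)$ and on the dimensions $\dim(\calJ_3(\F_2)\cap\calM_{\Vect(y)})$ for the seven non-zero lines $\Vect(y)$; the remaining bookkeeping, including the verification that $V_c$ and $V_e$ are singular and of $\calJ_3(\F_2)$-type, is routine.
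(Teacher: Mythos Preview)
Your argument is correct and takes a genuinely different route from the paper. The paper proves the claim by first showing (Lemma~\ref{sl23rankpreserver}) that $f$ preserves rank---this is done via the trace identity $\tr A^3=\det A$ on $\frak{sl}_3(\F_2)$ and a centralizer-dimension argument---and then observing (Lemma~\ref{lastcountlemma}) that $\calJ_3(\F_2)$ contains exactly five rank-$1$ matrices while $\calM_D\cap\frak{sl}_3(\F_2)$ and $\calM^D\cap\frak{sl}_3(\F_2)$ each contain more than five. Your approach bypasses the rank-preservation result entirely and works directly with the incidence structure of the $5$-dimensional singular subspaces: the invariant $n(V)$ is preserved by $f$ automatically, and the distinction $n=3$ versus $n\geq 4$ separates the two types. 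This is more combinatorial and self-contained; the paper's route, in exchange, yields the stronger intermediate fact that $f$ is a rank preserver.

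Two minor points should be tightened. First, the dimensions $\dim(\calJ_3(\F_2)\cap\calM_{\Vect(y)})$ are not actually computed in the proof of Claim~\ref{classsecondtype}: what appears there is $\dim(\calR_X\cap\calJ_3(\F_2))\in\{0,1,2\}$, a different quantity. Your needed dimensions are easy to compute directly and do lie in $\{2,3,4\}$, with $\Vect(e_3)$ the unique line giving $4$, so the conclusion stands. Second, you assert that $\frak{sl}_3(\F_2)$ contains exactly $21$ subspaces equivalent to $\calJ_3(\F_2)$ and that the conjugation orbit of $\calJ_3(\F_2)$ has size $21$, whence transitivity. The orbit size is fine, but the total count of $21$ deserves a line of justification: if $P\calJ_3(\F_2)Q^{-1}\subset\frak{sl}_3(\F_2)$ then $Q^{-1}P\in\calJ_3(\F_2)^\perp\cap\GL_3(\F_2)$, which forces $Q^{-1}P$ to be lower triangular, and since lower-triangular matrices stabilize $\calJ_3(\F_2)$ on the left one gets $P\calJ_3(\F_2)Q^{-1}=Q\calJ_3(\F_2)Q^{-1}$. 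So every such subspace is already a conjugate, and with these two clarifications your proof is complete.
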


To prove this, we establish two lemmas:

\begin{lemme}\label{lastcountlemma}
There are five rank $1$ matrices in $\calJ_3(\F_2)$. \\
For any $D \in \F_2^3$, there are more than five rank $1$ matrices in $\calM_D \cap \frak{sl}_3(\F_2)$,
and the same holds for $\calM^D \cap \frak{sl}_3(\F_2)$.
\end{lemme}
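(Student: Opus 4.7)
The plan is to handle the two assertions by elementary case analysis. For the first, I would take a general element $M=\begin{bmatrix} a & 0 & 0 \\ c & b & 0 \\ d & e & a+b \end{bmatrix}$ of $\calJ_3(\F_2)$ and split according to the value of $(a,b) \in \F_2^2$. In each of the three cases $(a,b) \in \{(0,1),(1,0),(1,1)\}$, the matrix has two ones on the diagonal in distinct rows, so two of its columns are visibly linearly independent and $\rk M \geq 2$, contributing nothing. Only the case $a=b=0$ remains, and there $M$ is entirely determined by the triple $(c,d,e) \in \F_2^3$; a direct check of each of the seven nonzero triples will show that exactly five yield a rank $1$ matrix, namely those for which either $e=0$ and $(c,d) \neq (0,0)$, or $e=1$ and $c=0$. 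This gives the claimed count of five.

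For the second statement I would use the parametrization $M=xy^T$ of rank $1$ matrices, with $x,y \in \F_2^3 \setminus \{0\}$. Over $\F_2$ this representation is unique, since the only nonzero scalar is $1$. Fixing a generator $v$ of the line $D$, the condition $D \subset \Ker M$ becomes $y^Tv=0$, i.e.\ $y \in v^\bot \setminus \{0\}$, while the trace-zero condition $\tr M = y^Tx = 0$ becomes $x \in y^\bot \setminus \{0\}$. Since both $v^\bot$ and $y^\bot$ are linear hyperplanes of $\F_2^3$, each containing exactly three nonzero vectors, the total count is $3 \times 3 = 9>5$. The case of $\calM^D \cap \frak{sl}_3(\F_2)$ is entirely symmetric: the condition $\im M \subset D^\bot$ forces $x \in v^\bot \setminus \{0\}$, and trace zero still requires $y \in x^\bot \setminus \{0\}$, which again produces nine rank $1$ matrices.

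There is no substantial obstacle here beyond careful bookkeeping. The only points worth a moment's attention are the uniqueness of the factorization $xy^T$, which is clean over $\F_2$ but would acquire a correction factor over larger fields, and the consistency between the orthogonality used for the kernel of $M$ and that used for the trace; both refer to the canonical symmetric bilinear form $(X,Y)\mapsto X^TY$ fixed at the start of the paper, so the two conditions $y \in v^\bot$ and $x \in y^\bot$ are genuinely independent and multiply as claimed.
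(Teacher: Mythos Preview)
Your argument is correct. For the first assertion you follow essentially the paper's line: the paper remarks only that the claim is straightforward once one sees that a rank~$1$ element of $\calJ_3(\F_2)$ must have zero diagonal, and you carry out precisely that case analysis.

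For the second assertion your route differs from the paper's. The paper reduces without loss of generality to $D=\Vect(e_1)$, writes the elements of $\calM_D \cap \frak{sl}_3(\F_2)$ in block form $\begin{bmatrix} 0 & L \\ 0 & M \end{bmatrix}$ with $M \in \frak{sl}_2(\F_2)$, and simply exhibits six rank~$1$ matrices (three with $M=0$ and $L\neq 0$, three with $L=0$ and $M$ a nonzero nilpotent). Your approach instead counts all rank~$1$ matrices at once via the unique factorization $xy^T$ over $\F_2$, obtaining the exact value $9$ with no normalization of $D$. This is cleaner and more informative; the paper's version has the small advantage of being verifiable at a glance, since one only needs to see that the six displayed matrices really lie in the space and have rank~$1$. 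Either way the conclusion ``more than five'' is immediate.
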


\begin{lemme}\label{sl23rankpreserver}
The map $f$ is a rank preserver.
\end{lemme}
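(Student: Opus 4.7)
The plan is to exploit the classification of 5-dimensional singular subspaces of $\mathfrak{sl}_3(\F_2)$ together with Lemma~\ref{lastcountlemma} to construct $f$-invariants that distinguish matrices by rank. Since $\det$ takes only the values $0$ and $1$ over $\F_2$, the assumption that $f$ preserves non-singularity is equivalent to $\det\circ f=\det$; in particular $f$ permutes the 5-dim singular subspaces of $\mathfrak{sl}_3(\F_2)$. By Theorem~\ref{atklloydtheorem} and the lemma of this subsection ruling out subspaces equivalent to $\calR(1,1)$, each such subspace is either of the form $\mathfrak{sl}_3(\F_2)\cap\calM_D$ or $\mathfrak{sl}_3(\F_2)\cap\calM^D$ for some line $D\subset\F_2^3$ (yielding $14$ subspaces in total), or equivalent to $\calJ_3(\F_2)$.

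The primary $f$-invariant I would use is $\nu(M):=|\{V:V\text{ a 5-dim singular subspace of }\mathfrak{sl}_3(\F_2),\,M\in V\}|$. Because $\GL_3(\F_2)$-conjugation preserves both $\det$ and the family of 5-dim singular subspaces, $\nu$ is constant on each $\GL_3(\F_2)$-orbit. Using Lemma~\ref{lastcountlemma} together with an incidence count (computing the number of $\calJ_3(\F_2)$-equivalent 5-dim singular subspaces of $\mathfrak{sl}_3(\F_2)$ via the stabilizer of $\calJ_3(\F_2)$ in $\GL_3(\F_2)^2$), I would evaluate $\nu$ on the four non-trivial singular orbits of $\mathfrak{sl}_3(\F_2)$: rank $1$ (size $21$), rank $2$ nilpotent ($42$), rank $2$ semisimple with eigenvalues $0,1,1$ ($28$), and rank $2$ Jordan ($84$). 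The $\nu$-values on the nilpotent and Jordan rank-$2$ orbits turn out strictly smaller than that on rank $1$, forcing $f$ to preserve these two orbits.

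The main obstacle is that the rank-$1$ and rank-$2$ semisimple orbits yield the same value of $\nu$: a rank-$1$ matrix lies in $3$ subspaces of type $\calM_D$, $3$ of type $\calM^D$, and $5$ of type $\calJ_3(\F_2)$ (total $11$), whereas a rank-$2$ semisimple matrix lies in $1$ of each of the first two types and $9$ of the $\calJ_3(\F_2)$ type (also $11$). To separate these, I would refine $\nu$ by a secondary $f$-invariant such as $\xi(M):=|\{\{V_1,V_2\}:V_1\neq V_2,\,V_i\text{ is a 5-dim singular subspace containing }M,\,\dim(V_1\cap V_2)=2\}|$, equivalently the pairs whose span is all of $\mathfrak{sl}_3(\F_2)$. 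For a rank-$1$ matrix, the three $\calM_D$-type subspaces share the common intersection $\mathfrak{sl}_3(\F_2)\cap\calM_{\ker M}$ of dimension $2$, so these alone contribute $\binom{3}{2}=3$ pairs (and symmetrically $3$ more from the $\calM^D$-type side), whereas a rank-$2$ semisimple matrix lies in only one subspace of each of those two types. Precisely accounting for the contribution of the $\calJ_3(\F_2)$-equivalent subspaces to $\xi$ in both cases is the principal remaining computation.
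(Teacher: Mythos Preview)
Your incidence-geometry approach is genuinely different from the paper's, and your numerics are correct as far as they go: there are indeed $21$ subspaces of $\frak{sl}_3(\F_2)$ equivalent to $\calJ_3(\F_2)$, and double-counting against the orbit sizes gives $\nu=11,3,11,5$ on the rank-$1$, rank-$2$ nilpotent, rank-$2$ semisimple, and rank-$2$ Jordan orbits respectively. So $\nu$ isolates the two orbits with $\nu\in\{3,5\}$, and the only ambiguity is exactly the one you identify.

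The gap is real, however. You have not computed $\xi$, and the contribution of the $\calJ_3$-type subspaces is not a small bookkeeping step: for a rank-$1$ matrix there are five such subspaces (and their pairwise intersections with one another and with the six $\calM_D$/$\calM^D$-type subspaces must all be sorted out), while for a rank-$2$ semisimple matrix there are nine, giving $\binom{9}{2}=36$ internal pairs plus $18$ mixed pairs to analyse. Nothing guarantees in advance that these contributions do not coincidentally equalise the two values of $\xi$; until the computation is done, the argument is incomplete. If $\xi$ also failed, you would need yet another invariant, and there is no structural reason offered for why the hierarchy of such invariants must eventually separate the two orbits.

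The paper sidesteps all of this with an algebraic argument. Since $\tr A^3=\det A$ for $A\in\frak{sl}_3(\F_2)$, polarising the identity $\det f(A)=\det A$ yields
\[
\tr\bigl([f(A),f(B)]\,f(C)\bigr)=\tr\bigl([A,B]\,C\bigr)\qquad\text{for all }A,B,C\in\frak{sl}_3(\F_2),
\]
and since $(A,B)\mapsto\tr(AB)$ is non-degenerate on $\frak{sl}_3(\F_2)$ this forces $f\bigl(\calC(A)\cap\frak{sl}_3(\F_2)\bigr)=\calC(f(A))\cap\frak{sl}_3(\F_2)$. Hence $f$ preserves both the dimension of the centraliser and whether $\calC(A)\cap\frak{sl}_3(\F_2)$ is entirely singular. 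A short case check then shows that among $A\in\frak{sl}_3(\F_2)$, the conditions $\dim\calC(A)=5$ and ``$\calC(A)\cap\frak{sl}_3(\F_2)$ singular'' characterise exactly the rank-$1$ matrices; in particular the rank-$2$ semisimple matrices (which also have $\dim\calC(A)=5$) are excluded because their trace-zero centraliser contains a non-singular element. This gives rank preservation in a few lines, with no enumeration of $\calJ_3$-type subspaces at all.
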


Clearly, combining those lemmas yields Claim \ref{lastcompatclaim}, hence the rest of the proof from Section \ref{strongstart}
applies with no restriction.

\begin{proof}[Proof of Lemma \ref{lastcountlemma}]
The first claim is straightforward (notice that a matrix of $\calJ_3(\F_2)$ has rank $1$ only if its diagonal is zero). \\
For the second one, we lose no generality assuming that $D$ is spanned by the first vector of the canonical basis and
by only considering the case of $\calM_D \cap \frak{sl}_3(\F_2)$.
Then $\calM_D \cap \frak{sl}_3(\F_2)$ is the set of all matrices of the form
$\begin{bmatrix}
0 & L \\
0 & M
\end{bmatrix}$ with $L \in \Mat_{2,1}(\K)$ and $M \in \frak{sl}_2(\F_2)$. Taking $M=0$ and an arbitrary $L \neq 0$
yields three rank $1$ matrices, then taking $L=0$ and an arbitrary nilpotent matrix $M$ yields three others.
\end{proof}

\begin{proof}[Proof of Lemma \ref{sl23rankpreserver}]
We start by using the fact that $f$ is a determinant preserver on $\frak{sl}_3(\F_2)$.
The Newton formulae show that $\tr A^3=3\det A=\det A$ for every $A \in \frak{sl}_3(\F_2)$.
It follows that $f$ preserves the form $b(A,B)=\det(A+B)-\det(A)-\det(B)=\tr(A^2B)+\tr(B^2A)$.
Fixing $A$ and computing $b(A,B+C)-b(A,B)-b(A,C)$ for an arbitrary pair $(B,C)$, we deduce:
\begin{equation}\label{ternary}
\forall (A,B,C)\in \frak{sl}_3(\F_2)^3, \;
\tr\bigl([f(A),f(B)]f(C)\bigr)=\tr\bigl([A,B]C\bigr),
\end{equation}
where $[-,-]$ denotes the standard Lie bracket on $\Mat_3(\F_2)$.
For $A \in \Mat_3(\F_2)$, denote by $\calC(A):=\{M \in \Mat_3(\F_2) : \; [A,M]=0\}$ its centralizer
and set $\calC'(A):=\calC(A)\cap \frak{sl}_3(\F_2)$.
Notice then that identity \eqref{ternary} yields:
$$\forall A \in \frak{sl}_3(\F_2), \; f(\calC'(A))=\calC'(f(A)).$$
Indeed $\tr(I_3^2)=1$, hence the symmetric bilinear form $(A,B) \mapsto \tr(AB)$ is non-degenerate on the orthogonal $\frak{sl}_3(\F_2)$
of $\Vect(I_3)$.
However, since $I_3 \not\in \frak{sl}_3(\F_2)$, we may write $\calC(A)=\Vect(I_3)\oplus \calC'(A)$
for any $A \in \frak{sl}_3(\F_2)$, which yields:
$$\forall A \in \frak{sl}_3(\F_2), \; \dim \calC(A)=\dim \calC(f(A))$$
i.e.\ $f$ preserves the dimension of centralizers. It now suffices to prove that
$f$ preserves the set of rank $1$ matrices of $\frak{sl}_3(\F_2)$.
In order to do this, we characterize the rank $1$ matrices in $\frak{sl}_3(\F_2)$ in terms of their centralizer, in the next lemma.
\end{proof}

\begin{lemme}
Let $A \in \frak{sl}_3(\F_2)$. Then the following conditions are equivalent:
\begin{enumerate}[(i)]
\item $\rk A=1$;
\item $\dim \calC(A)=5$ and $\calC'(A) \cap \frak{sl}_3(\F_2)$ contains only singular matrices.
\end{enumerate}
\end{lemme}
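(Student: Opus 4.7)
The plan is to establish the two implications by a direct case analysis on $\rk A$. Both conditions (i) and (ii) are invariant under conjugation by $\GL_3(\F_2)$, which preserves $\frak{sl}_3(\F_2)$, the dimension of the centralizer, and singularity, so at each step I may replace $A$ by any convenient representative of its similarity class.

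For (i) $\Rightarrow$ (ii), assume $\rk A=1$ and write $A=xy^T$ with $x,y\in \F_2^3\setminus\{0\}$; the trace condition $y^T x=0$ forces $A^2=0$, so $A$ is similar to $E_{1,2}$. A direct computation of the matrices commuting with $E_{1,2}$ yields
$$\calC(E_{1,2})=\biggl\{\begin{bmatrix}\alpha & b & c \\ 0 & \alpha & 0 \\ 0 & d & e\end{bmatrix} \mid (\alpha,b,c,d,e)\in \F_2^5\biggr\},$$
which is $5$-dimensional. Since $\alpha+\alpha=0$ in $\F_2$, the condition $\tr M=0$ on such a matrix $M$ simply reads $e=0$; expansion of the determinant along the last row $\begin{bmatrix}0 & d & 0\end{bmatrix}$ then shows every element of $\calC'(E_{1,2})$ is singular.

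For (ii) $\Rightarrow$ (i), I exclude $\rk A\in\{0,2,3\}$. If $A=0$ then $\dim \calC(A)=9$; if $A\in \GL_3(\F_2)$ then $A\in \calC'(A)$ is itself non-singular. The decisive step is $\rk A=2$, which I treat by classifying trace-zero rank-$2$ matrices of $\Mat_3(\F_2)$ up to similarity. The characteristic polynomial of such an $A$ must take the form $t(t^2+c)$ with $c\in \F_2$, and because $t^2+1=(t+1)^2$ in $\F_2[t]$, the Frobenius normal form reveals exactly three similarity classes: a single $3\times 3$ nilpotent Jordan block; a diagonalizable matrix similar to $\operatorname{diag}(0,1,1)$ with minimal polynomial $t(t+1)$; and a non-derogatory matrix of minimal polynomial $t(t+1)^2$. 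In the first and third classes $A$ is non-derogatory, so $\dim \calC(A)=3\neq 5$. In the remaining class the centralizer $\calC(\operatorname{diag}(0,1,1))\simeq \F_2\oplus \Mat_2(\F_2)$ is indeed $5$-dimensional, but the matrix $\begin{bmatrix}1 & 0 & 0 \\ 0 & 0 & 1 \\ 0 & 1 & 1\end{bmatrix}$ lies in $\calC'(\operatorname{diag}(0,1,1))$ and is non-singular, so (ii) fails again.

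The main obstacle is this rank-$2$ analysis. The small cardinality of $\F_2$ makes generic-position arguments unavailable; both the enumeration of similarity classes, which relies on the $\F_2$-specific identity $t^2+1=(t+1)^2$, and the explicit construction of a non-singular element of the derogatory centralizer $\calC'(\operatorname{diag}(0,1,1))$, have to be carried out by hand.
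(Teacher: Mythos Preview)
Your proof is correct and follows the same overall plan as the paper: reduce by similarity, verify (i)$\Rightarrow$(ii) by computing the centralizer of a standard rank-$1$ nilpotent (you use $E_{1,2}$, the paper uses $E_{1,3}$), and for (ii)$\Rightarrow$(i) reduce to the single derogatory case $A\sim\operatorname{diag}(1,1,0)$ and exhibit an explicit non-singular element of $\calC'(A)$. The only substantive difference is how that last reduction is reached. The paper invokes the Frobenius formula for the dimension of a centralizer to see at once that $\dim\calC(A)=5$ forces $A$ to be a scalar plus a rank-$1$ matrix, and then $\tr A=0$ together with $\rk A\neq 1$ pins down $A=I_3+B$ with $\tr B=1$. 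You instead classify rank-$2$ trace-zero matrices over $\F_2$ directly via their characteristic polynomial $t(t^2+c)$ and rational canonical form, separating the non-derogatory classes (where $\dim\calC(A)=3$) from the derogatory one. Your route is a bit longer but entirely self-contained; the paper's is quicker but leans on an external structural result.
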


\begin{proof}
Assume $\rk A=1$. Then $A$ is nilpotent since $\tr A=0$, and we lose no generality assuming that $A$ is the elementary matrix $E_{1,3}$.
A straightforward computation then yields
$$\calC(A)=\Biggl\{\begin{bmatrix}
a & b & c \\
0 & d & e \\
0 & 0 & a
\end{bmatrix}\mid (a,b,c,d,e)\in \F_2^5\Biggr\} \quad \text{and} \quad
\calC'(A)=\Biggl\{\begin{bmatrix}
a & b & c \\
0 & 0 & e \\
0 & 0 & a
\end{bmatrix}\mid (a,b,c,e)\in \F_2^4\Biggr\}$$
hence $A$ satisfies condition (ii). \\
Conversely, assume condition (ii) holds and $\rk A \neq 1$.
The condition $\dim \calC(A)=5$ shows, using the Frobenius formula for the dimension of the centralizer (see Theorem 19 p.111 of \cite{Jacobson}),
that $A$ is a linear combination of $I_3$ and a rank $1$ matrix $B$.
However $\rk A \neq 1$ and $A \neq I_3$ hence $A=I_3+B$. We deduce that $\tr B=1$ hence we lose no generality assuming that
$A=\begin{bmatrix}
1 & 0 & 0 \\
0 & 1 & 0 \\
0 & 0 & 0
\end{bmatrix}$. Then the non-singular matrix
$\begin{bmatrix}
0 & 1 & 0 \\
1 & 1 & 0 \\
0 & 0 & 1
\end{bmatrix}$ commutes with $A$ and has trace $0$, which contradicts condition (ii).
\end{proof}

This finishes the proof of Lemma \ref{sl23rankpreserver} and shows that $f$
extends to a Frobenius automorphism of $\Mat_3(\F_2)$.

\subsection{Conclusion}

We may now sum up the previous results:

\begin{theo}\label{finaltheoF2}
Let $V$ be a linear hyperplane of $\Mat_3(\F_2)$ which is not equivalent to $\calV_1(\F_2)$,
and $f : V \hookrightarrow \Mat_3(\F_2)$ be a linear embedding such that
$\forall M \in V, \; f(M) \in \GL_3(\F_2) \Leftrightarrow M \in \GL_3(\F_2)$.
Then $f$ extends to a Frobenius automorphism.
\end{theo}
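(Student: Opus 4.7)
The plan is to assemble the pieces already developed in the subsections dedicated to $\calV_2(\F_2)$ and $\frak{sl}_3(\F_2)$ and reduce the general statement to one of these two canonical cases.

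First, I would observe that by Proposition \ref{linpresimpliqueequivalent}, the hypothesis that $f$ is a linear embedding with $f^{-1}(\GL_3(\F_2))=V\cap\GL_3(\F_2)$ forces $V$ and $f(V)$ to be equivalent hyperplanes of $\Mat_3(\F_2)$. Since the three orbits of hyperplanes under equivalence are represented by $\calV_1(\F_2)$, $\calV_2(\F_2)$, and $\frak{sl}_3(\F_2)$, and $V$ is assumed not equivalent to $\calV_1(\F_2)$, we may choose Frobenius automorphisms $u$ and $v$ of $\Mat_3(\F_2)$ such that $u(V)=f(V)$, say $V_0:=u(V)\in\{\calV_2(\F_2),\frak{sl}_3(\F_2)\}$, and such that $v\circ f\circ u^{-1}$ is an internal linear preserver of $V_0$, i.e.\ an endomorphism of $V_0$ preserving non-singularity. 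It therefore suffices to prove the conclusion for the two internal cases $V=\calV_2(\F_2)$ and $V=\frak{sl}_3(\F_2)$, since Frobenius automorphisms form a group.

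Next, for each of these two cases, the subsections above have shown that the analogue of Claim \ref{inverseimageclaim} is valid: in the $\calV_2(\F_2)$ case, Claims \ref{twokinds} and \ref{stabX} force $f$ to send each $\calV_2(\F_2)\cap\calM_D$ (resp.\ $\calV_2(\F_2)\cap\calM^D$) with $D\not\subset\Vect(e_1,e_2)$ to a subspace of the same type, after possibly replacing $f$ with $M\mapsto f(M)^T$; in the $\frak{sl}_3(\F_2)$ case, Lemmas \ref{lastcountlemma} and \ref{sl23rankpreserver} together show that $f^{-1}(\calM_D)$ and $f^{-1}(\calM^D)$ can never be equivalent to $\calJ_3(\F_2)$. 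In both situations, this is exactly what is needed in order to bypass the exceptional subspace $\calJ_3(\F_2)$ that caused the Atkinson--Lloyd classification to have four possibilities instead of three over $\F_2$.

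Once Claim \ref{inverseimageclaim} holds, I would simply invoke the remainder of the argument of Section \ref{strongstart}: Claims \ref{coherenceclaim1} and \ref{coherenceclaim2} provide the bijections $\varphi,\psi$ of $\Pgros(\F_2^3)$, the fundamental theorem of projective geometry (applicable since $\dim \F_2^3=3$) shows that $\varphi$ is induced by a linear isomorphism of $\F_2^3$ (no semilinear issue arises over $\F_2$), and Proposition \ref{improp} then finishes the proof by showing that the reduced map is of the form $M\mapsto MQ$ for some $Q\in\GL_3(\F_2)$. Composing back with the Frobenius automorphisms $u$ and $v$ removed at the beginning yields the desired extension of $f$ to a Frobenius automorphism of $\Mat_3(\F_2)$. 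The only nontrivial work is the verification of Claim \ref{inverseimageclaim} in the two remaining hyperplane cases, and this is precisely what has been carried out in the preceding two subsections, so the present theorem is merely the synthesis of those results.
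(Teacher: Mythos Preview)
Your proposal is correct and mirrors the paper's own argument: reduce via Proposition \ref{linpresimpliqueequivalent} to an internal automorphism of either $\calV_2(\F_2)$ or $\frak{sl}_3(\F_2)$, verify Claim \ref{inverseimageclaim} in each case using the results of the two dedicated subsections, and then run the machinery of Section \ref{strongstart} (Claims \ref{coherenceclaim1}--\ref{coherenceclaim2}, the projective argument, and Proposition \ref{improp}). One small slip: writing ``$u(V)=f(V)$, say $V_0:=u(V)\in\{\calV_2(\F_2),\frak{sl}_3(\F_2)\}$'' conflates two different reductions, since $f(V)$ need not itself be one of the canonical hyperplanes; what you want is a Frobenius automorphism $u$ with $u(V)=V_0$ canonical and a second one $v$ with $v(f(V))=V_0$, so that $v\circ f\circ u^{-1}$ is an endomorphism of $V_0$.
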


\section{The case of linear hyperplanes of $\Mat_2(\K)$}\label{hyper}

In this final section, we show that the result from Theorem \ref{strong}
still holds in the case $n=2$ and $V$ is a linear hyperplane of $\Mat_2(\K)$, and we also investigate
the question of weak preservers.
Using the same line of reasoning as in section \ref{redtointernalF2}, we see that, up to equivalence,
the only linear hyperplanes of $\Mat_2(\K)$ are
$T_2^+(\K)$ (the set of upper triangular matrices of $\Mat_2(\K)$)
and $\frak{sl}_2(\K)$. Let $f : V \hookrightarrow \Mat_2(\K)$ be a linear embedding such that
$f(V \cap \GL_2(\K))\subset \GL_2(\K)$. We lose no generality assuming that
both $V$ and $f(V)$ belong to $\bigl\{T_2^+(\K),\frak{sl}_2(\K)\bigr\}$.

\noindent Let us assume first that $V=f(V)$.

\begin{itemize}
\item If $V=\frak{sl}_2(\K)$, then $f$ is a linear automorphism of $\frak{sl}_2(\K)$ which (weakly) preserves nilpotency,
hence the Botta-Pierce-Watkins theorem \cite{BPW} (or classical results on projective conics) shows that
$f$ extends to $u_{\lambda\,P,P^{-1}}$ for some pair $(\lambda,P)\in \K^* \times \GL_2(\K)$.
\item If $V=T_2^+(\K)$, then a theorem of Chooi and Lim \cite{ChoiiLim} shows that $f$ extends to a
Frobenius automorphism of $\Mat_2(\K)$.
\end{itemize}

\noindent Assume now only that $f(V \cap \GL_2(\K))\subset \GL_2(\K)$.

\begin{itemize}
\item If $f^{-1}(\GL_2(\K))=V \cap \GL_2(\K)$, then
$V$ is equivalent to $f(V)$ since $T_2^+(\K)$ contains a $2$-dimensional singular subspace whereas $\frak{sl}_2(\K)$ does not.
For the same reason, $f(V)$ is equivalent to $V$ if $V=\frak{sl}_2(\K)$.
\end{itemize}

\noindent This proves the following results:

\begin{prop}
Let $V$ be a linear hyperplane of $\Mat_2(\K)$, and $f : V \hookrightarrow \Mat_2(\K)$
be a linear embedding such that $f^{-1}(\GL_2(\K))=V \cap \GL_2(\K)$.
Then $f$ extends to a Frobenius automorphism of $\Mat_2(\K)$.
\end{prop}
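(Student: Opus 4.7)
The plan is to follow the strategy sketched in the discussion preceding the statement. First, I would reduce to the situation where both $V$ and $f(V)$ coincide (not merely up to equivalence) with one of the two standard hyperplanes $T_2^+(\K)$ or $\frak{sl}_2(\K)$. Using the non-degenerate symmetric bilinear form $(A,B) \mapsto \tr(AB)$ on $\Mat_2(\K)$, every linear hyperplane of $\Mat_2(\K)$ is the orthogonal subspace of a line spanned by a matrix of rank $1$ or of rank $2$, and these two cases yield exactly the equivalence classes of $T_2^+(\K)$ and $\frak{sl}_2(\K)$. Pre- and post-composing $f$ with well-chosen Frobenius automorphisms thus reduces us to $V, f(V) \in \bigl\{T_2^+(\K), \frak{sl}_2(\K)\bigr\}$.

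Next, I would show that $V = f(V)$. The hyperplane $T_2^+(\K)$ contains the $2$-dimensional singular subspace $\calM^{\Vect(e_2)}$, whereas $\frak{sl}_2(\K)$ contains no $2$-dimensional singular subspace: every $2$-dimensional singular subspace of $\Mat_2(\K)$ is maximal, hence of the form $\calM_D$ or $\calM^D$ for some line $D$, and a direct inspection (valid in every characteristic) shows that each such subspace meets $\frak{sl}_2(\K)$ in a single line. Since $f$ is a strong preserver of non-singularity, both $f$ and $f^{-1}$ send $2$-dimensional singular subspaces to $2$-dimensional singular subspaces, so $V$ contains one if and only if $f(V)$ does. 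Hence $V$ and $f(V)$ lie in the same equivalence class, and after the reduction of the first step we obtain $V = f(V)$.

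It then remains to invoke two pre-existing theorems. If $V = f(V) = T_2^+(\K)$, the result of Chooi and Lim \cite{ChoiiLim} on non-singularity preservers of triangular matrices yields directly that $f$ extends to a Frobenius automorphism of $\Mat_2(\K)$. If $V = f(V) = \frak{sl}_2(\K)$, then every singular element of $V$ has characteristic polynomial $t^2$ and is therefore nilpotent, so $f$ is a linear automorphism of $\frak{sl}_2(\K)$ preserving nilpotency; the Botta-Pierce-Watkins theorem \cite{BPW} then provides $(\lambda, P) \in \K^* \times \GL_2(\K)$ with $f = u_{\lambda P, P^{-1}}$, which is again a Frobenius automorphism.

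The main obstacle is the second step. The dichotomy ``contains a $2$-dimensional singular subspace or not'' must be established uniformly over $\K$, and in particular the small characteristic case for $\frak{sl}_2(\K)$ (where $I_2$ itself lies in $\frak{sl}_2(\K)$ when $\K$ has characteristic $2$) requires one to check by hand that no $2$-dimensional subspace of trace-zero matrices is entirely singular. Once this structural separation is secured, the two cited results handle the rest with no further difficulty.
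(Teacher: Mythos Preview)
Your proposal is correct and follows essentially the same approach as the paper: reduce $V$ and $f(V)$ to the two standard hyperplanes, use the presence or absence of a $2$-dimensional singular subspace to force $V=f(V)$, and then invoke Chooi--Lim for $T_2^+(\K)$ and Botta--Pierce--Watkins for $\frak{sl}_2(\K)$. You add a little more justification than the paper does for the fact that $\frak{sl}_2(\K)$ contains no $2$-dimensional singular subspace (including the characteristic-$2$ check), but the overall argument is the same.
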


\begin{prop}\label{lastweakfirst}
Let $V$ be a linear hyperplane of $\Mat_2(\K)$ which is equivalent to $\frak{sl}_2(\K)$, and
$f : V \hookrightarrow \Mat_2(\K)$ be a linear embedding such that $f(V \cap \GL_2(\K))\subset \GL_2(\K)$.
Then $f$ extends to a Frobenius automorphism of $\Mat_2(\K)$.
\end{prop}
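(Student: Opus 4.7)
The plan is to reduce to the case $V=f(V)=\frak{sl}_2(\K)$ and then invoke the Botta--Pierce--Watkins theorem on nilpotency preservers of $\frak{sl}_2(\K)$. Since $V$ is equivalent to $\frak{sl}_2(\K)$ by assumption, precomposing $f$ with a suitable Frobenius automorphism lets me assume outright that $V=\frak{sl}_2(\K)$.

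The central step is to show that $f(V)$ is again equivalent to $\frak{sl}_2(\K)$, rather than to the other equivalence class of hyperplanes, $T_2^+(\K)$. The distinguishing property is that $T_2^+(\K)$ contains a $2$-dimensional singular subspace (for instance $\Vect(E_{1,1},E_{1,2})$), whereas $\frak{sl}_2(\K)$ contains none: the singular elements of $\frak{sl}_2(\K)$ form the zero locus of $\det$, which is an irreducible quadratic form, anisotropic on every $2$-plane of $\frak{sl}_2(\K)$ (over $\F_2$ this can be checked by listing the four singular elements and observing that they do not form a subspace). Now suppose for contradiction that $W\subset f(V)$ is a $2$-dimensional singular subspace. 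The weak preservation hypothesis, rephrased as ``$f(M)$ singular forces $M$ singular'', implies that $f^{-1}(W)$ is a $2$-dimensional singular subspace of $V=\frak{sl}_2(\K)$, a contradiction. Thus $f(V)\simeq \frak{sl}_2(\K)$, and a further postcomposition with a Frobenius automorphism reduces us to the case $f(V)=V=\frak{sl}_2(\K)$.

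At this point $f$ is a linear automorphism of $\frak{sl}_2(\K)$. Since a trace-zero $2\times 2$ matrix is singular if and only if it is nilpotent (its two eigenvalues sum to zero and have product zero), the weak preservation of non-singularity becomes a weak preservation of nilpotency, and the Botta--Pierce--Watkins theorem \cite{BPW} forces $f$ to be the restriction of $u_{\lambda P,P^{-1}}$ for some $(\lambda,P)\in \K^*\times \GL_2(\K)$, which is a Frobenius automorphism of $\Mat_2(\K)$. The main obstacle I foresee lies in the second step: the absence of $2$-dimensional singular subspaces in $\frak{sl}_2(\K)$ must be verified uniformly in $\K$, in particular over $\F_2$ where the form $\det$ is defective, but the remainder of the argument is essentially bookkeeping plus an appeal to a known conic-preserver theorem.
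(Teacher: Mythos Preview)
Your proof is correct and follows essentially the same route as the paper: reduce to $V=\frak{sl}_2(\K)$, use the absence of a $2$-dimensional singular subspace in $\frak{sl}_2(\K)$ (versus its presence in $T_2^+(\K)$) to force $f(V)$ to be equivalent to $\frak{sl}_2(\K)$, and then invoke Botta--Pierce--Watkins. One terminological quibble: the restriction of $\det$ to a $2$-plane of $\frak{sl}_2(\K)$ need not be \emph{anisotropic} (e.g.\ the plane $\Vect(E_{1,2},\,E_{1,1}-E_{2,2})$ contains the isotropic vector $E_{1,2}$); what you actually need, and what irreducibility of the conic gives, is that $\det$ does not vanish \emph{identically} on any $2$-plane.
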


Let us finally examine whether there exists a
linear bijective map $f : T_2^+(\K) \rightarrow \frak{sl}_2(\K)$ which
maps non-singular matrices to non-singular matrices. Assume such a map exists. Then
$f^{-1}$ maps any singular matrix of $\frak{sl}_2(\K)$ to a singular matrix.
Denote by $u$ the projective isomorphism from $\Pgros\bigl(\frak{sl}_2(\K)\bigr)$ to
$\Pgros(T_2^+(\K))$ associated to $f^{-1}$. The set of rank $1$ matrices of $\frak{sl}_2(\K)$ yields a non-degenerate
projective conic $\calC$ of $\Pgros\bigl(\frak{sl}_2(\K)\bigr)$ and $u$ maps $\calC$ into
the union of two distinct projective lines $\calD_1$ and $\calD_2$ of $\Pgros(T_2^+(\K))$.
This is impossible if $\# \K \geq 4$ since a non-degenerate projective conic has at most $2$ common points with every line
and $\# \calC=\# \K+1$.
However, if $\# \K \leq 3$, then $\calC$ has at most $4$ points, hence we may find two distinct projective lines $\calD'_1$ and $\calD'_2$
such that $\calC \subset \calD'_1 \cup \calD'_2$: we may then choose a projective transformation $v : \Pgros(\frak{sl}_2(\K)) \rightarrow
\Pgros(T_2^+(\K))$ which maps respectively $\calD'_1$ to $\calD_1$ and $\calD'_2$ to $\calD_2$.
Given a linear map $g : \frak{sl}_2(\K) \rightarrow T_2^+(\K)$ associated to $u$,
we then find that $g^{-1}$ is a weak linear preserver of non-singularity but does not extend to a Frobenius automorphism of $\Mat_2(\K)$.
We may now generalize Proposition \ref{lastweakfirst} as follows:

\begin{prop}\label{lastweakrenforce}
Let $V$ be a linear hyperplane of $\Mat_2(\K)$ and $f : V \hookrightarrow \Mat_2(\K)$ be a linear embedding such that $f(V \cap \GL_2(\K))\subset \GL_2(\K)$.
Then $f$ extends to a Frobenius automorphism of $\Mat_2(\K)$
unless $V$ is equivalent to $T_2^+(\K)$ and $\# \K \leq 3$.
\end{prop}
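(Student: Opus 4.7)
The plan is to use the equivalence reduction to narrow the situation to a short list of cases and then handle each one by a result already invoked in this section or by a direct projective-geometric count. By the orbit description at the start of Section \ref{hyper}, every linear hyperplane of $\Mat_2(\K)$ is equivalent to $T_2^+(\K)$ or $\frak{sl}_2(\K)$, so up to pre- and post-composition with Frobenius automorphisms I may assume that $V$ and $f(V)$ both lie in $\bigl\{T_2^+(\K),\frak{sl}_2(\K)\bigr\}$. When $V \simeq \frak{sl}_2(\K)$ the conclusion is immediate from Proposition \ref{lastweakfirst}, and when $V = f(V) = T_2^+(\K)$ the Chooi-Lim theorem recalled earlier in the section finishes the job. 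The only remaining case is $V = T_2^+(\K)$ with $f(V) = \frak{sl}_2(\K)$, and there I must show that this forces $\#\K \leq 3$.

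For this, I would first observe that the linear bijection $f^{-1} : \frak{sl}_2(\K) \to T_2^+(\K)$ sends singular matrices to singular matrices: if some singular $A \in \frak{sl}_2(\K)$ had a non-singular preimage $B := f^{-1}(A)$, then the hypothesis would force $A = f(B)$ to be non-singular, a contradiction. Passing to projective space, $f^{-1}$ induces a projective isomorphism $u : \Pgros\bigl(\frak{sl}_2(\K)\bigr) \to \Pgros\bigl(T_2^+(\K)\bigr)$. The singular locus of $\frak{sl}_2(\K)$ is the non-degenerate projective conic $\calC = \bigl\{a^2+bc = 0\bigr\}$ (writing a traceless matrix as $\begin{bmatrix} a & b \\ c & -a \end{bmatrix}$), while the singular locus of $T_2^+(\K)$ is the union $\calD_1 \cup \calD_2$ of the two distinct projective lines cut out by the vanishing of each diagonal entry. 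The previous remark forces $u(\calC) \subset \calD_1 \cup \calD_2$.

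Since any projective line meets a non-degenerate projective conic in at most two points, $u(\calC)$ contains at most four points; since $\#\calC = \#\K + 1$, this yields $\#\K \leq 3$. Therefore, whenever $\#\K \geq 4$ the case $f(V) = \frak{sl}_2(\K)$ is vacuous, so $f(V) = T_2^+(\K)$ and the Chooi-Lim theorem applies to conclude. The main obstacle here is not computational but conceptual: one has to notice that the one-sided preservation hypothesis is already enough to force $f^{-1}$ to send singular to singular, which is the exact input needed to turn the question into a classical conic-versus-two-lines incidence count.
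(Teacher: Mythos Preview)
Your proof is correct and follows essentially the same approach as the paper: the same reduction to the two canonical hyperplanes, the same appeal to Proposition \ref{lastweakfirst} and Chooi--Lim for the ``diagonal'' cases, and the identical projective argument (non-degenerate conic mapped into a union of two lines, hence $\#\K+1 \leq 4$) to rule out $V=T_2^+(\K)$, $f(V)=\frak{sl}_2(\K)$ when $\#\K\geq 4$.
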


We conclude by summing up the previous results in the case of a linear hyperplane of $\Mat_n(\K)$
(the case $n=1$ being trivial).

\begin{theo}\label{hyperplansynthese}
Let $V$ be a linear hyperplane of $\Mat_n(\K)$, and $f : V \rightarrow V$
be a linear automorphism such that $f(V \cap \GL_n(\K)) \subset \GL_n(\K)$. \\
Then $f$ extends to a Frobenius automorphism unless $n=3$, $\K \simeq \F_2$
and $V$ is equivalent to $\calV_1(\F_2)$.
\end{theo}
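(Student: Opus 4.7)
The plan is to piece together the results already established in the paper via a case analysis on $n$. I would first dispose of the trivial case $n=1$, where the only hyperplane of $\Mat_1(\K)$ is $\{0\}$ and any extension (say the identity) works. The substantive split is then $n=2$ versus $n \geq 3$.

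For $n \geq 3$ we have $\codim V = 1 < n-1$, so Theorem \ref{weakgeneral} applies directly and yields the conclusion unless $n=3$ and $\K \simeq \F_2$. In this remaining exceptional case I would use finiteness of $\F_2$: the set $V \cap \GL_3(\F_2)$ is finite, and since $f$ is a linear automorphism of $V$ sending this finite set into itself, it restricts to an injection of $V \cap \GL_3(\F_2)$ into itself, hence to a bijection. Consequently $f^{-1}(\GL_3(\F_2)) = V \cap \GL_3(\F_2)$, i.e.\ the weak preservation hypothesis upgrades automatically to strong preservation. Theorem \ref{finaltheoF2} then delivers a Frobenius extension provided $V$ is not equivalent to $\calV_1(\F_2)$; the latter case is precisely the stated exception.

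For $n=2$ we have $\codim V = 1 = n-1$, which falls outside the range of Theorem \ref{weakgeneral}, but Section \ref{hyper} has already treated this situation. Up to equivalence, $V$ is either $T_2^+(\K)$ or $\frak{sl}_2(\K)$. The key observation here is that the hypothesis $f:V\to V$ forces $f(V)=V$; hence the $\#\K\leq 3$ obstruction appearing in Proposition \ref{lastweakrenforce} cannot arise, because that obstruction was realized by a map between two \emph{non-equivalent} hyperplanes $\frak{sl}_2(\K) \to T_2^+(\K)$. With $f(V)=V$ one invokes the Botta-Pierce-Watkins theorem in the $\frak{sl}_2(\K)$ case (as in the first bullet of Section \ref{hyper}) and the Chooi-Lim theorem in the $T_2^+(\K)$ case, and $f$ extends to a Frobenius automorphism either way.

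There is no real obstacle in any of these steps — all the substantive work has been done in Theorems \ref{weakgeneral} and \ref{finaltheoF2} and in Section \ref{hyper}. The only two subtle points to highlight in the write-up are (i) the weak-to-strong passage in the finite-field case, which is immediate from the pigeonhole principle, and (ii) the fact that the hypothesis $f:V\to V$ (as opposed to an arbitrary embedding into $\Mat_2(\K)$) is exactly what prevents the Proposition \ref{lastweakrenforce} exception from leaking into Theorem \ref{hyperplansynthese}.
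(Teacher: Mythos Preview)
Your proposal is correct and matches the paper's own approach: the theorem is stated without an explicit proof, the paper simply declaring that it ``sums up the previous results'' (with $n=1$ trivial), and your case split on $n$ together with the invocations of Theorem \ref{weakgeneral}, Theorem \ref{finaltheoF2}, and the bulleted results of Section \ref{hyper} is exactly how this summary is meant to be read. Your two highlighted subtleties --- the pigeonhole weak-to-strong upgrade over $\F_2$ (already noted in Section \ref{weakproof}) and the fact that $f(V)=V$ puts us in the first bulleted case of Section \ref{hyper} rather than the cross-hyperplane situation of Proposition \ref{lastweakrenforce} --- are precisely the points one needs to check.
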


\begin{theo}\label{sllasttheo}
Let $f : \frak{sl}_n(\K) \rightarrow \frak{sl}_n(\K)$
be a linear automorphism such that $f(\frak{sl}_n(\K) \cap \GL_n(\K)) \subset \GL_n(\K)$.
Then there exists $P \in \GL_n(\K)$ and a non-zero scalar $\lambda$ such that
$$\forall M \in \frak{sl}_n(\K), \; f(M)=\lambda\,PMP^{-1} \quad \text{or}
\quad \forall M \in \frak{sl}_n(\K), \; f(M)=\lambda\,PM^T\,P^{-1}.$$
\end{theo}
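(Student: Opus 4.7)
The plan is to deduce Theorem \ref{sllasttheo} from the hyperplane results already assembled, then to refine the conclusion using the fact that $f$ stabilises $\frak{sl}_n(\K)$. Recall that $\frak{sl}_n(\K)$ is a linear hyperplane of $\Mat_n(\K)$, and that the line of its orthogonal subspace (for the form $(A,B) \mapsto \tr(AB)$) is spanned by $I_n$, which has rank $n$; in particular, when $n=3$ and $\K \simeq \F_2$, this hyperplane is not equivalent to $\calV_1(\F_2)$. Hence $\frak{sl}_n(\K)$ falls within the scope of Theorem \ref{hyperplansynthese} for every $n \geq 2$ (with the trivial case $n=1$ set aside), so $f$ automatically extends to a Frobenius automorphism of $\Mat_n(\K)$.

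The first step is therefore to apply Theorem \ref{hyperplansynthese} to obtain a pair $(P,Q)\in \GL_n(\K)^2$ such that either
\[
\forall M \in \frak{sl}_n(\K), \; f(M)=PMQ, \quad \text{or} \quad \forall M \in \frak{sl}_n(\K), \; f(M)=PM^TQ.
\]
Since this invokes the hyperplane synthesis, I will briefly check that for $n \geq 3$ the general case goes through Theorem \ref{weakgeneral} (noting $\codim \frak{sl}_n(\K)=1 < n-1$ when $n \geq 3$) except for $n=3,\,\K \simeq \F_2$, and that this exceptional case is precisely covered by Theorem \ref{finaltheoF2} since $\frak{sl}_3(\F_2)$ is equivalent to neither $\calV_1(\F_2)$; for $n=2$, Proposition \ref{lastweakfirst} applies since $\frak{sl}_2(\K)$ is its own equivalence class among hyperplanes of $\Mat_2(\K)$.

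The second step is to extract the scalar $\lambda$ from the stability condition $f(\frak{sl}_n(\K)) \subset \frak{sl}_n(\K)$. In the case $f(M)=PMQ$, this stability reads $\tr(PMQ)=\tr(M\,QP)=0$ for every traceless $M$, which forces $QP$ to lie in the orthogonal of $\frak{sl}_n(\K)$ for the bilinear form $(A,B) \mapsto \tr(AB)$; as this orthogonal is $\K\,I_n$, we get $QP=\lambda\,I_n$ for some $\lambda \in \K$, and $\lambda \neq 0$ since $P$ and $Q$ are non-singular. Thus $Q=\lambda\,P^{-1}$ and $f(M)=\lambda\,P\,M\,P^{-1}$. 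In the transpose case $f(M)=PM^TQ$, the identity $\tr(PM^TQ)=\tr((QP)^T M)$ gives analogously $QP=\lambda I_n$, hence $f(M)=\lambda\,P\,M^T\,P^{-1}$.

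There is no genuine obstacle here: the work was done in assembling Theorems \ref{weakgeneral}, \ref{finaltheoF2} and \ref{hyperplansynthese}, and the only computation left is the elementary trace argument that pins down the second Frobenius parameter as $\lambda P^{-1}$. The one point that must not be skipped is the verification that $\frak{sl}_n(\K)$ avoids the exceptional $\calV_1(\F_2)$-orbit for $n=3$, $\K=\F_2$, which is immediate from inspecting the rank of the generator $I_n$ of the orthogonal line.
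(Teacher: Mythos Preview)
Your proof is correct and follows essentially the same approach as the paper: apply Theorem \ref{hyperplansynthese} to extend $f$ to a Frobenius automorphism, then use the trace identity $\tr(PMQ)=\tr(QPM)=0$ for all $M\in\frak{sl}_n(\K)$ to force $QP=\lambda I_n$. The paper handles the transpose case slightly more concisely by noting that $\frak{sl}_n(\K)$ is stable under transposition (so it suffices to treat $u_{P,Q}$), whereas you compute it directly via $\tr(PM^TQ)=\tr((QP)^T M)$; both are equally valid.
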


Note that we find exactly the linear preservers of nilpotency (the common ground being the case $n=2$, as we have just seen)!

\begin{proof}[Proof of Theorem \ref{sllasttheo}]
Using Theorem \ref{hyperplansynthese}, it suffices to show that a Frobenius automorphism which stabilizes
$\frak{sl}_n(\K)$ must be of the aforementioned form. Since $\frak{sl}_n(\K)$ is stable under transposition,
it suffices to fix an arbitrary $(P,Q)\in \GL_n(\K)^2$ such that $u_{P,Q}$ stabilizes $\frak{sl}_n(\K)$
and prove that $Q$ is a scalar multiple of $P^{-1}$. However, for every $M \in \frak{sl}_n(\K)$, one has
$\tr(QPM)=\tr(PMQ)=0$ hence $QP$ is a scalar multiple of $I_n$, which proves our claim.
\end{proof}

\end{document}